\newcommand\smallbullet{%
    \raisebox{-0.20ex}{\scalebox{1.8}{$\cdot$}}%
}
\def\G{\Gamma}
\def\cal#1{\mathcal{#1}}
\newtheorem{theorem}{Theorem}[section]
\newtheorem{corollary}[theorem]{Corollary}
\newtheorem{remark}[theorem]{Remark}
\newtheorem{definition}[theorem]{Definition}
\newtheorem{example}[theorem]{Example}
\newcommand{\mg}{\marginpar}
\newcommand{\Addresses}{{
  \bigskip
  \footnotesize
    $^{1}$ $^{2}$  University ''Alexandru Ioan Cuza'', Faculty of Mathematics,
  Bd. Carol I, No. 11, Ia\c{s}i, 700506, ROMANIA,
  email:   croitoru@uaic.ro, gavrilut@uaic.ro
\\
    	$^{3}$  Petroleum-Gas University of Ploie\c{s}ti, Department of Computer Science,
      Information Technology, Mathematics and Physics,
         Bd. Bucure\c{s}ti, No. 39, Ploie\c{s}ti 100680, ROMANIA\\
          email: emilia.iosif@upg-ploiesti.ro
\\
$^{4}$   Department of Mathematics and Computer Sciences,
     University of  Perugia -- 1, Via Vanvitelli - 06123, Perugia, ITALY,
     email: anna.sambucini@unipg.it  	
}}
\title[A survey  on the Riemann-Lebesgue integrability in non-additive ...]{
A survey  on the Riemann-Lebesgue integrability in non-additive setting}
 \subjclass[2020]{28B20, 28C15, 49J53}
 \keywords{Riemann-Lebesgue integral, inequalities, convergence of integral,
Interval-valued (set) multifunction, Non-additive set function.}
\author{ Anca Croitoru$^{1}$,  Alina Gavrilu\c{t}$^{2}$, Alina Iosif$^{3}$,  and Anna Rita Sambucini$^{4}$}
\begin{document}
\begin{abstract}
We present some results regarding Riemann-Lebesgue integral of a vector(real resp.) function relative to
an arbitrary non-additive set function. Then these results
are generalized to the case of Riemann–Lebesgue integrable interval-valued multifunctions.
\end{abstract}
%===============================
%===============================
\date{\today}

%=======================

\maketitle

\tableofcontents

\section{Introduction}
The theory of non-additive set functions and nonlinear integrals has become an important tool
in many domains such as: potential theory, subjective
evaluation, optimization, economics, decision making, data mining, artificial intelligence, accident rates estimations
(e.g. \cite[]{CCS,CMP,Gal,GZ,MM,Milne,Pap1,Pat,sh,SC,book,torra,W}).
In the literature several methods of integration for (multi)functions
based on extensions of the Riemann and Lebesgue integrals have been introduced and studied (see for example, \cite[]{Bir,BM,bcs2014,bmis,BS1,BS,bs2023,CCGS,CDPMS1,CDPMS2,CS1,cr2005,CR,CG,DM,dp-p,dmm16,PiMMS,dp2,PMS,FMNR,F,GP,Ga,Gav,GIC,G,M,Me,Pap,PGC}). In this context,
Kadets and Tseytlin \cite{K} have introduced the
absolute Riemann-Lebesgue $|RL|$ and unconditional Riemann-Lebesgue RL integrability, for Banach
valued functions with respect to countably additive measures. According to \cite{K}, in finite measure
spaces, the Bochner integrability implies $|RL|$ integrability, which is stronger than RL integrability, that
implies Pettis integrability. Contributions in this area are given in
\cite[]{ccgs16,cmn,cr2005,danilo,cgis2022,cgis2021,PMS,K2002,P,Po,Po1,PC,PS1,R1,R2,R3,flores,Sam,Si,SB,Spa,Yin}.

Interval Analysis, as particular case of Set-Valued Analysis, was introduced by Moore \cite[]{moore66},
motivated by its applications in computational mathematics (i.e. numerical analysis).
The interval-valued multifunctions and multimeasures are involved in various applied sciences,
as statistics, biology, theory of games, economics, social sciences and
software. Also they are used for example in signal and image processing,
since the discretization of a continuous signal causes
different sources of uncertainty and ambiguity, as we can see in
\cite{CSV3,CSV2,mesiar,TMV1,lopez,V1,WD1}.

In this chapter, we study the Riemann-Lebesgue integral with respect
to an arbitrary set function, not necessarily  countably additive.
We present some classical  properties of the integral together with
some relationships between this Riemann-Lebesgue integral and some other integrals known in the literature as the
Birkhoff simple and the Gould integrals. We describe also some convergence theorems (e.g. Lebesgue type convergence theorems, Fatou type theorem) for sequences of Riemann-Lebesgue integrable
functions. Then these results are extended to the case of Riemann-Lebesgue integrable
interval-valued multifunctions.\\
    The chapter is organized as follows. In section 2 we present some preliminaries which are necessary in what follows.
In section 3 we
introduce the Riemann Lebesgue integral and we present
different properties for the RL integral of a real function with respect to a non-additive set function. Also we establish some comparative results among this Riemann-Lebesgue integral, the Birkhoff simple and the Gould integrals. Then some convergence theorems for sequences of Riemann-Lebesgue integrable are pointed out.  Section 3 ends  with  a H\"{o}lder, Minkowski type  inequalities for  Riemann-Lebesgue integrable functions.
The results of Section 3 are then applied in  Section 4 to the case of
 interval-valued functions and set functions. 
We highlight that
 we present  some generalizations of the Riemann-Lebesgue integral in two directions: the non-additive setting and the interval-valued setting. Some difficulties arise in these approaches: -the techniques of the classical measure theory can no longer be used; -the set spaces are more difficult to use.
The motivation of it is mainly due the fact that, though finite or countable additivity is a fundamental concept in measure theory, it can be useless in some modeling problems of decision making, data mining, economy, computer science, game theory, subjective evaluation, fuzzy logic
as shown in \cite{CMP,danilo,mesiar,TMV1,LMP,LY,lopez,Pap1,PS,book,torra}. 
% For example, the efficiency of a number of persons doing some work together is not equal to the sum of the efficiencies of each person doing his own fragment of the work.\\

%=============================================
\section{Definitions and basic facts}
Suppose $S$ is a nonempty set and $\mathcal{C}$ a $\sigma$-algebra of subsets of $S$, while
$\mathcal{P}(S)$ denotes the family of all subsets of $S$.
For every $E\subset S$, as usual, let $E^c=S\setminus E$ and let $\chi_{E}$ be the characteristic function of $E$.

Following \mbox{\rm (\cite[Definition 1]{ccgis})}
A finite (countable, resp.) partition of $S$ is a finite
(countable, resp.) family of nonempty sets $P=\{E_{i}\}_{i=1}^{n}$ ($\{E_{n}\}_{n\in \mathbb{N}}$, resp.) $\subset \mathcal{C}$
such that $E_{i}\cap E_{j}=\emptyset ,i\neq j$ and 
$\bigcup
\limits_{i=1}^{n}E_{i}= S$ ($\bigcup\limits_{n\in \mathbb{N}}E_{n}= S$,
resp.).
\begin{itemize}
\item  If $P$ and $P^{\prime }$ are two
partitions of $S$, then $P^{\prime }$ is said to be \textit{finer than} $P$,
\begin{eqnarray}\label{minore-ouguale}
\phantom{a} \,\,\,\,  P\leq P^{\prime } \,\, \mbox{ (or  } P^{\prime }\geq P \mbox{), if every set of }
P^{\prime } \mbox{   is included in some set of    } P.
\end{eqnarray}
\item  The \textit{common refinement} of two  finite or countable
partitions $P=\{E_{i}\}$ and $P^{\prime }=\{G_{j}\}$ is the partition $
P\wedge P^{\prime }:=\{E_{i}\cap G_{j}\}$.

\item   A countable  tagged partition of $S$ if a family $\{(E_{n},  s_{n}), n \in \mathbb{N}\}$
such that $(E_{n})_{n}$ is a countable partition of $S$ and $s_{n} \in E_{n}$ for every $n \in \mathbb{N}$.
\end{itemize}

All over this chapter, without any additional assumptions,
$\nu :\mathcal{C}\rightarrow [0, \infty)$ will be a set function, such that $\nu (\emptyset )=0.$

 As in \cite[Definitions 2 and 3]{ccgis},
$\nu :\mathcal{C}\rightarrow [0, \infty) $ is said to be:
\begin{itemize}
\item
 \textit{monotone} if $\nu (A)\leq \nu (B)$, for every  $ A,B\in\mathcal{C}$,
with $A\subseteq B$ (such non additive measures are called also capacities or fuzzy measures);
\item \textit{\ subadditive} if $\nu (A\cup B)\leq \nu (A)+ \nu (B),$
for every $A,B\in \mathcal{C}$, with $A\cap B=\emptyset ;$

\item  \textit{a submeasure } (in the sense of Drewnowski \cite{Dr})
if $\nu $ is monotone and subadditive;

\item $\sigma $\textit{-subadditive} if $\nu (A)\leq
\sum\limits_{n=0}^{+\infty }\nu (A_{n}),$ for every sequence of (pairwise
disjoint) sets\textit{\ }$(A_{n})_{n\in \mathbb{N}}\subset $ \textit{$%
\mathcal{C}$, }with $A=\bigcup\limits_{n=0}^{+\infty }A_{n}$.

\item \textit{finitely additive} if $\nu (A\cup B)= \nu(A)+ \nu(B),$
for every disjoint sets $A,B\in \mathcal{C};$
\item   \textit{$\sigma $-additive} if $\nu
(\bigcup\limits_{n=0}^{+\infty }A_{n})=\sum\limits_{n=0}^{+\infty} \nu (A_{n})$%
, for every sequence of pairwise disjoint sets $(A_{n})_{n\in \mathbb{N}%
}\subset \mathcal{C}$;

\item    \textit{order-continuous} (shortly, \textit{o-continuous}) if $%
\lim\limits_{n\rightarrow +\infty }\nu (A_{n})=0,$ for every decreasing
sequence of sets $(A_{n})_{n\in \mathbb{N}}\subset \mathcal{C}$, with
$\bigcap\limits_{n=0}^{+\infty}A_{n}= \emptyset$ (denoted by
$A_{n}\searrow \emptyset $);

\item    \textit{exhaustive }if $\lim\limits_{n\rightarrow +\infty }\nu
(A_{n})=0,$ for every sequence of pairwise disjoint sets $(A_{n})_{n\in
\mathbb{N}}\subset \mathcal{C}.$

\item
 {\it null-additive} if, for every $A, B \in \mathcal{C}$, $\nu(A \cup B) = \nu(A)$ when $\nu(B) = 0$.
\end{itemize}

Moreover
a set function  $\nu :\mathcal{C}\rightarrow [0, \infty)$ satisfies:
\begin{description}
\item[($\boldsymbol{\sigma}$)]
 the property $\boldsymbol{\sigma}$ if for every $\{E_{n}\}_{n}\subset \mathcal{C}$
with $\nu(E_{n})=0,$ for every $ n\in \mathbb{N}$ we have $\nu(\cup_{n=0}^{\infty}E_{n})= 0;$
\item[(E)] the condition {\bf (E)} if for every double sequence
$(B_{n}^{m})_{n, m\in \mathbb{N}^{*}}\subset \mathcal{C}$,
such that for every $m\in \mathbb{N}^{*}$, $B_{n}^{m}\searrow B^{m}\,  (n\rightarrow \infty)$ and
 $\, \nu(\cup_{m=1}^{\infty}B^{m})=0$,
there exist two increasing sequences $(n_{p})_p, (m_p)_p \subset \mathbb{N}$ such that
$\lim\limits_{k\rightarrow \infty}\nu(\bigcup_{p=k}^{\infty} \, B_{n_{p}}^{m_{p}})=0.$
\end{description}

The property $\boldsymbol{\sigma}$ is a consequence of the countable subadditivity and it will be needed in some of our results.
Observe that the condition {\bf (E)} was given, for example, in \cite{LY}, in order to give sufficient and necessary
conditions to obtain Egoroff's Theorem for suitable non additive measures. See also \cite{Pap2} for null additive set functions and related questions.
An example of a set function that satisfies the condition {\bf (E)} can be found in \cite[Example 3.3]{LY}.\\

%As in  (\cite[Definitions 2 and 3]{ccgis}
A property $(P)$ holds $\nu$-almost everywhere (denoted by
$\nu$-a.e.) if there exists $E \in \mathcal{C}$, with $\nu(E) = 0$, so that the property $(P)$ is valid on $S\setminus E.$
 A set $A\in \mathcal{C}$ is said to be an atom of a set function $\nu:\mathcal{C}\to [0, \infty)$ if
 $\nu(A)>0$ and for every $B\in \mathcal{C}$, with $B\subseteq A$, we have $\nu(B) = 0$ or $\nu(A\backslash B) = 0.$\\

We associate to $\nu:S\to [0, \infty)$ the following set functions.
\mbox{\rm (See \cite[Definition 4]{ccgis})}
\begin{itemize}
\item  The variation $\overline{\nu }$ of $\nu$ is the set
function $\overline{\nu }:\mathcal{P}(S)\rightarrow \lbrack 0,+\infty ]$
defined by
\[\overline{\nu }(E)=\sup \{\sum\limits_{i=1}^{n}\|\nu (A_{i})\|\},\]
for every $E\in \mathcal{P}(S)$, where the supremum is extended over all
finite families of pairwise disjoint sets $\{A_{i}\}_{i=1}^{n}\subset
\mathcal{C}$, with $A_{i}\subseteq E$, for every $ i\in\{1, \ldots, n\}$.
The set function  $\nu $ is said to be \textit{of finite variation}
(on $\mathcal{C}$) if $\overline{\nu }(S)<+\infty$.

\item
the semivariation
$\widetilde{\nu}$ of $\nu$ is the set function $:\mathcal{P}(S)\rightarrow \lbrack
0,+\infty ]$   defined for every $A\subseteq S$, by
\[
\widetilde{\nu }(A)=\inf \{\overline{\nu }(B);\,\, A\subseteq B,\,\, B\in \mathcal{C}
\}.\]
\end{itemize}

\begin{remark}\label{rem-var}
\rm
\mbox{\rm (\cite[Remark 1]{cgis2022})}
Let $\nu:\cal{C}\rightarrow [0,+\infty)$ be a non additive measure. Then
 $\overline{\nu}$ is monotone and super-additive on $\mathcal{P}%
(S)$, that is
\[\overline{\nu}(\bigcup_{i\in I}A_{i})\geq
\sum_{i\in I}\overline{\nu}(A_{i}),\]
 for every finite or countable partition $\{A_{i}\}_{i\in I}$ of $S$.
 If $\nu$ is finitely additive, then $\overline{\nu}(A)= \nu(A),$
for every $A\in \mathcal{C}.$
 If $\nu$ is subadditive ($\sigma$-subadditive,
resp.), then $\overline{\nu}$ is finitely additive ($\sigma $-additive, resp.).
Moreover, for every $\nu, \, \nu_1, \, \nu_2:\mathcal{C}\rightarrow \mathbb{R}$ and every $\alpha \in \mathbb{R}$,
\begin{itemize}
\item $  \overline{\nu_1 \pm \nu_2} \leq \overline{\nu_1} + \overline{\nu_2}$;
\qquad \quad   $\overline{\alpha \,\nu} = |\alpha| \overline{\nu}$.
\end{itemize}
\end{remark}

For all unexplained definitions, see for example \cite{CCGS,CCG2}.\\

Let $\mathscr{M}(S)$ be the set of all non negative submeasures on $(S, \mathcal{C})$.
Let $(\nu_n)_n \subset \mathscr{M}(S)$,
we will use the symbol $\nu_n \uparrow$ to indicate that $\nu_n \leq \nu_{n+1}$ for every $n \in \mathbb{N}$.\\

\begin{definition}\label{forte}
A sequence  $(\nu_n)_n \subset \mathscr{M}(S)$ setwise converges to  $\nu \in \mathscr{M}(S)$ if for every $A \in \mathcal{C}$
\begin{eqnarray}\label{setw}
 \lim_{n \to \infty} \overline{\nu_n - \nu}(A) = 0.
\end{eqnarray}
\end{definition}

\noindent
In  the $\sigma$-additive case, the setwise convergence
is given by $\lim_{n \to \infty} \nu_n(A) = \nu(A)$ for every $A \in \mathcal{C}$, see for example \cite{PiMMS}.
Since $|\nu_n(A)  - \nu(A)| \leq \overline{\nu_n - \nu}(A)$ for every $A \in \mathcal{C}$, the convergence given in Definition \ref{forte} implies the one of \cite{PiMMS}; the converse does not hold in general.
Neverthless, the two definitions coincide, from \cite[Remark 1]{ccgis}, if $\nu,\, \nu_n$, for all $n \in \mathbb{N}$, are finitely additive and non negative.\\

Finally, if  $S$ is a locally compact Hausdorff topological space, we denote by $\mathcal{K}$ the lattice of all compact subsets of
$S$, $\mathcal{B}$ the Borel $\sigma$-algebra (i.e.,the smallest $\sigma$-algebra containing $\mathcal{K}$) and $\mathcal{O}$
the class of all open sets.
\begin{definition} \rm
 A  set function $\nu: \mathcal{B} \to [0, \infty)$ is called regular if for every set $A\in \mathcal{B}$ and every $\varepsilon>0$
 there exist $K\in \mathcal{K}$ and $D\in \mathcal{O}$ such that $K\subseteq A\subseteq D$ and $\nu(D\setminus K)<\varepsilon.$
\end{definition}

%========================================
\section{The Riemann-Lebesgue integrability}
Let $X$ be a Banach space over $\mathbb{R}$. 
As in Kadets and Tseytlin \cite[Definition 4.5]{K} (for scalar functions) and Potyrala \cite[Definition 7]{Po}  and Kadets and Tseytlin \cite{K2002} (for vector functions),  we introduce the following definition:

\begin{definition}\label{def-int}
%\mg{3.1}
\mbox{\rm (\cite[Definition 5]{ccgis})}
\rm A vector function $f:S\to X$ is called
\textit{absolutely (unconditionally} resp.) \textit{Riemann-Lebesgue} ($|RL|$) ($RL$ resp.)
\textit{$\nu$-integrable} (on $S$) if there exists $a\in X$
such that for every $\varepsilon>0$, there exists a countable partition
$P_{\varepsilon}$ of $S$, so that for every countable partition
$P=\{A_{n}\}_{n\in \mathbb{N}}$ of $S$ with $P\geq P_{\varepsilon}$,
\begin{itemize}
\item
$f$ is bounded on every $A_{n}$, with $\nu(A_{n})>0$ and
\item for every $s_{n}\in A_{n}$, $n\in \mathbb{N}$,
the series $\sum_{n=0}^{+\infty}f(s_{n})\nu(A_{n})$ is absolutely (unconditionally resp.)
convergent and
\[\Big\Vert \sum_{n=0}^{+\infty}f(s_{n})\nu(A_{n})- a\Big\Vert<\varepsilon.\]
\end{itemize}
\end{definition}
The vector $a$ is called \textit{the absolute (unconditional)
Riemann-Lebesgue} $\nu$\textit{-integral of} $f$ \textit{on} $S$ and it
is denoted by ${\scriptstyle (|RL|)}\displaystyle{\int_S} f \, \mathrm{d}\nu$ \Big(${\scriptstyle (RL)}\displaystyle{\int_S} f \, \mathrm{d}\nu$ resp. \Big).\\

We denote by the symbol $|RL|^1_{\nu}(X)$ the class of all $X$-valued function that are $|RL|$ integrable with respect to $\nu$
and in an analogous way we denote the class of all functions that are $RL$
$\nu$-integrable.

\begin{remark}\label{rm-a} \rm \phantom{a}
(see \cite[Remark 2]{ccgis})
Obviously  if $a$ exists, then it is unique. Moreover, if $h$ is $|RL|$ $\nu$-integrable, then $h$ is $RL$ $\nu$-integrable and if
 $X$ is finite dimensional, then $|RL|$ $\nu$-integrability is equivalent to
$RL$ $\nu$-integrability. In this case, it is denoted by $RL$.

%If $X$ is finite dimensional, $RL$ $\nu$-integrability is called $\nu$-integrability for short and the integral is denoted by $\displaystyle{\int_S h \, \mathrm{d}\nu}$.\\
We remember also the following  in the countably additive case:
\begin{description}
\item [\ref{rm-a}.a)] Kadets and Tseytlin \cite{K} introduced the $|RL|$ $\nu$-integral and the $RL$ $\nu$-integral
for functions with values in a Banach space relative to a measure. They proved that if
$(S, \mathcal{C}, \nu)$ is a finite measure space, then the following implications hold:
%\\
 %Bochner integrability  $\Longrightarrow$
%$|RL|$ $\nu$-integrability $\Longrightarrow$
%$RL$ $\nu$-integrability $\Longrightarrow$
% Pettis integrability.

\[L^1_{\nu} (X) \subset |RL|^1_{\nu}(X) \subset RL^1_{\nu}(X) \subset P_{\nu} (X).\]
where $L^1_{\nu} (X),$ and $ P_{\nu} (X)$ denotes respectively the
Bochner and the Pettis  integrability.
\item [\ref{rm-a}.b)] If $X$ is a separable Banach space, then
%$|RL|$ $\nu$-integrability  $\Longrightarrow$   Bochner integrability
%and $RL$ $\nu$-integrability  $\Longrightarrow$   Pettis integrability.
\[L^1_{\nu} (X) = |RL|^1_{\nu}(X) \subset RL^1_{\nu}(X) = P_{\nu} (X.)\]
\item[\ref{rm-a}.c)] If $(S, \mathcal{C}, \nu)$ is a $\sigma$-finite measure space, then the Birkhoff integrability coincides
with $RL$ $\nu$-integrability (\cite{Po}).
\item [\ref{rm-a}.d)] If $h:[a, b]\rightarrow \mathbb{R}$ is Riemann integrable, then $h$ is $RL$-integrable (\cite[Corollary 17]{Po}).
The converse is not valid: for example the function $h:[0, 1]\rightarrow \mathbb{R}$, $h=\chi_{[0, 1]\cap \mathbb{Q}}$
is $RL$-integrable, but it is not Riemann integrable (\cite[Example 19]{Po}).
\end{description}
\end{remark}
%=========================================
%===============================================
\subsection{Some properties of $RL$ $\nu$-integrability}\label{RLint}
%\mg{RL-int}
In this section we present some results  contained in \cite{CCGS,ccgis}  regarding Riemann-Lebesgue integrability of vector
functions with respect to an arbitrary non-negative set function,
pointed out its remarkable properties.
We begin with a characterization of $|RL|$-integrability.
\begin{theorem}\label{3.1}
%\mg{3.4}
Let $g,h \in |RL|^1_{\nu} (X)$  and $\alpha, \beta \in\mathbb{R}$. Then:
\begin{description}
\item[(\ref{3.1}.a)]
If $h$  is $|RL|$ $\nu$-integrable on $S$, then  $h$ is $|RL|$ $\nu$-integrable on  every   $E\in
\mathcal{C}$ \mbox{\rm (\cite[Theorem 1.a]{ccgis})};
\item[(\ref{3.1}.b)]  $h$ is $|RL|$ $\nu$-integrable on  every   $E\in
\mathcal{C}$ if and only if
 $h\chi _{E}$ is $|RL|$ $\nu$-integrable on $S$.
 In this case, by  \mbox{\rm \cite[Theorem 1.b]{ccgis}},
 \[
{\scriptstyle (|RL|)}\int_{E}h\, \mathrm{d}\nu ={\scriptstyle (|RL|)}\int_S h\chi _{E}\, \mathrm{d}\nu .\]
 \end{description}
{\rm  (} The same holds for $RL$-integrability {\rm )}.
Moreover, by
\mbox{\rm \cite[Theorem 3]{ccgis},}
\begin{description}
\item[(\ref{3.1}.c)]  $\alpha g+\beta h \in|RL|^1_{\nu} (X)$  and
\begin{eqnarray*}
\displaystyle{{\scriptstyle (|RL|)}\int_S}(\alpha g +\beta h)\, \mathrm{d}\nu =
\alpha \cdot \displaystyle{{\scriptstyle (|RL|)}\int_S}g\, \mathrm{d}\nu
+\beta \cdot \displaystyle{{\scriptstyle (|RL|)}\int_S}h\, \mathrm{d}\nu,
\end{eqnarray*}
\item[(\ref{3.1}.d)] $h \in |RL|^1_{\alpha \nu}(X) $  for $\alpha \in
\lbrack 0,+\infty )$ and
\[
\displaystyle{{\scriptstyle (|RL|)}\int_S}h\, \mathrm{d}(\alpha \nu )=
\alpha \displaystyle{{\scriptstyle (|RL|)}\int_S}h\, \mathrm{d}\nu .
\]
\item[(\ref{3.1}.e)]
Suppose $h \in |RL|^1_{\nu_i}(X)$   for $i=1,2$. By
\mbox{\rm (\cite[Theorem 4]{ccgis})} $h \in |RL|^1_{\nu_1+\nu_2} (X)$  and
$$
{\scriptstyle (|RL|)}\int_S h \, \mathrm{d}(\nu_{1}+\nu_{2})=
{\scriptstyle (|RL|)}\int_S h \, \mathrm{d}\nu_{1}+{\scriptstyle (|RL|)}\int_S h \, \mathrm{d}\nu_{2}.$$

\end{description}
 Similar results also hold for the $RL$ $\nu$-integrability.
\end{theorem}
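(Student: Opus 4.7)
The plan is to prove all five parts by appealing to the Cauchy-type criterion implicit in Definition \ref{def-int}: the integrability of $h$ says that the net of Riemann-Lebesgue sums over tagged countable partitions of $S$, directed by the refinement relation \eqref{minore-ouguale}, is Cauchy in $X$ and converges absolutely. Each statement is proved by choosing the witness partition $P_\varepsilon$ simultaneously for all ingredients involved (via the common refinement $P \wedge P'$ defined in the preliminaries).

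For (\ref{3.1}.a) and (\ref{3.1}.b), I would first establish (\ref{3.1}.b), which is the cleaner formulation. Given $\varepsilon>0$ and the partition $P_\varepsilon$ from the $|RL|$-integrability of $h$, I refine it to $P_\varepsilon' := P_\varepsilon \wedge \{E,E^c\}$, still a countable partition of $S$. For any $P = \{A_n\} \geq P_\varepsilon'$, each $A_n$ lies entirely in $E$ or entirely in $E^c$. Choosing tags $s_n \in A_n$ arbitrarily on the $A_n \subset E$ and choosing a \emph{fixed} reference tag $s_n^0 \in A_n$ on the $A_n \subset E^c$, the sum $\sum (h\chi_E)(s_n)\nu(A_n)$ equals the sum of $h(s_n)\nu(A_n)$ over indices with $A_n \subset E$. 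Varying the tags over $A_n \subset E$ while freezing those over $A_n \subset E^c$, the differences of the two total sums on $S$ are bounded by $2\varepsilon$, giving a Cauchy condition on the partial sums over $E$ and identifying the limit as $\int_S h\chi_E \, d\nu = \int_E h \, d\nu$. Absolute convergence on $E$ is inherited because the tail $\sum_{A_n\subset E}\|h(s_n)\|\nu(A_n)$ is dominated by the corresponding absolutely convergent sum on $S$. Then (\ref{3.1}.a) is a restatement obtained by forgetting the extension by zero.

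For (\ref{3.1}.c), given $\varepsilon>0$ I pick witness partitions $P_\varepsilon^g, P_\varepsilon^h$ for $g$ and $h$, set $P_\varepsilon := P_\varepsilon^g \wedge P_\varepsilon^h$, and then for any tagged partition $P = \{(A_n,s_n)\} \geq P_\varepsilon$ use the finite linearity of the elementary sum, $\sum(\alpha g+\beta h)(s_n)\nu(A_n)=\alpha\sum g(s_n)\nu(A_n)+\beta\sum h(s_n)\nu(A_n)$, together with the triangle inequality to bound the deviation from $\alpha\int g\,d\nu+\beta\int h\,d\nu$ by $(|\alpha|+|\beta|)\varepsilon$. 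Absolute convergence of the combined series follows from $\|(\alpha g+\beta h)(s_n)\|\leq |\alpha|\|g(s_n)\|+|\beta|\|h(s_n)\|$. Boundedness of $\alpha g+\beta h$ on every $A_n$ with $\nu(A_n)>0$ is immediate from the corresponding boundedness for $g$ and $h$.

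For (\ref{3.1}.d) and (\ref{3.1}.e), the same partition $P_\varepsilon$ that works for $\nu$ (resp.\ the common refinement of the partitions witnessing integrability with respect to $\nu_1$ and $\nu_2$) works for the new set function, because the elementary sums factor as $\sum h(s_n)(\alpha\nu)(A_n)=\alpha\sum h(s_n)\nu(A_n)$ and $\sum h(s_n)(\nu_1+\nu_2)(A_n)=\sum h(s_n)\nu_1(A_n)+\sum h(s_n)\nu_2(A_n)$; the hypothesis $\alpha\geq 0$ and $\nu_i\geq 0$ ensures that absolute convergence is preserved (rather than merely conditional). The main technical obstacle I anticipate is the careful bookkeeping in (\ref{3.1}.a)–(\ref{3.1}.b): one must arrange a single reference choice of tags on $E^c$ so that the variable tags on $E$ produce a true Cauchy condition, and one must check that boundedness of $h\chi_E$ on blocks contained in $E^c$ is automatic. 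All other parts are essentially formal consequences of the linearity of the elementary tagged sums, as already exploited in \cite{ccgis}.
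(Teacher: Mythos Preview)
Your core technique for part (a)---freezing a common partition and common tags on $E^c$ so that the difference of two Riemann--Lebesgue sums over $S$ reduces to the difference of the sums over $E$, and then invoking a Cauchy criterion---is exactly what the paper does. Two remarks, however.

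First, your organization conflates (a) and (b). What you actually sketch is: \emph{assuming $h$ is $|RL|$-integrable on $S$}, the sums over $E$ form a Cauchy net, hence $h$ is integrable on $E$ and the value agrees with $\int_S h\chi_E\,\mathrm{d}\nu$. That is precisely (\ref{3.1}.a) together with one direction of (\ref{3.1}.b). But (\ref{3.1}.b) as stated does \emph{not} assume integrability of $h$ on all of $S$: it is the bare equivalence ``$h$ integrable on $E$ $\Leftrightarrow$ $h\chi_E$ integrable on $S$''. The paper proves both implications separately, starting from the weaker hypothesis in each case (extend a witness partition of $E$ by adjoining $S\setminus E$; conversely, intersect a witness partition of $S$ with $E$). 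Your sentence ``Then (\ref{3.1}.a) is a restatement obtained by forgetting the extension by zero'' has it backwards, and the direction ``$h\chi_E$ integrable on $S$ $\Rightarrow$ $h$ integrable on $E$'' is missing from your outline.

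Second, in the Cauchy step you write ``varying the tags over $A_n\subset E$ while freezing those over $A_n\subset E^c$'', but integrability requires the Cauchy condition across all \emph{finer partitions} of $E$, not just across tag choices for one fixed $P$. The paper makes this explicit by taking two partitions $\Pi_A,\Pi'_A$ of $A$ finer than the trace partition, extending both by the \emph{same} partition (and tags) of $A^c$, and comparing. Your idea is right, but the write-up should let the partition of $E$ vary as well.

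Parts (c)--(e) are not proved in the paper (they are quoted from \cite{ccgis}); your arguments for them via common refinements and linearity of the tagged sums are correct and standard.
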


\begin{proof} We report here only the proofs of (\ref{3.1}.a) and (\ref{3.1}.b).\\
 Fix any $A\in \mathcal{C}$ and
denote by $J$ the integral of $h$ on $S$; then, fixed any $\varepsilon>0$,  there exists a partition $P_{\varepsilon}$ of $S$,
such that, for every finer partition $P':=\{A_{n}\}_{n\in \mathbb{N}}$ it is
$$\left\|\sum_{n=0}^{+\infty}h(t_n) \nu(A_n)-J \right\|\leq \varepsilon.$$
Now,
denote by $P_0$ any partition  finer than $P'$ and also finer than $\{A, S\setminus A\}$, and by $P_A$ the partition
of $A$ consisting of all the elements of $P_0$ that are contained in $A$.\\
 Next, let $\Pi_A$ and $\Pi'_A$ denote two
 partitions of $A$ finer than $P_A$, and extend them with a common partition of $S\setminus A$ (also with the
 same {\it tags}) in such a way that the two resulting partitions, denoted by $\Pi$ and $\Pi'$, are both finer
than $P'$. So,  if we denote by
\begin{eqnarray}\label{sigmap}
\sigma(h,\Pi):=\sum_{n=0}^{\infty}h(t_{n}) \nu(A_{n}), \quad A_n \in \Pi,
\end{eqnarray}
then
\[\|\sigma(h,\Pi)-\sigma(h,\Pi')\|\leq
\|\sigma(h,\Pi)-J\|+\|J-\sigma(h,\Pi')\|\leq 2\varepsilon.
\]
Now, setting:
\[
\alpha_1:=\sum_{I\in \Pi_A}h(t_I) \nu(I),\hskip.6cm \alpha_2:=\sum_{I\in \Pi'_A}h(t'_I) \nu(I), \hskip.6cm
\beta:=\sum_{I\in \Pi, I\subset A^c} h(\tau_I) \nu(I),
\]
(with obvious meaning of the symbols), one has
\[
2\varepsilon\geq\|\alpha_1 + \beta - (\alpha_2+\beta)\|=\|\alpha_1-\alpha_2\|.
\]
By the arbitrariness of $\Pi_A$ and $\Pi'_A$, this means that the sums $\sigma(h,\Pi_A)$ satisfy a Cauchy
principle in $X$, and so the first claim follows by completeness.
\\
  Now, let us suppose that $f$ is $|RL|$ $\nu$-integrable on
$A\in \mathcal{C}$.\\
 Then for every $\varepsilon >0$ there exists a partition $
P_{A}^{\varepsilon }\in \mathcal{P}_{A}$ so that for every partition $
P_{A}=\{B_{n}\}_{n\in \mathbb{N}}$ of $A$ with
$P_{A}\geq P_{A}^{\varepsilon}$ and for every $s_{n}\in B_{n},n\in \mathbb{N}$, we have

\begin{eqnarray}\label{1}
\Big\|\sum_{n=0}^{\infty}h(s_{n}) \nu (B_{n})-{\scriptstyle (|RL|)}\int_{A}h\, \mathrm{d}\nu \Big\|<\varepsilon.
\end{eqnarray}
Let us consider
$P_{\varepsilon }=P_{A}^{\varepsilon }\cup \{S\setminus A\}$,
which is a partition of $S$.
 If $P=\{A_{n}\}_{n\in \mathbb{N}}$ is a partition of $S$ with
$P\geq P_{\varepsilon }$, then without any loss of generality we may write
$
P=\{C_{n},D_{n}\}_{n\in \mathbb{N}}$ with pairwise disjoint $C_{n},D_{n}$
such that $A=\displaystyle\cup _{n=0}^{\infty }C_{n}$ and
$\displaystyle\cup_{n=0}^{\infty }D_{n}=S\setminus A.$
Now, for every $u_{n}\in A_{n},n\in \mathbb{N}$ we get by (\ref{1}):
\begin{eqnarray*}
&&\left\|\sum_{n=0}^{\infty}h\chi _{A}(u_{n}) \nu(A_{n})- {\scriptstyle (|RL|)}\int_{A}h \, \mathrm{d}\nu \right\|=\\
= && \left\|\sum _{n=0}^{\infty}h\chi _{A}(t_{n}) \nu(C_{n})+
\sum\limits_{n=0}^{\infty}h\chi _{A}(s_{n})\nu(D_{n})-{\scriptstyle (|RL|)}\int_{A} h\, \mathrm{d}\nu
\right\|=\\
=&&\left\|\sum _{n=0}^{\infty}h(t_{n}) \nu(C_{n})-{\scriptstyle (|RL|)}\int_{A} h\, \mathrm{d}\nu \right\|<\varepsilon ,
\end{eqnarray*}
where $t_{n}\in C_{n}, s_{n}\in D_{n},$ for every $n\in \mathbb{N},$
which says that $f\chi _{A}$ is $|RL|$ $m $%
-integrable on $S$ and $\displaystyle{{\scriptstyle (|RL|)}\int_S} h\chi _{A}\, \mathrm{d}\nu :={\scriptstyle (|RL|)}\int_{A}h \,\mathrm{d}\mu .$\\

Finally, suppose that $f\chi _{A}$ is $|RL|$ $\nu$-integrable on $S$. Then for every
$\varepsilon >0$ there exists $P_{\varepsilon }=\{B_{n}\}_{n\in \mathbb{N}}\in
\mathcal{P}$ so that for every
$P=\{C_{n}\}_{n\in \mathbb{N}}$ partition of $S$ with $P\geq P_{\varepsilon }$
and every $t_{n}\in C_{n},n\in \mathbb{N}$, we have
\begin{eqnarray}\label{2}
\left\|\sum_{n=0}^{\infty} h \chi _{A}(t_{n}) \nu (C_{n})-
{\scriptstyle (|RL|)}\int_S  h \chi_{A}\, \mathrm{d}\nu \right\|<\varepsilon.
\end{eqnarray}
Let us consider $P_{A}^{\varepsilon }=\{B_{n}\cap A\}_{n\in \mathbb{N}}$,
which is a partition of $A$. Let $P_{A}=\{D_{n}\}_{n\in \mathbb{N}}$
be an arbitrary partition of $A$ with $P_{A}\geq P_{A}^{\varepsilon }$ and
$P=P_{A}\cup \{S\setminus A\}.$ Then $P$ is a countable  partition finer than
$P_{\varepsilon }.$
 Let us take $t_{n}\in D_{n}, \, n\in \mathbb{N}$ and $s\in S\setminus A$. By (\ref{2}) we obtain
\begin{eqnarray*}
&&\left\|\sum_{n=0}^{\infty}h(t_{n})\nu(D_{n})-
\displaystyle{{\scriptstyle (|RL|)}\int_S} h \chi _{A}\, \mathrm{d}\nu \right\|=\\
= &&
\left\|\sum_{n=0}^{\infty} h \chi _{A}(t_{n}) \nu (D_{n})+
h \chi _{A}(s) \nu  (S\setminus A)-
\displaystyle{{\scriptstyle (|RL|)}\int_S} h \chi _{A}\, \mathrm{d}\nu \right\|<\varepsilon ,
\end{eqnarray*}
which assures that $f$ is $|RL|$ $\nu$-integrable on $A$ and
\[
{\scriptstyle (|RL|)}\int_{A} h \, \mathrm{d}\nu :={\scriptstyle (|RL|)}\int_S h \chi _{A}\, \mathrm{d}\nu.
\]
\end{proof}
In particular the $|RL|$ $\nu$-integrability with respect to a set function of finite variation
allows to obtain the following properties.
\begin{theorem}\label{upper-bound}
%\mg{upper-bound}
\mbox{\rm (\cite[Proposition 1, Theorems 2 and 5, Corollary 2]{ccgis})}
Let $\nu:S\to [0, \infty)$ be of finite variation.
If we suppose that  $h:S \to X$ is bounded
\begin{description}
\item [(\ref{upper-bound}.a)]    then  $h \in |RL|^1_{\nu} (X)$  and
	\[
	\Big\Vert {\scriptstyle (|RL|)}\int_S h \, \mathrm{d}\nu \Big\Vert
	\leq \sup\limits_{s\in S}\Vert h(s)\Vert \cdot \overline{\nu}(S).
	\]
\item[(\ref{upper-bound}.b)] If $h$ $=0$ $\nu$-a.e., then $h \in |RL|^1_{\nu} (X)$  and
	$\displaystyle{{\scriptstyle (|RL|)}\int_S}h\, \mathrm{d}\nu = 0$.
\end{description}
Moreover let  $g,h:S\rightarrow X$ be vector functions.
\begin{description}
\item[(\ref{upper-bound}.c)] If
 	$\sup\limits_{s\in S} \|g(s)-h(s)\|<+\infty $, $g \in |RL|^1_{\nu} (X)$  and $
	g=h \,\,\nu $-a.e., then $h\in |RL|^1_{\nu} (X)$  and
	\[ \displaystyle{{\scriptstyle (|RL|)}\int_S}g\, \mathrm{d}\nu
	=\displaystyle{{\scriptstyle (|RL|)}\int_S}h\, \mathrm{d}\nu .\]

\item[(\ref{upper-bound}.d)] If $g,h \in |RL|^1_{\nu} (X)$  then
\begin{eqnarray*}
\Big\Vert \displaystyle{{\scriptstyle (|RL|)}\int_S}g\, \mathrm{d}\nu - {\scriptstyle (|RL|)}\int_{S}h\, \mathrm{d}\nu \Big\Vert
\leq \sup_{s\in S}\Vert g(s)-h(s)\Vert\cdot \overline{\nu }(S).
\end{eqnarray*}
\end{description}
\end{theorem}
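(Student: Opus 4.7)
The strategy is to establish (a) as the main analytical result and then derive (b)--(d) from it, together with the linearity statement (\ref{3.1}.c) already at our disposal. Throughout I use Remark~\ref{rem-var}, namely that $\overline{\nu}$ is super-additive: $\sum_n \overline{\nu}(A_n) \leq \overline{\nu}(S)$ for every countable partition $\{A_n\}$ of $S$.

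For (\ref{upper-bound}.a), set $M := \sup_{s\in S}\|h(s)\|$. The first observation is that for any countable tagged partition $\{(A_n,s_n)\}$ of $S$,
$$\sum_{n=0}^\infty \|h(s_n)\, \nu(A_n)\| \leq M \sum_{n=0}^\infty \nu(A_n) \leq M \sum_{n=0}^\infty \overline{\nu}(A_n) \leq M\, \overline{\nu}(S),$$
so each Riemann--Lebesgue sum is absolutely convergent with norm at most $M\,\overline{\nu}(S)$. To produce the integral itself I plan a Cauchy-net argument: given $\varepsilon>0$, fix a partition $P_\varepsilon=\{B_k\}$ of $S$ with $\sum_{k>N}\overline{\nu}(B_k)<\varepsilon/(4M)$ for some $N$, and compare two arbitrary tagged refinements $P$, $P'$ of $P_\varepsilon$ by passing through their common refinement $P\wedge P'$. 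On the infinite tail $\bigcup_{k>N}B_k$ the contribution of each sum is controlled by $M\sum_{k>N}\overline{\nu}(B_k)<\varepsilon/4$; on each of the finitely many initial blocks $B_k$, $k\leq N$, the difference between the relevant sub-sums is bounded using boundedness of $h$ and the super-additive estimate $\sum_j \nu(A_n^j)\leq \overline{\nu}(A_n)$. Adding these up yields a Cauchy property in $X$, whose limit is the desired $a$; the norm bound then follows by passage to the limit in the initial inequality.

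For (\ref{upper-bound}.b), let $E\in\mathcal{C}$ with $\nu(E)=0$ and $h\equiv 0$ on $S\setminus E$, and take $P_\varepsilon=\{E, S\setminus E\}$. For any countable refinement $P=\{A_n\}$, the components $A_n\subseteq S\setminus E$ contribute $0$ because $h$ vanishes, while the components $A_n\subseteq E$ contribute at most $M\sum_n\overline{\nu}(A_n)\leq M\,\overline{\nu}(E)$; under the hypotheses in force ($\nu$ monotone with finite variation), $\overline{\nu}(E)=0$. Hence $h$ is $|RL|$ $\nu$-integrable with zero integral.

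For (\ref{upper-bound}.c) I would apply (b) to $f:=g-h$, which is bounded by hypothesis and vanishes $\nu$-a.e.; so $f\in|RL|^1_\nu(X)$ with integral $0$. Linearity (\ref{3.1}.c) applied to $h=g-f$ then gives $\int_S h\, d\nu = \int_S g\, d\nu$. For (\ref{upper-bound}.d), if $\sup_{s\in S}\|g(s)-h(s)\|=+\infty$ the inequality is trivial; otherwise $g-h$ is bounded and lies in $|RL|^1_\nu(X)$ by linearity, and applying (a) to $g-h$ together with $\int_S(g-h)\,d\nu=\int_S g\,d\nu-\int_S h\,d\nu$ yields the stated estimate.

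The delicate point of the whole proof is the Cauchy argument in (a): unlike the additive case, one does \emph{not} have $\nu(A_n)=\sum_j\nu(A_n^j)$ when $A_n$ is split by a refinement, so the bookkeeping of errors across refinements cannot telescope directly and must instead be routed through $\overline{\nu}$ as an ``additive envelope.'' The finiteness of $\overline{\nu}(S)$ is what forces the tail contributions to be uniformly small, and this is precisely where the hypothesis on the variation enters in an essential way.
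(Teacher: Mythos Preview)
The Cauchy argument you sketch for (\ref{upper-bound}.a) does not close, precisely at the point you yourself flag as delicate. On each head block $B_k$, $k\le N$, the only control your ingredients provide for the difference of the two sub-sums is the crude bound $2M\,\overline\nu(B_k)$: passing through the common refinement $P\wedge P'$ and invoking $\sum_j\nu(A_n^j)\le\overline\nu(A_n)$ gives nothing sharper, because there is no telescoping identity $\nu(A_n)=\sum_j\nu(A_n^j)$ in the non-additive setting, and the oscillation of a merely bounded $h$ on $A_n$ need not be small either. Summing those block estimates over $k\le N$ yields something of order $2M\,\overline\nu(S)$, a fixed constant, not an $\varepsilon$. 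No choice of $P_\varepsilon$ in your scheme changes this. The paper does not reproduce the proof of (a) --- it cites \cite[Proposition~1]{ccgis} --- so I cannot point you to the intended mechanism, but the argument as written does not establish integrability.

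For (\ref{upper-bound}.b) your route differs from the paper's and leans on an assumption not in force. You take a fixed $E\in\mathcal C$ with $\nu(E)=0$ and then need $\overline\nu(E)=0$; you justify this by monotonicity of $\nu$, but Theorem~\ref{upper-bound} does not assume $\nu$ monotone, and without that $\nu(E)=0$ does not force $\overline\nu(E)=0$. The paper instead works through the semivariation: from $\widetilde\nu(\{h\ne0\})=0$ it selects, for each $\varepsilon>0$, a set $B_\varepsilon\in\mathcal C$ containing $\{h\ne0\}$ with $\overline\nu(B_\varepsilon)<\varepsilon/M$, builds $P_\varepsilon$ so as to separate $B_\varepsilon$ from its complement, and bounds the $B_\varepsilon$-contribution of any finer tagged partition by $M\,\overline\nu(B_\varepsilon)<\varepsilon$. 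Your reductions of (c) and (d) to (a), (b) and the linearity of Theorem~\ref{3.1}(c) are sound once (a) and (b) are secured.
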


\begin{proof} We prove here (\ref{upper-bound}.b). From the boundedness of $h$ let
 $M\in \lbrack 0,\infty )$
so that $\|h(s)\|\leq M$, for every $s \in S$.
If $M=0$, then the conclusion is obvious.  Suppose $M>0.$ Let us denote
$A=\{s \in S : h(s)\neq 0 \}$. Since $h=0$ $\nu$-ae, we have
$\widetilde{\nu}(A)=0 $. Then, for every $\varepsilon >0$, there exists $B_{\varepsilon } \in
\mathcal{C}$ so that $A\subseteq B_{\varepsilon }$ and
$\overline{\nu}(B_{\varepsilon })<\varepsilon /M.$ Let us take the partition
$P_{\varepsilon }=\{C_{n}\}_{n\in \mathbb{N}}$ of $B_{\varepsilon}$,
and let $C_{0}=S\setminus B_{\varepsilon}$ and  add $C_0$ to $P_{\varepsilon }$.

Let  $P=\{A_{n}\}_{n\in \mathbb{N}}$  be an arbitrary partition of $S$ so that
$P\geq P_{\varepsilon }$.
 Without any loss of generality, we suppose that $P=\{D_{n},E_{n}%
\}_{n\in \mathbb{N}}\subset \mathcal{C},$
with pairwise disjoint sets $D_{n},E_{n}$ such that
\[ \bigcup_{n\in \mathbb{N}}D_{n}=C_{0} \qquad
\bigcup_{n\in\mathbb{N}}E_{n}=B_{\varepsilon }.\]
Let  $t_{n}\in D_{n}, s_{n}\in E_{n}$, for every $n\in \mathbb{N},$
%Let $u_{n}\in A_{n}$, for every $n\in \mathbb{N}$.
Then we can write

\begin{eqnarray*}
&&
 \big \Vert \sum\limits_{n=0}^{\infty}h(t_{n}) \nu(D_{n})+
\sum\limits_{n=0}^{\infty} h(s_{n}) \nu (E_{n})\big\Vert=
\|\sum\limits_{n=0}^{\infty}h(s_{n}) \nu (E_{n})\|\leq  \\
 &\leq&
 \sum\limits_{n=0}^{\infty}\|h(s_{n})\|\nu (E_{n})\leq
 M\cdot \overline{\nu}
(B_{\varepsilon })<\varepsilon,
\end{eqnarray*}
which ensures that $h$ is $|RL|$ $\nu$-integrable and $\displaystyle{{\scriptstyle (|RL|)}\int_T}h \, \mathrm{d}\nu =0.$
\end{proof}

%=============================
The next  theorem shows that the integral of a real function is monotone with respect to the
 integrands and to the set functions %with respect to the integration is considered,
(see \cite[Theorems 6 and 7]{ccgis}) in the following way.
\begin{theorem}\label{monotonicity}
%\mg{monotonicity}
Let  $g,h  \in |RL|^1_{\nu} (\mathbb{R})$
such that $g(s)\leq h(s),$ for every $\mathit{s\in S,}$
 then
\begin{description}
\item[(\ref{monotonicity}.a)]
			\quad
${\scriptstyle (|RL|)}\displaystyle{\int_S}g\, \mathrm{d}\nu \leq
{\scriptstyle (|RL|)}\displaystyle{\int_S}h\, \mathrm{d}\nu $.
\end{description}
Let
 $\nu_{1}$, $\nu _{2}:\mathcal{C}\rightarrow [0, +\infty )$
be  set functions such that $\nu _{1}(A)\leq \nu_{2}(A)$, for
every $A\in \mathcal{C}$
and $h\in |RL|^1_{\nu_i} (\mathbb{R}_0^+)$ for $i=1,2$ Then
\begin{description}
\item[(\ref{monotonicity}.b)]
		\quad ${\scriptstyle (|RL|)}\displaystyle{\int_S} h \, \mathrm{d}\nu _{1}\leq
		{\scriptstyle (|RL|)}\displaystyle{\int_S}h\, \mathrm{d}\nu _{2}$.
\end{description}
\end{theorem}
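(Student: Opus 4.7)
The plan for both parts is the same pointwise-to-integral argument: pick a common refinement where the Riemann--Lebesgue sums of the relevant data approximate the respective integrals, and then use that the inequality already holds termwise in the sums. The non-negativity of $\nu$ (resp.\ of $h$) turns the hypothesis into a termwise inequality, and absolute convergence of the series lets me pass to the limit.

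For (\ref{monotonicity}.a), I fix $\varepsilon>0$ and apply Definition \ref{def-int} to $g$ and to $h$ separately, obtaining countable partitions $P^g_{\varepsilon}$ and $P^h_{\varepsilon}$ of $S$. I then take their common refinement $P_{\varepsilon}:=P^g_{\varepsilon}\wedge P^h_{\varepsilon}$ as in Section~2. For any countable partition $P=\{A_n\}_{n\in\mathbb{N}}$ with $P\geq P_{\varepsilon}$ and any choice of tags $s_n\in A_n$, both series $\sum_n g(s_n)\nu(A_n)$ and $\sum_n h(s_n)\nu(A_n)$ converge absolutely and approximate ${\scriptstyle (|RL|)}\int_S g\,\mathrm{d}\nu$ and ${\scriptstyle (|RL|)}\int_S h\,\mathrm{d}\nu$, respectively, within $\varepsilon$. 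Since $g(s_n)\leq h(s_n)$ and $\nu(A_n)\geq 0$, the termwise inequality $g(s_n)\nu(A_n)\leq h(s_n)\nu(A_n)$ survives summation, giving
\[
{\scriptstyle (|RL|)}\int_S g\,\mathrm{d}\nu \;-\;\varepsilon \;\leq\; \sum_{n=0}^{\infty} g(s_n)\nu(A_n) \;\leq\; \sum_{n=0}^{\infty} h(s_n)\nu(A_n) \;\leq\; {\scriptstyle (|RL|)}\int_S h\,\mathrm{d}\nu \;+\;\varepsilon .
\]
Letting $\varepsilon\to 0$ yields (\ref{monotonicity}.a).

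For (\ref{monotonicity}.b) I repeat the construction with $h\in |RL|^1_{\nu_1}(\mathbb{R}_0^+)\cap |RL|^1_{\nu_2}(\mathbb{R}_0^+)$ in place of the pair $(g,h)$. Given $\varepsilon>0$, Definition \ref{def-int} yields partitions $P^1_{\varepsilon}$ and $P^2_{\varepsilon}$ adapted to the two set functions, and I set $P_{\varepsilon}:=P^1_{\varepsilon}\wedge P^2_{\varepsilon}$. For any finer $P=\{A_n\}$ with tags $s_n\in A_n$, the hypotheses $h(s_n)\geq 0$ and $\nu_1(A_n)\leq \nu_2(A_n)$ give $h(s_n)\nu_1(A_n)\leq h(s_n)\nu_2(A_n)$ for each $n$, and the two resulting series approximate ${\scriptstyle (|RL|)}\int_S h\,\mathrm{d}\nu_1$ and ${\scriptstyle (|RL|)}\int_S h\,\mathrm{d}\nu_2$ within $\varepsilon$. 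Summing and letting $\varepsilon\to 0$ yields (\ref{monotonicity}.b).

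There is no substantial obstacle here; the only point that needs a little care is the use of a common refinement so that one partition simultaneously serves both data. Boundedness of $g$ and $h$ on the atoms of positive $\nu$-measure (resp.\ $\nu_i$-measure), required by Definition \ref{def-int}, is inherited by any finer partition, so the common refinement indeed witnesses both integrabilities at the same time, and the term-by-term comparison of absolutely convergent series does the rest.
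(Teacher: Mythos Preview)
Your proof is correct and follows essentially the same approach as the paper: take a common partition (the paper simply asserts the existence of a single $P_0$ working for both $g$ and $h$, while you explicitly obtain it as the common refinement $P^g_\varepsilon\wedge P^h_\varepsilon$), compare the Riemann--Lebesgue sums termwise using $\nu\geq 0$, and let $\varepsilon\to 0$. The paper only writes out part (a) and leaves (b) implicit; your treatment of (b) is the natural analogue and matches what the paper intends.
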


\begin{proof} We prove here (\ref{monotonicity}.a).
 Let $\varepsilon >0$ be arbitrary. Since $g, h\in |RL|^1_{\nu} (\mathbb{R})$, there exists
a countable partition
 $P_{0}$ so that for every
$P=\{C_{n}\}_{n\in \mathbb{N}}, P\geq P_{0}$ and
every $t_{n}\in C_{n},n\in \mathbb{N}$, the series
$\sum_{n=0}^{\infty}g(t_{n}) \nu (C_{n})$,   $\sum_{n=0}^{\infty}h(t_{n}) \nu (C_{n})$
are
absolutely convergent and
\begin{eqnarray*}
\max \left\{ \left| {\scriptstyle (|RL|)}\int_S g\, \mathrm{d}\nu -
\sum\limits_{n=0}^{\infty}g(t_{n})\nu (C_{n}) \right|,
\,
\left| {\scriptstyle (|RL|)}\int_S h\, \mathrm{d}\nu -\sum_{n=0}^{\infty}h(t_{n}) \nu (C_{n}) \right| \right\} <\frac{\varepsilon}{3}.
\end{eqnarray*}
Therefore
\begin{eqnarray*}
  {\scriptstyle (|RL|)}\int_S g \, \mathrm{d}\nu  -
{\scriptstyle (|RL|)}\int_S h \, \mathrm{d}\nu  =&&
{\scriptstyle (|RL|)} \int_S  g \, \mathrm{d}\nu  -
\sum_{n=0}^{\infty}g(t_{n})\nu  (C_{n}) +
\sum_{n=0}^{\infty}g(t_{n}) \nu  (C_{n}) + \\
\\-  &&
\sum_{n=0}^{\infty}h(t_{n})\nu (C_{n}) +
\sum_{n=0}^{\infty}h(t_{n}) \nu (C_{n})-
{\scriptstyle (|RL|)}\int_S h\, \mathrm{d}\nu
<  \\< &&
\dfrac{2\varepsilon }{3}+
\Big[\sum_{n=0}^{\infty}g(t_{n}) \nu (C_{n})-\sum_{n=0}^{\infty}h(t_{n}) \nu (C_{n}) \Big] \leq \varepsilon
\end{eqnarray*}
since, by the hypothesis,  $\sum_{n=0}^{\infty}g(t_{n})\nu(C_{n})\leq \sum_{n=0}^{\infty}h(t_{n})\nu (C_{n}).$\\
 Consequently,
\[{\scriptstyle (|RL|)}\int_S g \, \mathrm{d}\nu  -
{\scriptstyle (|RL|)}\int_S h \, \mathrm{d}\nu \leq 0.\]
\end{proof}

For every $h:S\rightarrow X$ that is $|RL|$  ($RL$ resp.)  $\nu$-integrable on every set $E\in \mathcal{C}$, we
consider the
 $|RL|$  integral operator
 $T_h :\mathcal{C}\rightarrow X$, defined for every $E\in \mathcal{C}$
by,
%\mg{If}
\begin{eqnarray}\label{funz-int}
T_{h}  (E)={\scriptstyle (|RL|)}\int_{E}h \, \mathrm{d}\nu \quad \big( T_{h}  (E)={\scriptstyle (RL)}\int_{E}h\, \mathrm{d}\nu\quad\textrm{resp.}\big)
\end{eqnarray}

We point out that, even without the additive condition for the set function $\nu$,
 the indefinite integral is additive thanks to Theorem \ref{3.1}.\\

In the next theorem we present some properties of the set
function $T_h$.

\begin{theorem}\label{properties}
\mbox{\rm (\cite[Theorem 8]{ccgis})}
Let $h \in |RL|^1_{\nu} (X)$. If  $h$ is bounded, and  $\nu$ is of finite variation then
\begin{description}
\item[(\ref{properties}.a)]
\begin{itemize}
	\item   $T_h$ is of finite variation too;
	\item  $\overline{T_{h}} \ll \overline{\nu} $ in the $\varepsilon - \delta$ sense;
	%(i.e. for every  $\varepsilon>0$, $\exists \delta>0$ such that
	%	$\forall E\in \mathcal{P}(S)$, $\overline{\nu}(E)<\delta$ $\Rightarrow$ $\overline{T_{h}}(E)<\varepsilon$);
	\item  Moreover, if
 $\overline{\nu}$ is o-continuous (exhaustive resp.), then $T_h$ is also o-continuous (exhaustive resp.).
\end{itemize}
\item[(\ref{properties}.b)]
  If $h:S \rightarrow [0,\infty)$ is nonnegative and $\nu $ is  scalar-valued and monotone, then the same holds for $T_h$.
\end{description}
\end{theorem}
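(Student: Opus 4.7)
The plan is to leverage the norm bound from Theorem \ref{upper-bound}.a, which gives $\|T_h(E)\| \leq M\cdot \overline{\nu}(E)$ with $M := \sup_{s\in S}\|h(s)\|$. This single estimate will yield every claim in part (a), while part (b) reduces to the monotonicity of the integral in the integrand (Theorem \ref{monotonicity}.a).

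For part (a), first I would note that by Theorem \ref{3.1}.a, $h$ is $|RL|$ $\nu$-integrable on every $E\in\mathcal{C}$, so $T_h$ is well-defined, and by Theorem \ref{3.1}.b it also equals $(|RL|)\int_S h\chi_E \,\mathrm{d}\nu$. Applying Theorem \ref{upper-bound}.a with the set function $\nu$ restricted to $E$ (whose total variation is bounded by $\overline{\nu}(E)$, while $h$ is still bounded by $M$) yields the fundamental inequality
\[
\|T_h(E)\| \;\leq\; M\cdot \overline{\nu}(E), \qquad E\in\mathcal{C}.
\]
For the finite variation claim, given any finite disjoint family $\{A_i\}_{i=1}^n\subset\mathcal{C}$ with $A_i\subseteq E$, summing the inequality gives $\sum_{i=1}^n \|T_h(A_i)\| \leq M\sum_{i=1}^n \overline{\nu}(A_i) \leq M\cdot \overline{\nu}(E)$, using super-additivity of $\overline{\nu}$ from Remark \ref{rem-var}; taking the supremum over such families produces $\overline{T_h}(E) \leq M\cdot \overline{\nu}(E)$, in particular $\overline{T_h}(S)<\infty$. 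The $\varepsilon$-$\delta$ absolute continuity $\overline{T_h}\ll \overline{\nu}$ is then immediate with $\delta=\varepsilon/M$ when $M>0$, and trivial when $M=0$ (since then $h\equiv 0$, hence $T_h\equiv 0$). The o-continuity and exhaustivity assertions follow from the same pointwise bound $\|T_h(A_n)\| \leq M\cdot \overline{\nu}(A_n)$: in either scenario the hypothesis on $\overline{\nu}$ forces $\overline{\nu}(A_n)\to 0$, hence $T_h(A_n)\to 0$.

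For part (b), assume $h\geq 0$ and $\nu$ scalar-valued and monotone. Nonnegativity of $T_h$ follows directly from the definition of the integral, since every Riemann-Lebesgue sum $\sum h(s_n)\nu(A_n)$ is a sum of nonnegative scalars and a limit of such is nonnegative. For monotonicity, take $A,B\in\mathcal{C}$ with $A\subseteq B$. Then $h\chi_A \leq h\chi_B$ pointwise on $S$, both functions lie in $|RL|^1_\nu(\mathbb{R})$ by Theorem \ref{3.1}.b, and Theorem \ref{monotonicity}.a yields
\[
T_h(A) \;=\; (|RL|)\int_S h\chi_A\,\mathrm{d}\nu \;\leq\; (|RL|)\int_S h\chi_B\,\mathrm{d}\nu \;=\; T_h(B).
\]

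No serious conceptual obstacle is expected, as the argument is essentially a bookkeeping exercise collating earlier results. The mildly delicate points are (i) justifying that the norm bound of Theorem \ref{upper-bound}.a applies locally on each $E\in\mathcal{C}$ with $\overline{\nu}(E)$ (rather than $\overline{\nu}(S)$) on the right, which amounts to viewing $\nu$ on the trace $\sigma$-algebra of $E$ and observing that its total variation there equals $\overline{\nu}(E)$, and (ii) cleanly handling the degenerate case $M=0$ in the $\varepsilon$-$\delta$ statement.
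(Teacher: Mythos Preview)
Your argument for part (a) is essentially the same as the paper's: both derive the key pointwise bound $\|T_h(E)\|\leq M\,\overline{\nu}(E)$ from Theorem~\ref{upper-bound}.a, sum over disjoint families using the super-additivity of $\overline{\nu}$ from Remark~\ref{rem-var}, and read off finite variation, absolute continuity, o-continuity and exhaustivity directly from that bound.

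For part (b) you take a genuinely different and shorter route. The paper proves $T_h(A)\leq T_h(B)$ by a direct partition argument: it fixes partitions of $A$ and of $B$ witnessing the integrals, passes to a common refinement of $B$ compatible with $\{A,B\setminus A\}$, and then compares the Riemann sums term by term using the monotonicity of $\nu$ via $\nu(E_n\cap A)\leq\nu(E_n)$. You instead reduce the question to monotonicity in the integrand: $h\chi_A\leq h\chi_B$ pointwise (since $h\geq 0$ and $A\subseteq B$), both lie in $|RL|^1_\nu(\mathbb{R})$ by Theorem~\ref{3.1}.b, and Theorem~\ref{monotonicity}.a finishes. This is cleaner and, notably, does not actually use the monotonicity of $\nu$ at all---only its nonnegativity, which is already built into Theorem~\ref{monotonicity}.a. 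The paper's approach has the minor advantage of being self-contained from the definition, but your reduction to an already-established lemma is both correct and more economical.
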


\begin{proof} (\ref{properties}.a).
In order to prove that $T_h$ is of finite variation let $\{A_{i}\}_{i=1, \ldots, n}$ be a pairwise partition of $S$
 and $M=$ $\underset{s\in S}{\sup }\|h(s)\|.$ By Theorem \ref{upper-bound}.a)
% Corollary \ref{3.9}   %%%% qui ho cambiato richiamo, ar
 and Remark \ref{rem-var}, %-II, it
we have
\[ \sum_{i=1}^{n}\|T_h(A_{i})\|\leq
M\cdot\sum_{i=1}^{n}\overline{\nu}(A_{i}) \leq
M\cdot\overline{\nu}(S). \]
So $\overline{T_h}(S)\leq M$ $\overline{\nu}(S)$,
 that yields  $\overline{T_h}(S) < +\infty$. Now the absolute continuity  in the $\varepsilon - \delta$ sense follows from
Theorem \ref{upper-bound}.a).
\\
 Let $M$ as before. If $M=0$, then $h=0$, hence $T_h=0.$\\
If $M>0$, by
Theorem \ref{upper-bound}.a)
we have $\|T_h(A)\|\leq M\cdot \overline{\nu}(A),$
for every $A\in \mathcal{C}.$
So the o-continuity of $T_h$ follows from that of
 $\overline{\nu}$.
The proof of the exhaustivity is similar. \\
For  (\ref{properties}.b)
let  $A, B\in \mathcal{C}$ with $A\subseteq B$ and $\varepsilon >0$. Since $h$ is
$\nu$-integrable on $A$, there exists a countable partition $P_{1}=\{C_{n}\}_{n\in \mathbb{N}}$  of $A$
so that for every
 other finer  countable partition $P=\{A_{n}\}_{n\in \mathbb{N}},$ of $A$  and
every $t_{n}\in A_{n},n\in \mathbb{N}$, the series
 $\sum\limits_{n=0}^{\infty}h(t_{n})\nu (A_{n})$ is
absolutely convergent and
\begin{eqnarray}\label{11}
\left|T_h(A) -\sum_{n=0}^{\infty}h(t_{n})\nu(A_{n})\right|<\frac{\varepsilon}{2}.
\end{eqnarray}

Since $f$ is $\nu$-integrable on $B$, let
 $P_{2}=\{D_{n}\}_{n\in \mathbb{N}}$ a countable partition of $B$ with the same meaning as before for the set $A$.
\\
Let  $\widetilde{P}_{1}= \{C_{n}, B\setminus A\}_{n\in \mathbb{N}}$  and $\widetilde{P}_{1}\wedge P_{2}$
(both countable partitions of $B$).\\
  Let $P=\{E_{n}\}_{n\in\mathbb{N}}$ be
an arbitrary countable partition of $B$, with $P\geq \widetilde{P}_{1}\wedge P_{2}$.\\
We observe that $P^{''}=\{E_{n}\cap A\}_{n\in \mathbb{N}}$ is also a partition of $A$ and $P^{''}\geq P_{1}.$\\
If $t_{n}\in E_{n}\cap A, n\in \mathbb{N}$ we have
\[ \max \left\{
|T_h(B)  - \sum\limits_{n=0}^{\infty}h(t_{n})\nu(E_{n})|, \,
|T_h(A) -\sum\limits_{n=0}^{\infty} h(t_{n})\nu (E_{n}\cap A)| \right\} <\dfrac{\varepsilon}{2}.\]
 Therefore

\begin{eqnarray*}
T_h(A) - T_h(B) &\leq&
\left| T_h(A) -\sum\limits_{n=0}^{\infty}h(t_{n})\nu(E_{n}\cap A)\right|+
\\ &+&
\left[\sum\limits_{n=0}^{\infty}h(t_{n}) \nu (E_{n}\cap A)-\sum\limits_{n=0}^{\infty}h(t_{n}) \nu (E_{n}) \right]+\\
&+& \left|\sum\limits_{n=0}^{\infty}h(t_{n})\nu (E_{n})- T_h(B) \right|< \\
&<&
\varepsilon+ \left[\sum\limits_{n=0}^{\infty} h(t_{n})%
\nu (E_{n}\cap A)-\sum\limits_{n=0}^{\infty} h(t_{n})\nu (E_{n}) \right].
\end{eqnarray*}
Since, by the hypotheses,
$\sum\limits_{n=0}^{\infty}h(t_{n})\nu (E_{n}\cap A)\leq \sum\limits_{n=0}^{\infty}h (t_{n})\nu(E_{n})$,
then
 $T_h(A) \leq T_h(B) .$
  \end{proof}

\subsection{Comparison with other types of integrability}\label{sec-comp}
In the non additive case there are other types of integral that can be considered.
We present here some comparative results with the Gould and Birkhoff simple ones.
We recall that
\begin{definition}\label{3.10}
%\mg{3.10}
\rm (\cite[Definition 3.2]{CCGS})
A vector function $h:S\rightarrow X$ is called \textit{Birkhoff
simple $\nu$-integrable (on $S$)} if there exists $b\in X$ such that
for every $\varepsilon>0$, there exists a countable partition $P_{\varepsilon}$ of
$S$ so that for every other countable partition $P=\{A_{n}\}_{n\in
\mathbb{N}}$ of $S$, with $P\geq P_{\varepsilon}$ and every $s_{n}\in
A_{n}, n\in \mathbb{N},$ it holds
\[
\limsup_{n\rightarrow +\infty}
\Big\Vert \sum_{k=0}^{n}h(s_{k})\nu (A_{k})- b \Big\Vert <\varepsilon.
\]
The vector $b$ is denoted by $(Bs)\displaystyle{\int_S}h\, \mathrm{d}\nu$ and it is called \textit{the Birkhoff simple integral}
of $h$ (on $S$) with respect to $\nu$.
\end{definition}

Let
$(\mathcal{P}, \leq)$ be the family  of all finite partitions of $S$ ordered by the relation "$\leq$" given in \eqref{minore-ouguale}.
Given a vector function $h:S\rightarrow X$, we denote by $\sigma (P)$ the finite sum:
$\sigma(P) :=
 \sum\limits_{i=1}^{n}h(s_{i}) \nu (E_{i}),$ for every finite partition of $S$, $P=\{E_{i}\}_{i}^{n}\in \mathcal{P}$
 and every $s_{i}\in E_{i}, i\in\{1,\ldots, n\}.$ Following \cite{G}
\begin{definition}\label{3.11}
%\mg{3.11}
A function
$h:S\to X$ is called \textit{Gould $\nu$-integrable}(on $S$) if there exists
$a\in X$ such that for every $\varepsilon>0$, there exists a finite partition
$P_{\varepsilon}$ of $S$, so that for every other finite partition $P=\{E_{i}\}_{i=1}^{n}$
of $S$, with $P\geq P_{\varepsilon}$ and every $s_{i}\in E_{i}, i\in\{1, \ldots, n\},$ we have
$\|\sigma(P)-a\|<\varepsilon.$
The vector $a$ is called \textit{the Gould integral
of $h$ with respect to $\nu$}, denoted by $(G)\displaystyle{\int_S}h\, \mathrm{d}\nu.$
\end{definition}
Observe that
$h:S\rightarrow X$ is Gould $\nu$-integrable (on $S$) if and only if the
net $(\sigma(P))_{P\in (\mathcal{P},\leq)}$ is convergent in $X$,
The limit of
$(\sigma(P))_{P }$ is exactly the integral $(G)\int_{S}h\, \mathrm{d}\nu.$

Let  $Bs^1_{\nu} (S)$ and $G^1_{\nu}(S)$ be respectively the families of Birkhoff simple, Gould integrable functions.
In general $RL^1_{\nu}(S) \subset Bs^1_{\nu} (S)$ and the two integrals coincide, this is proved in \cite[Theorem 9]{ccgis},
while for what concernes the comparison between RL and Gould integrability for bounded functions we have the following relations:
\begin{itemize}
\item
 $RL^1_{\nu}(X)  = G^1_{\nu}(X)$
when
$\nu$ is  of finite variation and  defined on  a complete $\sigma$-additive measure by \cite[Proposition 2]{ccgis};
\item
 $RL^1_{\nu}(\mathbb{R})  = G^1_{\nu}(\mathbb{R})$
when $\nu$  is  of finite variation,  monotone and $\sigma$-subadditive
\cite[Theorem 10]{ccgis}.
\item  $RL^1_{\nu}(\mathbb{R})  \subset  G^1_{\nu}(\mathbb{R})$
on each atom $A \in \mathcal{C}$
when  $\nu$ is monotone, null additive and satisfies property $(\sigma)$, see \cite[Theorem 11]{ccgis}.
%In both cases  the two integrals coincide.
\end{itemize}
In all the  cases  the two integrals coincide.\\

Without  the $\sigma$-additivity of $\nu$, the second  equivalence  $RL^1_{\nu}(\mathbb{R})  = G^1_{\nu}(\mathbb{R})$
 for bounded functions does not hold. Suppose $S=\mathbb{N}$,
with $\mathcal{C}=\mathcal{P}(\mathbb{N})$ and
$$\nu(A)=\left\{\begin{array}{ll}
0, \ & card(A)<+\infty  \\
1, &  card(A)=+\infty,
\end{array}\right.
\quad \mbox{\rm for every }  A\in\mathcal{C}.
$$
Then, the constant function $h= 1$ is RL integrable  and then Birkhoff simple integrable and $(Bs)\int_{S}h\, \mathrm{d}\nu=0$.
However, $h$ is not Gould-integrable. In fact, if $P_{\varepsilon}$ is any finite partition of $\mathbb{N}$,
some of its sets are infinite, so the quantity $\sigma(P_{\varepsilon})$ is exactly  the number of the infinite sets belonging to $P_{\varepsilon}$.
 So the  quantity $\sigma(P)$ is unbounded when  $P$ runs over the family of all finer partitions of  $P_{\varepsilon}$.
\\

\subsection{Convergence results}
In
this subsection we want to quote some sufficient conditions in order to obtain, under suitable hypotheses,
a convergence result of this type
\[\lim\limits_{n\rightarrow \infty}{\scriptstyle (|RL|)}\int_{S}h_{n}\, \mathrm{d}\nu= {\scriptstyle (|RL|)}\int_{S}\lim\limits_{n\rightarrow \infty} h_n\, \mathrm{d}\nu,\]
 for  sequences of Riemann-Lebesgue integrable functions.
 We assume $\nu$ of finite variation  unless otherwise specified. Let $p\in [1, \infty)$ be fixed.
For every  real valued function $h:S\rightarrow \mathbb{R}$,
with $|h|^{p}\in RL^1_{\nu}(\mathbb{R})$, we associate the following number:
  \begin{eqnarray}\label{seminorm}
\Vert h\Vert_{p}= \Big( {\scriptstyle (|RL|)}\int_{S}|h|^{p} \, \mathrm{d}\nu \Big)^{\frac{1}{p}}.
\end{eqnarray}

\begin{theorem}
Let   $h,h_n : S \to X$, $\nu:\mathcal{C}\rightarrow [0,+\infty)$ and  $p\in [1, +\infty)$.
\begin{description}
\item[\mbox{\rm (\cite[Theorem 5]{cgis2022})}]
If $h, h_{n} \in |RL|^1_{\nu} (X)$  for every $n\in \mathbb{N}$ and
$h_{n}$  converges uniformly to $h$; or \\

\item[\mbox{\rm (\cite[Theorem 6]{cgis2022})}]
 If $X = \mathbb{R}$, $\sup_{s \in S, n \in \mathbb{N}} \big\{h(s), h_n(s) \big\} < +\infty$  and
$h_{n}\overset{\widetilde{\nu}}{\rightarrow} h$;
or\\

\item[\mbox{\rm (\cite[Theorem 8]{cgis2022})}]
If  $X = \mathbb{R}$,  $\nu$ is  monotone and
 $\widetilde{\nu}$ satisfies condition  \mbox{\rm \bf (E)}, $\sup_{s \in S, n \in \mathbb{N}} \big\{h(s), h_n(s) \big\} < +\infty$  and
    $h_{n}\overset{\nu-ae}{\longrightarrow} h$
\end{description}
then
\begin{eqnarray}\label{tesi}
\lim\limits_{n\rightarrow \infty}{\scriptstyle (|RL|)}\int_{S}h_{n}\, \mathrm{d}\nu= {\scriptstyle (|RL|)}\int_{S}h\, \mathrm{d}\nu.
\end{eqnarray}
Finally
\begin{description}
\item[\mbox{\rm (\cite[Theorem 7]{cgis2022})}]
If $X = \mathbb{R}$,  $\nu$ is countable subadditive (not necessarily of finite variation),
 $\chi_E \cdot |h_{n}-h|^{p}\in |RL|^1_{\nu} (\mathbb{R})$, for every  $E \in \mathcal{C}$ and $\|h_{n}-h\|_{p}\rightarrow 0.$
  Then $h_{n}\overset{\overline{\nu}}{\rightarrow} h.$
\end{description}
\end{theorem}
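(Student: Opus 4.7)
The plan is to establish a Chebyshev-type inequality: for every $\eta>0$ and every $n \in \mathbb{N}$,
\begin{eqnarray*}
\eta^p\, \overline{\nu}(F_n^\eta) \;\leq\; \|h_n - h\|_p^p, \qquad \text{where } F_n^\eta := \{s \in S : |h_n(s) - h(s)| \geq \eta\}.
\end{eqnarray*}
Once proved, this inequality together with the assumption $\|h_n - h\|_p \to 0$ yields $\overline{\nu}(F_n^\eta)\to 0$ for every $\eta>0$, which is exactly the convergence $h_n \overset{\overline{\nu}}{\rightarrow} h$. Note that applying the hypothesis $\chi_E |h_n - h|^p \in |RL|^1_{\nu}(\mathbb{R})$ with $E = S$ gives $|h_n - h|^p \in |RL|^1_{\nu}(\mathbb{R})$, so the seminorm $\|h_n - h\|_p$ from \eqref{seminorm} is well defined.

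To establish the inequality, fix $\eta>0$ and $n \in \mathbb{N}$, and recall the definition of the variation:
\begin{eqnarray*}
\overline{\nu}(F_n^\eta) \;=\; \sup\Bigl\{\,\sum_{i=1}^k \nu(A_i)\,:\, \{A_i\}_{i=1}^k \subset \mathcal{C} \text{ pairwise disjoint}, \; A_i \subseteq F_n^\eta\Bigr\}.
\end{eqnarray*}
Fix an admissible finite family $\{A_i\}_{i=1}^k$, put $A_0 := S \setminus \bigcup_{i=1}^k A_i$, and let $\varepsilon > 0$. By the $|RL|$-integrability of $|h_n - h|^p$, choose a countable partition $P_\varepsilon$ of $S$ such that every tagged partition finer than $P_\varepsilon$ yields a Riemann sum within $\varepsilon$ of $\|h_n - h\|_p^p$. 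Form the common refinement $P := \{A_0, A_1, \dots, A_k\} \wedge P_\varepsilon = \{B_j\}_{j \in \mathbb{N}}$; each $B_j$ is contained in exactly one $A_i$.

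Pick tags $s_j \in B_j$ and set $\sigma := \sum_{j} |h_n(s_j) - h(s_j)|^p\, \nu(B_j)$. For each $j$ with $B_j \subseteq A_i$ and $i \geq 1$, the tag $s_j$ lies in $F_n^\eta$, so $|h_n(s_j) - h(s_j)|^p \geq \eta^p$; the remaining terms of $\sigma$ are nonnegative. Invoking countable subadditivity of $\nu$, which gives $\nu(A_i) \leq \sum_{j:\, B_j \subseteq A_i} \nu(B_j)$ for every $i \geq 1$, and combining with the choice of $P_\varepsilon$,
\begin{eqnarray*}
\eta^p \sum_{i=1}^k \nu(A_i) \;\leq\; \eta^p \sum_{i=1}^k \sum_{j:\, B_j \subseteq A_i}\!\!\!\nu(B_j) \;\leq\; \sigma \;<\; \|h_n - h\|_p^p + \varepsilon.
\end{eqnarray*}
Letting $\varepsilon \downarrow 0$ and then taking the supremum over all admissible families $\{A_i\}$ produces the desired Chebyshev bound, and the theorem follows.

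The main obstacle is that, since $\nu$ is only countably subadditive and not necessarily additive, one cannot simply write $\eta^p\, \overline{\nu}(F_n^\eta) \leq \int_S \eta^p \chi_{F_n^\eta}\, d\nu \leq \int_S |h_n - h|^p\, d\nu$ via the monotonicity of Theorem \ref{monotonicity}: the function $\chi_{F_n^\eta}$ need not be $|RL|$-integrable, and even if it were, its integral would not a priori equal $\overline{\nu}(F_n^\eta)$. The argument above circumvents this by working with Riemann sums on a refinement compatible with the chosen test family $\{A_i\}$, turning the sums $\sum_j \nu(B_j)$ into lower bounds for $\nu(A_i)$ via precisely the $\sigma$-subadditivity hypothesis on $\nu$.
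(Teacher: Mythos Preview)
Your argument establishes the fourth part of the theorem (the one labeled as \cite[Theorem 7]{cgis2022}), and it is correct: the Chebyshev-type bound $\eta^p\,\overline{\nu}(F_n^\eta)\le \|h_n-h\|_p^p$ is obtained cleanly by refining a partition adapted to a test family $\{A_i\}_{i=1}^k \subset \mathcal{C}$ lying inside $F_n^\eta$, and the only nontrivial step---passing from $\sum_i \nu(A_i)$ to the partial Riemann sum over $\bigcup_i A_i$---is exactly where the countable subadditivity of $\nu$ enters. Your closing remark correctly identifies why one cannot simply invoke Theorem \ref{monotonicity} on the indicator $\chi_{F_n^\eta}$ in this non-additive setting.

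Note, however, that the paper does \emph{not} prove this part: it explicitly states ``We give only the proof of \cite[Theorem 6]{cgis2022}'' and then treats the second part (that $h_n \overset{\widetilde{\nu}}{\rightarrow} h$ together with uniform boundedness yields convergence of the integrals, by splitting $S$ into a set of small $\overline{\nu}$-variation where $|h_n - h|$ may be large and its complement where $|h_n - h|$ is uniformly small). Hence there is no overlap between what you prove and what the paper's own text proves. Your Chebyshev argument is the standard route for the $L^p$-to-measure-convergence implication and is presumably what the cited source \cite{cgis2022} does as well, but a direct comparison with the proof actually written in this survey is not possible.
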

\begin{proof}
We give only the proof of \cite[Theorem 6]{cgis2022}.
According to \cite[Proposition 1]{ccgis} $g,\, g_{n},\,  g_{n}-g \in |RL|^1_{\nu}(\mathbb{R})$, for every $ n\in \mathbb{N}$.
 Let $\alpha\in (0,+\infty)$ such that:
\[ \sup_{s\in S, n\in \mathbb{N}} \big\{ |g(s)|,\, |g_{n}(s)-g(s)| \big\}<\alpha.\]
 Let $\varepsilon>0$ be fixed.
 By hypothesis, there is $n_{0}(\varepsilon)\in \mathbb{N}$ such that
for every $ n\geq n_{0}(\varepsilon)$
\[\widetilde{\nu}(\{s\in S;\,\,  |g_{n}(s)-g(s)|\geq \varepsilon/4\overline{\nu}(S)\}
<\varepsilon/4\alpha.\]
 Then, there exists $A_{n}\in \mathcal{C}$ such that
$\{s\in S; |g_{n}(s)-g(s)|\geq \varepsilon/4\overline{\nu}(S)\} \subset A_{n}$
 and $\widetilde{\nu}(A_{n})=\overline{\nu}(A_{n})<\varepsilon/4\alpha.$
Using \cite[Theorem 3 and Corollary 1]{ccgis},
 for every $n\geq n_{0}(\varepsilon)$, it holds that:
 \begin{eqnarray*}
 &&\left|{\scriptstyle (|RL|)}\int_{S}g_{n}d \nu - {\scriptstyle (|RL|)}\int_{S}g\, \mathrm{d}\nu \right|\leq
 \left|{\scriptstyle (|RL|)}\int_{A_{n}}(g_{n}-g) \, \mathrm{d}\nu \right|+
 \left|{\scriptstyle (|RL|)}\int_{A^c_{n}}(g_{n}-g)\, \mathrm{d}\nu \right|\leq\\
 &\leq&
\overline{\nu}(A_{n})\cdot \sup_{s\in A_{n}}|g_{n}(s)-g(s)|+
\overline{\nu}(A^{c}_{n})\cdot \sup_{s\in A^{c}_{n}}|g_{n}(s)-g(s)|
<\varepsilon,
 \end{eqnarray*}
and this yields the \eqref{tesi}.
\end{proof}

  The following theorem establishes a Fatou type result for sequences of Riemann-Lebesgue integrable functions.
  \begin{theorem}\label{quasif}
\mbox{\rm (\cite[Theorem 9]{cgis2022}  )}
    Suppose $\nu:\mathcal{C}\rightarrow [0,+\infty)$ is a monotone set function of finite variation  such that
  $\widetilde{\nu}$ satisfies \mbox{\rm \bf (E)}.
    For every $n\in \mathbb{N}$, let $h_{n}:S\rightarrow \mathbb{R}$ be such that $(h_{n})$ is uniformly bounded.
    Then
\begin{eqnarray}\label{fatou}
{\scriptstyle (|RL|)}\int_{S}(\liminf_n h_{n})\, \mathrm{d}\nu\leq
 \liminf_n \Big({\scriptstyle (|RL|)}\int_{S}h_{n}\, \mathrm{d}\nu \Big).
\end{eqnarray}
  \end{theorem}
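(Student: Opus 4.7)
The plan is to emulate the classical Fatou argument, substituting the measure-theoretic tools by their $|RL|$-counterparts established in the previous subsections. For every $n \in \mathbb{N}$, set
\begin{eqnarray*}
g_n := \inf_{k \geq n} h_k, \qquad g := \liminf_n h_n = \sup_n g_n.
\end{eqnarray*}
By construction $(g_n)_n$ is pointwise increasing with limit $g$, and both each $g_n$ and $g$ inherit the uniform bound of $(h_n)_n$. Because $\nu$ is of finite variation, Theorem~\ref{upper-bound}.a) yields, without any further measurability assumption, that $g_n, g \in |RL|^1_\nu(\mathbb{R})$; this is what allows the strategy to work in the non-additive setting, where ``measurable = integrable'' is not available.

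Next, I would apply the monotonicity of the $|RL|$ integral (Theorem~\ref{monotonicity}.a)) to the pointwise inequality $g_n \leq h_k$, valid for every $k \geq n$, to obtain
\begin{eqnarray*}
{\scriptstyle (|RL|)}\int_S g_n \, \mathrm{d}\nu \;\leq\; \inf_{k \geq n} \,{\scriptstyle (|RL|)}\int_S h_k \, \mathrm{d}\nu.
\end{eqnarray*}
Letting $n \to \infty$ on the right-hand side produces $\liminf_n {\scriptstyle (|RL|)}\int_S h_n \, \mathrm{d}\nu$, the desired upper bound in \eqref{fatou}.

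The decisive step is to identify the limit of the left-hand side with ${\scriptstyle (|RL|)}\int_S g \, \mathrm{d}\nu$. For this one appeals to the a.e.-convergence theorem \cite[Theorem 8]{cgis2022} recalled above: $g_n \to g$ pointwise (hence $\nu$-a.e.), the family $\{g_n\}_n \cup \{g\}$ is uniformly bounded, $\nu$ is monotone and $\widetilde{\nu}$ satisfies \textbf{(E)} by hypothesis. All assumptions are therefore met, so $\lim_n {\scriptstyle (|RL|)}\int_S g_n \, \mathrm{d}\nu = {\scriptstyle (|RL|)}\int_S g \, \mathrm{d}\nu$, which combined with the previous display produces precisely \eqref{fatou}.

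The only genuine obstacle is this last a.e.-convergence step: this is exactly the reason condition \textbf{(E)} on $\widetilde{\nu}$ and the monotonicity of $\nu$ have been placed among the hypotheses, since neither dominated nor monotone convergence is available for free in the non-additive framework. Once that black box is invoked, the remainder of the argument is the textbook Fatou scheme transcribed in the $|RL|$-language, and the uniform boundedness assumption is what ensures, via Theorem~\ref{upper-bound}.a), that every function appearing in the chain of inequalities is already integrable.
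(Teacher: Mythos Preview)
Your argument is correct and follows exactly the classical Fatou scheme: set $g_n=\inf_{k\ge n}h_k$, use Theorem~\ref{upper-bound}.a) for integrability, Theorem~\ref{monotonicity}.a) for the chain of inequalities, and the a.e.--convergence result \cite[Theorem~8]{cgis2022} to pass to the limit on the left. The survey does not reproduce a proof of this statement (it merely quotes \cite[Theorem~9]{cgis2022}), but the proof in the original source proceeds by precisely this route, so your proposal matches it.
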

And its consequence
  \begin{corollary}\label{3.24}
\mbox{\rm (\cite[Theorem 10]{cgis2022}  )}
    Suppose $p\in (1, +\infty)$ and $\nu:\mathcal{C}\rightarrow [0,+\infty)$ is a monotone set function of finite variation
    such that $\widetilde{\nu}$ satisfies \mbox{ \rm (E)}.
    Let $h, \, h_n :S\rightarrow \mathbb{R}$ be  such that $h$ is bounded and
     $(h_{n})_n$ is pointwise convergent to $h$.
Let
$$g_{n}= 2^{p-1}(|h_{n}|^{p}+|h|^{p})-|h_{n}-h|^{p},$$  such that
    $(g_{n})$ is uniformly bounded, $|h|^{p}, |h_{n}|^{p}, |h_{n}-g|^{p}, g_{n},
    \inf_{k\geq n}g_{k}\in|RL|^1_{\nu}(\mathbb{R})$, for every $n\in \mathbb{N}$
    and $\|h_{n}\|_{p}\longrightarrow \|h\|_{p}.$ Then
$$\|h_{n}-h\|_{p}\longrightarrow 0.$$
    \end{corollary}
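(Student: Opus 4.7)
The plan is to invoke the Fatou-type inequality (Theorem \ref{quasif}) applied to the nonnegative auxiliary sequence $(g_n)$, then to exploit the linearity of the Riemann-Lebesgue integral (Theorem \ref{3.1}(c)) to rewrite the resulting inequality in terms of $\|h_n\|_p$, $\|h\|_p$ and $\|h_n-h\|_p$. This is the Brezis--Lieb strategy, adapted to the non-additive RL setting.

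First I would observe the elementary power-mean inequality $|a-b|^p \leq 2^{p-1}(|a|^p + |b|^p)$, valid for $p \geq 1$, which gives $g_n \geq 0$ pointwise on $S$. Since $h_n \to h$ pointwise and $t \mapsto |t|^p$ is continuous, it follows that $|h_n|^p \to |h|^p$ and $|h_n - h|^p \to 0$ pointwise; consequently $g_n \to 2^p |h|^p$ pointwise, so in particular $\liminf_n g_n = 2^p |h|^p$ on $S$.

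Next, I would apply Theorem \ref{quasif} to the uniformly bounded sequence $(g_n)$, whose integrability assumptions are exactly those stipulated in the corollary, to obtain
\[ 2^p \cdot {\scriptstyle (|RL|)}\int_S |h|^p \, \mathrm{d}\nu \;\leq\; \liminf_n \,{\scriptstyle (|RL|)}\int_S g_n \, \mathrm{d}\nu. \]
By Theorem \ref{3.1}(c),
\[ {\scriptstyle (|RL|)}\int_S g_n \, \mathrm{d}\nu \;=\; 2^{p-1}\|h_n\|_p^p + 2^{p-1}\|h\|_p^p - \|h_n-h\|_p^p. \]
Since by hypothesis $\|h_n\|_p \to \|h\|_p$ and $\|h\|_p^p < +\infty$, the arithmetic identity $\liminf_n (a_n - b_n) = \lim_n a_n - \limsup_n b_n$ (valid when $(a_n)$ converges) yields
\[ \liminf_n \,{\scriptstyle (|RL|)}\int_S g_n \, \mathrm{d}\nu \;=\; 2^p\|h\|_p^p - \limsup_n \|h_n-h\|_p^p. \]
Combining the two displayed inequalities and canceling the finite quantity $2^p\|h\|_p^p$ gives $\limsup_n \|h_n-h\|_p^p \leq 0$. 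Since $|h_n-h|^p \geq 0$ pointwise, Theorem \ref{monotonicity}(a) forces $\|h_n-h\|_p^p \geq 0$ for each $n$, so $\|h_n-h\|_p \to 0$.

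The step I expect to require the most care is checking that every manipulation remains inside $|RL|^1_\nu(\mathbb{R})$: specifically, that $g_n$, $\inf_{k\geq n} g_k$, $|h_n|^p$, $|h|^p$ and $|h_n-h|^p$ are simultaneously RL-integrable so that Theorem \ref{quasif} applies to $(g_n)$ and the linearity of Theorem \ref{3.1}(c) splits the integral cleanly. These are precisely the integrability assumptions built into the statement, so the verification reduces to bookkeeping, but it is where the non-additive setting really constrains the proof, since one cannot fall back on the usual dominated convergence shortcuts available in the $\sigma$-additive case.
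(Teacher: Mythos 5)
Your proof is correct and follows exactly the route the paper intends: the corollary is presented as a consequence of the Fatou-type Theorem \ref{quasif}, and the auxiliary sequence $g_{n}=2^{p-1}(|h_{n}|^{p}+|h|^{p})-|h_{n}-h|^{p}$ built into the statement is precisely the Brezis--Lieb device you use, with the linearity of Theorem \ref{3.1}(c) splitting the integral and the stated integrability hypotheses covering each term. No gaps.
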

%================ subsection
\subsection{ H\"{o}lder and Minkowski type inequalities}
    In the end of this section, we expose a result on the reverse inequalities of
 H\"{o}lder and Minkowski type in Riemann-Lebesgue integrability. First of all we need that $\nu$ satisfies the following property
\begin{definition} \label{def-set}
  The set function $\nu:\mathcal{C}\rightarrow [0,\infty)$ is called RL-integrable
  if for all $E\in \mathcal{C}, \chi_{E}\in RL^1_{\nu}(\mathbb{R})$ and $\int_{S}\chi_{E}\, \mathrm{d}\nu= \nu(E).$
\end{definition}
%We recall from \cite{cgis2022} the following theorem, which shows
%H\"older's and Minkowski's inequalities for $p\geq 1.$
\begin{theorem}\label{questo}
 \mbox{\rm (\cite[Theorem 4]{cgis2022} and \cite[Theorem 3.4]{{cgis2023}})}
  Let $\nu:\mathcal{C}\rightarrow [0,\infty)$ be a countable subadditive RL-integrable
  set function and let $g, h:S\rightarrow \mathbb{R}$ be measurable functions.\\
Let $p, q\in (1, \infty)$, with  $p^{-1}+q^{-1}= 1$.\,
\begin{description}
\item[(\ref{questo}.a)]   If \, $g\cdot h
\in RL^1_{\nu}(\mathbb{R})$,  then
\begin{equation*}
\|g\cdot h\|_{1}\leq \|g\|_{p}\cdot\|h\|_{q} \quad \text{\rm (H\"{o}lder Inequality)}.
\end{equation*}
\item[(\ref{questo}.b)] Let $p\in [1, \infty)$. If $|g+h|^{p}, |g+h|^{q(p-1)}, |g|^{p}$ and
$|h|^{p}$ are in $\in RL^1_{\nu}(\mathbb{R})$, then
\begin{equation*}
  \|g+h\|_{p}\leq \|g\|_{p}+ \|h\|_{p} \quad \text{\rm (Minkowski  Inequality).}
\end{equation*}
\end{description}
 Let $p, q \in (0, \infty)$ such that $0<p<1$ and $p^{-1}+q^{-1}= 1. $
\begin{description}
\item[(\ref{questo}.c)]  If $g \cdot h, |g|^{p}, |h|^{q}
\in RL^1_{\nu} (\mathbb{R})$ and $0<{\scriptstyle (RL)}\int_{S}|h|^{q}\, \mathrm{d}\nu$,  then
\begin{equation*}
\|g\cdot h\|_{1}\geq \|g\|_{p}\cdot\|h\|_{q} \quad \text{\rm  (Reverse H\"{o}lder Inequality)}.
\end{equation*}
\item[(\ref{questo}.d)] If $|g+h|^{p}, |g+h|)^{(p-1)q}, |g|^{p}$ and $|h|^{p}$
are $RL$-integrable, then
\begin{equation*}
  \big\Vert \, |g|+|h| \, \big\Vert_{p}\geq ||g||_{p}+ ||h||_{p} \quad \text{ \rm (Reverse Minkowski  Inequality).}
\end{equation*}
\end{description}
\end{theorem}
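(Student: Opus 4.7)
The plan is to adapt the classical Young--H\"{o}lder--Minkowski chain from the $\sigma$-additive theory, leaning on exactly three tools already in the paper: linearity (Theorem \ref{3.1}.c), monotonicity in the integrand (Theorem \ref{monotonicity}.a), and the identity $\int_S\chi_E\, \mathrm{d}\nu=\nu(E)$ built into Definition \ref{def-set}. The four inequalities split in the expected way: (a) is the core H\"{o}lder estimate; (b) reduces to (a) via the decomposition $|g+h|^p\leq |g||g+h|^{p-1}+|h||g+h|^{p-1}$; and (c)--(d) are obtained by substituting auxiliary conjugate exponents into (a)--(b).

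For (a), I would first dispose of the degenerate case $\|g\|_p=0$ or $\|h\|_q=0$ using Theorem \ref{upper-bound}.b, then normalize $G:=g/\|g\|_p$ and $H:=h/\|h\|_q$. Young's inequality gives the pointwise bound $|GH|\leq \frac{1}{p}|G|^p+\frac{1}{q}|H|^q$; integrating term-by-term via monotonicity and linearity yields $\|GH\|_1\leq 1/p+1/q=1$, which rescales to (a). For (b), the hypothesis $|g+h|^{q(p-1)}\in RL^1_\nu(\mathbb{R})$ combined with $q(p-1)=p$ identifies $\||g+h|^{p-1}\|_q$ with $\|g+h\|_p^{p-1}$, so applying (a) to each of the two summands in the splitting above and dividing by $\|g+h\|_p^{p-1}$ (with the trivial case $\|g+h\|_p=0$ peeled off first) produces Minkowski.

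For (c), set $P:=1/p>1$ and $Q:=1/(1-p)>1$ so that $1/P+1/Q=1$, and write $|g|^p=|gh|^p\cdot|h|^{-p}$ on the set $\{h\neq 0\}$; its complement contributes nothing to $\|g\|_p^p$ since $gh=0$ there, and this localization is licensed by Theorem \ref{3.1}.a and Theorem \ref{3.1}.b. Applying the H\"{o}lder inequality from part (a) with exponents $P,Q$ and exploiting $pP=1$ together with $-pQ=q$ gives
\[
\|g\|_p^p\leq \|gh\|_1^p\cdot\|h\|_q^{q(1-p)}=\|gh\|_1^p\cdot\|h\|_q^{-p};
\]
taking $p$-th roots (with $\|h\|_q>0$ by assumption) produces reverse H\"{o}lder. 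Part (d) then follows from the same two-term splitting $(|g|+|h|)^p=|g|(|g|+|h|)^{p-1}+|h|(|g|+|h|)^{p-1}$ with reverse H\"{o}lder applied to each piece, yielding $\||g|+|h|\|_p^p\geq (\|g\|_p+\|h\|_p)\||g|+|h|\|_p^{p-1}$.

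The main obstacle is not algebraic but one of \emph{bookkeeping}: in the non-additive RL setting the class $RL^1_\nu(\mathbb{R})$ is not obviously closed under products, ratios, or non-integer powers, so every auxiliary function appearing in the proof must be traceable to an explicit RL-integrability hypothesis. The statement of Theorem \ref{questo} has been engineered so that exactly the functions needed ($|g|^p$, $|h|^q$, $|g+h|^p$, $|g+h|^{q(p-1)}$, etc.) are listed as integrable; my task then reduces to (i) verifying the exponent identities $q(p-1)=p$, $pP=1$, and $-pQ=q$, (ii) handling the degenerate subcases via Theorem \ref{upper-bound}.b, and (iii) for (c), restricting to $\{h\neq 0\}$ to make sense of $|h|^{-p}$.
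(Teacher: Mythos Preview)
Your plan for (a), (b), and (d) matches the intended route exactly; the paper only writes out (c) and (d), and for (d) it uses the same two-term splitting and division by $\big(\int(|g|+|h|)^{q(p-1)}\big)^{1/q}$ that you describe.

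The genuine divergence is in (c). The paper does \emph{not} reduce to forward H\"{o}lder; it invokes the pointwise reverse Young inequality $ab\ge a^{p}/p+b^{q}/q$ (valid for $a,b>0$, $0<p<1$), substitutes $a=|g|/\|g\|_{p}$ and $b=|h|/\|h\|_{q}$, and integrates via Theorems \ref{3.1}.c and \ref{monotonicity}.a. Your route---setting $P=1/p$, $Q=1/(1-p)$ and applying part (a) to the pair $(|gh|^{p},|h|^{-p})$---is a legitimate alternative and arguably more economical, since it recycles (a) rather than importing a second pointwise inequality. The exponent identities $pP=1$ and $-pQ=q$ are correct, and the three integrability inputs you need for (a) at exponents $(P,Q)$ (namely $|gh|$, $|g|^{p}$, $|h|^{q}$) are exactly those listed in the hypotheses of (c).

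There is, however, a slip in your handling of the set $\{h=0\}$. You write that this set ``contributes nothing to $\|g\|_{p}^{p}$ since $gh=0$ there''; but $gh=0$ on $\{h=0\}$ says nothing about $|g|^{p}$ on that set, and any positive contribution there would break the upper bound on $\|g\|_{p}^{p}$ that your argument needs. The correct repair is simpler than what you wrote: since $q<0$ (the statement's ``$p,q\in(0,\infty)$'' is a typo---$1/p+1/q=1$ with $0<p<1$ forces $q<0$), the hypothesis $|h|^{q}\in RL^{1}_{\nu}(\mathbb{R})$ already requires $|h|^{q}$ to be a real-valued function, hence $h(s)\neq 0$ for every $s\in S$, and $\{h=0\}=\emptyset$. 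The paper's proof glosses over the same point (reverse Young needs $b>0$), so this is a shared implicit assumption rather than a defect unique to your approach.
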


According to \cite[Remark 4]{cgis2022}, for $p\in [1, \infty)$, the function $\| \cdot\|_{p}$ defined in \eqref{seminorm} is a seminorm on the linear space
of measurable $RL$-integrable functions.

\begin{proof} We give here only the proof of the reverse part.
\begin{description}
\item[(\ref{questo}.c)] If ${\scriptstyle (RL)}\int_{S}g^{p}\, \mathrm{d}\nu =0$, then according to \cite[Theorem 3]{cgis2022}
  it follows $g \cdot h=0$ $\nu-a.e.$ In this case, the inequality of integrals is satisfied.\\
Consider ${\scriptstyle (RL)}\int_{S}g^{p}\, \mathrm{d}\nu >0$.
   We replace $a= \frac{|g|}{({\scriptstyle (RL)}\int_{S}|g|^{p}\, \mathrm{d}\nu)^{\frac{1}{p}}}$ and $b= \frac{|h|}{({\scriptstyle (RL)}\int_{S}|h|^{q}\, \mathrm{d}\nu)^{\frac{1}{q}}}$ in
  the reverse Young inequality $ab\geq \dfrac{a^{p}}{p}+\dfrac{b^{q}}{q}$, for every $a, b>0$ and for every $0<p<1$ with
  $\frac{1}{p}+ \frac{1}{q}=1$ (see for example \cite{Adams,chen}). Then
\begin{eqnarray*}
  \frac{|gh|}{( {\scriptstyle (RL)}\int_{S}|g|^{p}\, \mathrm{d}\nu)^{\frac{1}{p}}({\scriptstyle (RL)}\int_{S}|h|^{q}\, \mathrm{d}\nu)^{\frac{1}{q}}}
  \geq \frac{|g|^{p}}{p({\scriptstyle (RL)}\int_{S}|g|^{p}\, \mathrm{d}\nu)}+ \frac{|h|^{q}}{q({\scriptstyle (RL)}\int_{S}|h|^{q}\, \mathrm{d}\nu)}.
\end{eqnarray*}
  Applying \cite[Theorems 3 and 6 ]{ccgis} it holds
\begin{eqnarray*}
  \frac{{\scriptstyle (RL)}\int_{S}|gh|\, \mathrm{d}\nu}{({\scriptstyle (RL)}\int_{S}|g|^{p}\, \mathrm{d}\nu)^{\frac{1}{p}}({\scriptstyle (RL)}\int_{S}|h|^{q}\, \mathrm{d}\nu)^{\frac{1}{q}}}
  \geq \frac{{\scriptstyle (RL)}\int_{S}|g|^{p}\, \mathrm{d}\nu}{p(\int_{S}|g|^{p}\, \mathrm{d}\nu)}+
\frac{{\scriptstyle (RL)}\int_{S}|h|^{q}\, \mathrm{d}\nu}{q({\scriptstyle (RL)}\int_{S}|h|^{q}\, \mathrm{d}\nu)}
=1
\end{eqnarray*}
and the conclusion yields.
\item[(\ref{questo}.d)]
 By (\ref{questo}.c), it results:
\begin{eqnarray*}
 {\scriptstyle (RL)} \int_{S}(|g|+|h|)^{p}\, \mathrm{d}\nu &=&
   {\scriptstyle (RL)}\int_{S}(|g|+|h|)^{p-1}(|g|+|h|)\, \mathrm{d}\nu\geq \\
&\geq& ( {\scriptstyle (RL)}\int_{S}(|g|+|h|)^{q(p-1)}\, \mathrm{d}\nu)^{\frac{1}{q}} ( {\scriptstyle (RL)}\int_{S}|g|^{p}\, \mathrm{d}\nu)^{\frac{1}{p}}+\\
  &+& ( {\scriptstyle (RL)}\int_{S}(|g|+|h|)^{q(p-1)}\, \mathrm{d}\nu)^{\frac{1}{q}}(\int_{S}|h|^{p}\, \mathrm{d}\nu)^{\frac{1}{p}}=\\
  &=&( {\scriptstyle (RL)}\int_{S}(|g|+|h|)^{q(p-1)}\, \mathrm{d}\nu)^{\frac{1}{q}}(\|g\|_{p}+ \|h\|_{p}).
\end{eqnarray*}
Dividing the above inequality by $( {\scriptstyle (RL)}\int_{S}(|g|+|h|)^{q(p-1)}\, \mathrm{d}\nu)^{\frac{1}{q}}$,
we obtain the Reverse Minkowski inequality.
\end{description}
\end{proof}
%===============================================
\section{Interval-valued Riemann-Lebesgue integral}
In the last years, a particular attention was addressed to the study of interval-valued multifunctions
and multimeasures because of their applications in statistics, biology, theory of games, economics, social sciences
and  software. Interval-valued multifunctions have been applied also to some new  directions,
 involving signal and image processing.\\
Motivated by the large number of fields in which the interval-valued multifunctions can be involved,
we present some classic properties for the Riemann-Lebesgue integral of an interval-valued multifunction
 with respect to an interval-valued set multifunction.

We begin by recalling some preliminaries.
The symbol $ck(\mathbb{R})$  denotes   the family of all non-empty,  convex,  compact subsets of $\mathbb{R}$,
  by convention, $\{0\}=[0,0]$. \\

   We consider on $ck(\mathbb{R})$ (\cite{hp}) the   Minkowski addition
   \[A\oplus B :=\{a+b\,\, |\,\, a\in A, \,\, b\in B\},
\quad \mbox{ for every } A, B\in ck(\mathbb{R})
\]
and the  multiplication by scalars
 \[\lambda A=\{\lambda a\,\, |\,\,  a\in A\},
\quad \mbox{ for every } \lambda \in \mathbb{R}, A\in ck(\mathbb{R}).
\]
   $d_H$ denotes the Hausdorff distance in $ck(\mathbb{R})$
and it is defined for every $ A, B\in ck(\mathbb{R}),$ in the following way:
   \[ d_H(A,B)=\max\{e(A,B), \, e(B,A)\},\]
 where $e(A,B) = \sup \{ d(x,B),\,\,  x \in A \}$. We use the symbol  $\|A\|_{H}$ to denote $d_H(A, \{0\})$.
In particular,  for closed intervals we have
   \begin{eqnarray*}
&& d_H([r,s], [x,y]) = \max \{|x-r|,|y-s|\}, \quad \mbox{\rm for every } r,s,x,y \in \mathbb{R};
\\
&& d_H([0,s], [0,y]) = |y-s|, \quad \mbox{\rm for every } s,y \in \mathbb{R}_{0}^{+};
\\
&& \|[r,s]\|_{\mathcal{H}}= s, \quad \mbox{\rm for every } r, s\in \mathbb{R}_{0}^{+}.
\end{eqnarray*}

In the subfamily $L(\mathbb{R})$  of   intervals in $ck(\mathbb{R})$  the following operations are also considered, for every $r,s,x,y \in \mathbb{R}$
(see the Interval Analysis in  \cite{moore66}):
\begin{description}
  \item[i)] $ [r,s] \smallbullet [x, y] = [rx, sy]$;
  \item[ii)] $[r,s] \subseteq [x,y]$ if and only if $x \leq r \leq s \leq y$;
  \item[iii)] $[r,s] \preceq [x,y]$ if and only if $r \leq x$ and $s \leq y$;  (weak interval order, \cite{GZ})
  \item[iv)] $[r, s] \wedge  [x, y] = [\min\{r,x\}, \min \{s,y\}]$;
  \item[v)] $[r, s] \vee  [x, y] = [\max\{r,x\}, \max \{s,y\}]$.
\end{description}
In general there is no relation between the weak interval order and the inclusion; they only coincide
on the subfamily  of intervals $[0,s], s\geq 0.$\\
For every pair of sequences of real numbers  $(u_{n})_{n}, (v_{n})_{n}$ such that $0\leq u_{n}\leq v_{n}$,
 for every $ n\in\mathbb{N}$, we define:
\begin{description}
\item[vi)] $\inf\limits_{n}[u_{n},\,  v_{n}]= [\inf\limits_{n} u_{n}, \,  \inf\limits_{n} v_{n}];$
\item[vii)] $\sup\limits_{n}[u_{n},\,  v_{n}]= [\sup\limits_{n} u_{n}, \, \sup\limits_{n} v_{n}];$
\item[viii)] $\liminf\limits_{n}[u_{n},\,  v_{n}]= [\liminf\limits_{n} u_{n},\, \liminf\limits_{n} v_{n}].$
\end{description}

We consider $(ck(\mathbb{R}_{0}^{+}), d_H, \preceq)$, namely the space $ck(\mathbb{R}_0^+)$
is endowed with the Hausdorff distance and the weak interval order.\\

For two set functions $\nu_{1}, \nu_{2} : \mathcal{C} \rightarrow \mathbb{R}_{0}^{+}$ with
$\nu_1(\emptyset)=\nu_2(\emptyset)=0$ and
$\nu_{1}(A) \leq \nu_{2}(A)$
 for every $A \in \mathcal{C}$ the set multifunction $\Gamma: \mathcal{C} \rightarrow L(\mathbb{R}_{0}^{+})$ defined by

\begin{eqnarray}\label{M}
\Gamma(A) = \big[ \nu_{1}(A), \nu_{2}(A) \big], \qquad \mbox{ for every  } A\in \mathcal{C}.
\end{eqnarray}
is called an interval-valued set function.
 In this case, $\Gamma$ is of finite variation if and only if $\nu_{2}$ is of finite  variation.\\

Let $\Gamma: \mathcal{C} \rightarrow L(\mathbb{R}_{0}^{+})$. We say that $\Gamma$ is an interval-valued multisubmeasure if
\begin{itemize}
\item \quad  $\Gamma (\emptyset )=\{0\};$
\item \quad  $\Gamma(A) \preceq \Gamma(B)$ for every $A, B \in \mathcal{C}$ with $A \subseteq B$  (monotonicity);
\item \quad $\Gamma(A \cup B) \preceq \Gamma(A) \oplus \Gamma(B)$ for every disjoint sets $A, B \in \mathcal{C}$.
(subadditivity).
\end{itemize}
By \cite[Remark 3.6]{pig} $\Gamma$ is a multisubmeasure with respect to $\preceq$
 if and only if $\nu_i$, $i=1,2$ are submeasures.
\begin{definition} \label{D_mult}
It is said that $\Gamma$ is a $d_H$-multimeasure if for every sequence of pairwice disjoint sets
$(A_n)_n \subset \mathcal{C}$ such that $\cup_{n}^{\infty}A_{n} = A$
\[
\lim_{n \to \infty} d_H(\sum_{k=1}^{n} \Gamma(A_{k}), \Gamma(A)) = 0,\]
\end{definition}
\subsection{The interval-valued RL-integral and its properties}
In what follows, all the interval-valued set functions we consider are   multisubmeasures.

Given $h_{1}, h_{2}: S \rightarrow \mathbb{R}_{0}^{+}$ with $h_{1}(s) \leq h_{2}(s)$ for all $s \in S$,
let $H: S \rightarrow L(\mathbb{R}_{0}^{+})$ be the interval-valued multifunction
 defined by
 \begin{equation} \label{H}
 H(s) := \big[h_{1}(s), h_{2}(s) \big], \qquad \mbox{for every  }  s \in S.
 \end{equation}
 $H$ is bounded if and only if $h_{2}$ is bounded.
 If $H, G:S\to L(\mathbb{R}_{0}^{+})$ are as in (\ref{H}) so that $G\preceq H$ or $G\subset H$ and
 $H$ is bounded, then $G$ is bounded too.
For every countable tagged partition $P =\{ (B_{n}, s_{n}),  n \in \mathbb{N}\}$ of $S$ we denote by
\begin{eqnarray*}
\sigma_{H,\Gamma} (P) &=& \sum_{n=1}^{\infty} H(s_{n}) \smallbullet \Gamma(B_{n})
=  \sum_{n=1}^{\infty} \big[h_{1} (s_{n}) \nu_{1} (B_{n}), h_{2}(s_{n}) \nu_{2} (B_{n}) \big] = \\
&=& \Big\{ \sum_{n=1}^{\infty} y_{n}, y_{n} \in  \big[h_{1} (s_{n}) \nu_{1}(B_{n}), h_{2}(s_{n}) \nu_{2} (B_{n}) \big], n \in \mathbb{N} \Big\}.
  \quad
\end{eqnarray*}
The set $\sigma_{H, \Gamma} (P)$ is closed and convex in $\mathbb{R}_{0}^{+}$, so it is an interval
$\big[h_{1,H, \Gamma}^{P}, h_{2,H, \Gamma}^{P} \big]$.\\
\begin{definition}
\rm
A  multifunction $H:S\rightarrow L(\mathbb{R}_{0}^{+})$ is called
 Riemann-Lebesgue (RL in short) integrable with respect to $\Gamma$ (on $S$) if there exists $[c,d] \in L(\mathbb{R}_{0}^{+}) $
such that for every $\varepsilon>0$, there exists a countable partition
$P_{\varepsilon}$ of $S$, so that for every tagged partition
$P=\{(B_{n}, s_{n})\}_{n\in \mathbb{N}}$ of $S$ with $P\geq P_{\varepsilon}$,
\begin{itemize}
\item \quad the series $\sigma_{H, \Gamma} (P)$ is convergent with respect to the Hausdorff distance $d_H$ and
\item \quad
$d_H (\sigma_{H, \Gamma} (P), [c,d]) <\varepsilon$.
\end{itemize}
The interval $[c,d]$ is called the Riemann-Lebesgue integral of $H$ with respect to $\Gamma$ and it is denoted
$$[c,d]= {\scriptstyle (RL)}\int_{S} H \, \, \mathrm{d}\Gamma.$$
The symbol $RL^1_{\Gamma}(L(\mathbb{R}_{0}^{+}))$ denotes the class of all interval-valued functions that are Riemann-Lebesgue  integrable with respect to $\Gamma$ on $S$.
\end{definition}

\begin{example} \rm (\cite[Example 5]{cgis2022})
 Suppose $S = \{s_{n}\, | \, n\in \mathbb{N}\}$ is countable, $\{s_{n}\}\in \mathcal{C}$, for every $n \in \mathbb{N}$, and let
$H : S \rightarrow L(\mathbb{R}_{0}^{+})$   be such that the series
$\displaystyle\sum_{n=0}^{\infty}h_{i}(s_{n})\nu_{i}(\{s_{n}\}), \, i\in\{1,2\}$, are convergent.
Then $H$ is  $RL$ integrable  with respect to $\Gamma$ and
\[
{\scriptstyle (RL)}\int_{S} H\, \, \mathrm{d}\Gamma=
\left[\sum_{n=0}^{\infty}h_{1}(s_{n})\nu_{1}(\{s_{n}\}),
\sum_{n=0}^{\infty}h_{2}(s_{n})\nu_{2}(\{s_{n}\})\right].
\]
Observe moreover that, in this case, the  $RL$-integrability of $H$  with respect to $\Gamma$ implies that the product $H \smallbullet H$
is integrable in the same sense, where
$(H {\scalebox{1.8}{$\cdot$}} H)(s)=[h_{1}^{2}(s), h_{2}^{2}(s)]$, for every $s\in S$.
 In particular, if $H$ is a discrete or countable interval-valued signal, then the integral
${\scriptstyle (RL)}\int_{S} H \smallbullet H\, \, \mathrm{d}\Gamma$ represents the energy of the signal, see for example \cite[Example 2]{danilo}.\\
\end{example}
If $\Gamma$ is of finite  variation and $H: S\rightarrow L(\mathbb{R}_{0}^{+})$ is bounded and
such that $H=\{0\}$\, $\Gamma$-a.e., then $H$ is $\Gamma$-integrable and
${\scriptstyle (RL)}\int_{S}H\, \mathrm{d}\Gamma= \{0\}.$\\

In the sequel, some properties of interval-valued $RL$ integrable multifunctions (\cite{danilo}) are presented for $\Gamma$
as in (\ref{M}) and the multifunctions as in (\ref{H}).
The following theorem shows a characterization of the $RL$ integrability.
\begin{theorem}\label{4,3}
\mbox{\rm (\cite[Proposition 2]{danilo})}
	An interval-valued multifunction $H=[h_{1}, h_{2}]$
%as in (\ref{H})
 is $RL$ integrable with respect to $\Gamma$ on $S$
if and only if $h_{1}$ and $h_{2}$ are $RL$ integrable with respect to $\nu_{1}$ and $\nu_{2}$ respectively and

	\begin{eqnarray*}
{\scriptstyle (RL)} \int_{S} H \, \mathrm{d}\Gamma =
 \Big[ {\scriptstyle (RL)}\int_{S} h_{1} \, \mathrm{d}\nu_{1}, {\scriptstyle (RL)}\int_{S} h_{2}  \, \mathrm{d}\nu_{2} \Big].
	\end{eqnarray*}
\end{theorem}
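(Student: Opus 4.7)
The proof hinges on the explicit identification carried out immediately before the statement: for any countable tagged partition $P = \{(B_n, s_n)\}_{n \in \mathbb{N}}$ of $S$, one has
$$\sigma_{H,\Gamma}(P) = \Bigl[\sum_{n=1}^{\infty} h_1(s_n)\nu_1(B_n),\, \sum_{n=1}^{\infty} h_2(s_n)\nu_2(B_n)\Bigr],$$
so that the two endpoints of the interval-valued sum are exactly the scalar Riemann--Lebesgue sums appearing in Definition~\ref{def-int} for the pairs $(h_1,\nu_1)$ and $(h_2,\nu_2)$. Combined with the formula $d_H([r,s],[x,y]) = \max\{|r-x|,|s-y|\}$ on intervals in $\mathbb{R}_0^+$, this reduces the whole statement to the observation that Hausdorff-convergence of a sequence of intervals with non-negative endpoint increments is equivalent to coordinate-wise convergence of the two endpoint series.

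For the \emph{if} direction, assume $h_i \in RL^1_{\nu_i}(\mathbb{R})$ with integrals $I_i$, $i=1,2$. Given $\varepsilon>0$, pick countable partitions $P_\varepsilon^1, P_\varepsilon^2$ witnessing scalar RL integrability of $h_1, h_2$ at level $\varepsilon$, and take their common refinement $P_\varepsilon := P_\varepsilon^1 \wedge P_\varepsilon^2$. For every tagged partition $P \geq P_\varepsilon$, both scalar series $\sum_n h_i(s_n)\nu_i(B_n)$ converge and satisfy $\bigl|\sum_n h_i(s_n)\nu_i(B_n) - I_i\bigr| < \varepsilon$. By the identification above, $\sigma_{H,\Gamma}(P)$ is Hausdorff-convergent, and
$$d_H\bigl(\sigma_{H,\Gamma}(P),[I_1,I_2]\bigr) = \max_{i=1,2} \Bigl|\sum_n h_i(s_n)\nu_i(B_n) - I_i\Bigr| < \varepsilon,$$
which gives $H \in RL^1_{\Gamma}(L(\mathbb{R}_0^+))$ with integral $[I_1,I_2]$.

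For the \emph{only if} direction, assume $H$ is RL-integrable with integral $[c,d]$. Given $\varepsilon>0$, choose $P_\varepsilon$ from the definition. For any $P \geq P_\varepsilon$ and any tagging, Hausdorff-convergence of $\sigma_{H,\Gamma}(P)$ together with the non-negativity of the increments $h_i(s_n)\nu_i(B_n)$ forces each endpoint series $\sum_n h_i(s_n)\nu_i(B_n)$ to converge in $\mathbb{R}$. The estimate $d_H(\sigma_{H,\Gamma}(P),[c,d]) < \varepsilon$ then yields $\bigl|\sum_n h_1(s_n)\nu_1(B_n) - c\bigr| < \varepsilon$ and $\bigl|\sum_n h_2(s_n)\nu_2(B_n) - d\bigr| < \varepsilon$, so the same $P_\varepsilon$ witnesses scalar RL integrability of $h_1, h_2$ with integrals $c$ and $d$ respectively, and the claimed equality of integrals follows. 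The argument presents no real obstacle: it is essentially bookkeeping that exploits the full decoupling of the Minkowski product $H(s_n)\smallbullet \Gamma(B_n)$ and of the Hausdorff distance on $L(\mathbb{R}_0^+)$ into their two endpoint components; the only point deserving explicit mention is the non-negativity of the summands, which is what allows Hausdorff-convergence of the interval series to be transferred back to convergence of each scalar series.
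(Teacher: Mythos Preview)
Your proof is correct and follows essentially the same approach as the paper: both directions rely on the identification $\sigma_{H,\Gamma}(P)=[\sum_n h_1(s_n)\nu_1(B_n),\sum_n h_2(s_n)\nu_2(B_n)]$ together with the formula $d_H([r,s],[x,y])=\max\{|r-x|,|s-y|\}$, and the common refinement $P_\varepsilon^1\wedge P_\varepsilon^2$ for the ``if'' direction. Your presentation is in fact slightly more careful than the paper's in explicitly noting that non-negativity of the summands is what transfers Hausdorff-convergence of the interval series back to convergence of each scalar endpoint series.
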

\begin{proof}
By the  $RL$ integrability of $H$  there exists $[a,b]$
such that for every $\varepsilon>0$, there exists a countable partition
$P_{\varepsilon}$ of $S$, so that for every tagged partition
$P=\{(A_{n}, t_n)\}_{n\in \mathbb{N}}$ of $S$ with $P\geq P_{\varepsilon}$,
the series $\sigma_{H,\Gamma} (P)$ is
convergent and
\begin{eqnarray*}
&& \max\{ | \sum_{n=1}^{\infty} h_1 (t_n) \nu_1 (A_n) - a| ,\,
 | \sum_{n=1}^{\infty} h_2 (t_n) \nu_2 (A_n) - b| \,\}=
\\&&
d_H (\sum_{n=1}^{\infty} \big[h_{1} (s_{n}) \nu_{1} (B_{n}), h_{2}(s_{n}) \nu_{2} (B_{n}) \big]
 <\varepsilon.
\end{eqnarray*}
for every tagged partition
$P=\{(A_{n}, t_n)\}_{n\in \mathbb{N}}$ of $S$ with $P\geq P_{\varepsilon}$ and then
 $h_i$ are $RL$ integrable with respect to $\nu_i$, $i=1,2$. So the first implication  follows from the convexity of the $RL$ integral.\\
  For the converse, for every $\varepsilon>0$, let $ P_{\varepsilon, h_i}, i=1,2$ be two countable partitions verifying  the  $RL$ integrability definition  for $h_i, i=1,2$ respectively.
Let $P_{\varepsilon} \geq
P_{\varepsilon, h_1} \wedge P_{\varepsilon, h_2}$ be  a countable partition of $S$,
%\{ P_{\varepsilon, g_i}, \,\, i=1,2  \}$.
 then, for every  finer partition
$P:=( B_n)_n$ and for every $t_n \in B_n$ it is
\begin{eqnarray*}
\left|\sum_{n=0}^{+\infty}h_i(t_{n}) \nu_i (B_{n})- {\scriptstyle (RL)}\int_S h_i d\nu_i \right|<\varepsilon, \quad i=1,2.
\end{eqnarray*}
Since $h_i, \, i=1,2$ are selections of $H$ then
$$d_H \left(\sigma_{H,\Gamma} (P),
 \left[ {\scriptstyle (RL)}\int_S h_1 d\nu_1, \,
 {\scriptstyle (RL)}\int_S h_2  d\nu_2\right] \right) \leq \varepsilon$$
and then the only if assertion follows.
\end{proof}

 According to Theorem \ref{upper-bound} and the previous theorem, if $\nu_{2}$ is of finite variation and $h_{2}$ is bounded, then $H$ is RL integrable. Another consequence of the previous theorem, together with Theorem \ref{3.1}.b)
is the inheritance of the $RL$ integral on the subsets  $E\in\mathcal{C}$. In fact
\begin{corollary} \label{4.4}
Let $H \in RL^1_{\Gamma}(L(\mathbb{R}_{0}^{+}))$, then $H$ is
 $RL$ integrable  with respect to  $\Gamma$
 on every $E\in\mathcal{C}$.
Moreover, $H$ is  $RL$ integrable  with respect to  $\Gamma$ on $E\in \mathcal{C}$ if and only if $H\chi _{E}$
  is  $RL$ integrable  with respect to  $\Gamma$ on $S$.
 In this case, for every $E \in \mathcal{C}$,
  $${\scriptstyle (RL)}\int_{E} H \, \, \mathrm{d}\Gamma =
{\scriptstyle (RL)}\int_{S} H\chi _{E}\, \, \mathrm{d}\Gamma .$$
\end{corollary}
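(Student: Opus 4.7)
The plan is to reduce both interval-valued assertions to their scalar counterparts by invoking the characterization in Theorem \ref{4,3} and then applying Theorem \ref{3.1}.a) and Theorem \ref{3.1}.b) separately to each endpoint function $h_1$ and $h_2$.

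First I would use Theorem \ref{4,3} to decompose the hypothesis $H = [h_1,h_2] \in RL^1_{\Gamma}(L(\mathbb{R}_0^+))$ into the two scalar statements that $h_1$ and $h_2$ are $RL$-integrable on $S$ with respect to $\nu_1$ and $\nu_2$ respectively. Applying the scalar $RL$ version of Theorem \ref{3.1}.a) to each endpoint yields $RL$-integrability of $h_i$ on any fixed $E \in \mathcal{C}$ with respect to $\nu_i$. Reassembling via Theorem \ref{4,3} applied to the restricted set functions and multifunction over $(E, \mathcal{C}\cap E)$ produces the first claim that $H$ is $RL$-integrable with respect to $\Gamma$ on every $E \in \mathcal{C}$.

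For the equivalence, I would observe that since $0 \leq h_1 \leq h_2$, pointwise we have $H\chi_E(s) = [h_1(s)\chi_E(s), h_2(s)\chi_E(s)]$, so $H\chi_E$ is again of the form \eqref{H}. Hence by Theorem \ref{4,3}, $H\chi_E$ is $RL$-integrable on $S$ with respect to $\Gamma$ if and only if both $h_1\chi_E$ and $h_2\chi_E$ are $RL$-integrable on $S$ with respect to the corresponding $\nu_i$; by Theorem \ref{3.1}.b) this in turn holds if and only if each $h_i$ is $RL$-integrable on $E$ with respect to $\nu_i$, i.e.\ (again by Theorem \ref{4,3}) if and only if $H$ is $RL$-integrable on $E$ with respect to $\Gamma$. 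The identity ${\scriptstyle (RL)}\int_{S} H\chi_E\,\mathrm{d}\Gamma = {\scriptstyle (RL)}\int_{E} H\,\mathrm{d}\Gamma$ then follows by reading the scalar identity of Theorem \ref{3.1}.b) componentwise for $i=1,2$ and recombining the two real integrals into an interval via Theorem \ref{4,3}.

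The only step requiring care is the transfer of Theorem \ref{4,3} from the ambient set $S$ to a measurable subset $E$ endowed with the trace structure; this is not really an obstacle, since the proof of that theorem uses only countable partitions and the convexity/closedness of the Hausdorff sums $\sigma_{H,\Gamma}(P)$ read off from their endpoint functions, all of which make sense verbatim on $(E, \mathcal{C}\cap E, \Gamma|_{\mathcal{C}\cap E})$. No additional property of $\Gamma$ (such as $d_H$-multimeasure behaviour or finite variation) is needed for this corollary, precisely because Theorem \ref{3.1}.a)--b) were themselves proved in the non-additive setting.
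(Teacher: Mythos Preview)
Your proposal is correct and follows exactly the route the paper indicates: the corollary is stated as an immediate consequence of Theorem~\ref{4,3} together with Theorem~\ref{3.1}.a)--b), reducing the interval-valued statement to the scalar case componentwise. The paper gives no further details beyond that sentence, so your write-up is in fact more explicit than the original.
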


Moreover the RL integral is homogeneous with respect to both interval-valued multifunctions $H$ and $\Gamma$.
\begin{theorem} \label{4.5}
\mbox{\rm (\cite[Remark 7, Theorem 1 and Proposition 8]{danilo})}\\
If  $H, H_{1}, H_{2} \in RL^1_{\Gamma}(L(\mathbb{R}_{0}^{+}))$
then for every $\alpha\in [0, \infty)$:
\begin{description}
\item[(\ref{4.5}.a)] $\alpha H \in RL^1_{\Gamma}(L(\mathbb{R}_{0}^{+}))$  and
\[{\scriptstyle (RL)}\int_{S}\alpha H \, \mathrm{d}\Gamma =
\alpha {\scriptstyle (RL)} \int_{S} H  \, \mathrm{d}\Gamma.\]

\item[(\ref{4.5}.b)] $H \in RL^1_{\alpha \Gamma}(L(\mathbb{R}_{0}^{+}))$   and
\[{\scriptstyle (RL)}\int_{S} H \, \mathrm{d}(\alpha \Gamma)=
 \alpha  \int_{S} H \, \mathrm{d}\Gamma. \]

\item[(\ref{4.5}.c)] $H_{1} \oplus H_{2} \in RL^1_{\Gamma}(L(\mathbb{R}_{0}^{+}))$  and
\[{\scriptstyle (RL)}\int_{S}(H_{1}\oplus  H_{2})\, \mathrm{d}\Gamma =
{\scriptstyle (RL)}\int_{S} H_{1} \, \mathrm{d}\Gamma \oplus
{\scriptstyle (RL)}\int_{S} H_{2} \, \mathrm{d}\Gamma.\]
\end{description}
\end{theorem}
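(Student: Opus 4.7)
The plan is to reduce each of the three statements to the corresponding scalar Riemann-Lebesgue result, using Theorem \ref{4,3} as the bridge. Write $H=[h_1,h_2]$, $H_k=[h_{k,1},h_{k,2}]$ for $k=1,2$, and $\Gamma=[\nu_1,\nu_2]$. By Theorem \ref{4,3}, each hypothesis ``$H \in RL^1_{\Gamma}(L(\mathbb{R}_0^+))$'' is equivalent to the pair of scalar statements ``$h_i \in RL^1_{\nu_i}(\mathbb{R})$'' for $i=1,2$ together with the coordinatewise decomposition of the integral. Since $\alpha\geq 0$, Minkowski multiplication gives $\alpha H=[\alpha h_1,\alpha h_2]$ and $\alpha\Gamma=[\alpha\nu_1,\alpha\nu_2]$, while $H_1\oplus H_2=[h_{1,1}+h_{2,1},\ h_{1,2}+h_{2,2}]$; in all three cases the endpoint functions remain pointwise ordered, so the representation (\ref{H}) is legitimate.

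For (\ref{4.5}.a), applying Theorem \ref{3.1}.c to each endpoint with the second coefficient set to $0$ yields $\alpha h_i \in RL^1_{\nu_i}(\mathbb{R})$ with scalar integral $\alpha \cdot {\scriptstyle(|RL|)}\!\int_S h_i\, d\nu_i$; reassembling the two endpoint equalities via Theorem \ref{4,3} gives the interval identity. For (\ref{4.5}.b), Theorem \ref{3.1}.d applied to each $\nu_i$ shows that $h_i\in RL^1_{\alpha\nu_i}(\mathbb{R})$ with integral $\alpha \cdot {\scriptstyle(|RL|)}\!\int_S h_i\, d\nu_i$, and Theorem \ref{4,3} (with $\alpha\Gamma$ in place of $\Gamma$) again produces the interval identity. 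For (\ref{4.5}.c), Theorem \ref{3.1}.c with $\alpha=\beta=1$ gives $h_{1,i}+h_{2,i}\in RL^1_{\nu_i}(\mathbb{R})$ with integral the sum of the two scalar integrals for $i=1,2$; Theorem \ref{4,3} then rephrases this as ${\scriptstyle(RL)}\!\int_S(H_1\oplus H_2)\, d\Gamma=[\,\sum_{k} {\scriptstyle(RL)}\!\int_S h_{k,1}\,d\nu_1,\ \sum_{k} {\scriptstyle(RL)}\!\int_S h_{k,2}\,d\nu_2\,]$, which coincides with the Minkowski sum $\oplus$ of the two interval integrals.

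The principal thing to check is not subtle: it is only that the manipulations of intervals we perform commute with the endpoint decomposition. Concretely one needs (i) $\alpha\geq 0$ so that $\alpha[a,b]=[\alpha a,\alpha b]$ rather than $[\alpha b,\alpha a]$, (ii) that the pointwise order $h_{1}\leq h_{2}$ (and the setwise order $\nu_1\leq \nu_2$) is preserved under nonnegative scalar multiplication and under addition of two comparable pairs, so that the resulting intervals lie in $L(\mathbb{R}_0^+)$, and (iii) that the Minkowski sum of two closed intervals in $\mathbb{R}_0^+$ is the interval of sums. All three are immediate. Thus the real content of the theorem is already carried by Theorems \ref{3.1} and \ref{4,3}, and no genuine obstacle arises.
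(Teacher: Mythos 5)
Your proof is correct, but it follows a different route from the paper's. The paper only writes out a proof of (\ref{4.5}.c), and it does so directly at the level of Riemann--Lebesgue sums: it picks a common refinement of the two partitions witnessing the integrability of $H_1$ and $H_2$, and then passes from $d_H\bigl(\sigma_{H_i,\Gamma}(P),\int H_i\,\mathrm{d}\Gamma\bigr)<\varepsilon/2$ to the conclusion via the metric subadditivity of the Minkowski sum, $d_H(A_1\oplus A_2,\,B_1\oplus B_2)\le d_H(A_1,B_1)+d_H(A_2,B_2)$ (the cited Proposition 1.17 of Hu--Papageorgiou). You instead funnel everything through the endpoint characterization (Theorem \ref{4,3}), reducing each part to the scalar linearity statements of Theorem \ref{3.1}.c--d applied to $h_1,\nu_1$ and $h_2,\nu_2$ separately, and your bookkeeping of the order-preservation and of $\alpha\ge 0$ is exactly what is needed for the endpoint representation to remain valid. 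Your approach has the advantage of treating (a), (b), (c) uniformly and of actually proving (a) and (b), which the paper merely cites; the paper's direct argument for (c) has the advantage of not depending on the interval structure at all --- it would survive verbatim for multifunctions with general convex compact values, where no endpoint decomposition is available. There is no circularity in your reduction, since the proof of Theorem \ref{4,3} given in the paper uses only the definition of the integral.
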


\begin{proof} We give here the proof of (\ref{4.5}.c).
Namely we prove that for every pair of interval-valued multifunctions $H_1,H_2$, which are $RL$ integrable  with respect to $\Gamma$  we have that
%\mg{add}
\begin{eqnarray}\label{add} {\scriptstyle (RL)} \int_S (H_1 \oplus H_2) \, \mathrm{d}\Gamma =
{\scriptstyle (RL)} \int_S H_1 \, \mathrm{d}\Gamma \oplus
{\scriptstyle (RL)}\int_S  H_2\, \mathrm{d}\Gamma.
\end{eqnarray}
Let $\varepsilon >0$ be fixed. Since $H_1,H_2$ are $RL$ integrable with respect to $\Gamma$,
  there exists a countable partition
 $P_{\varepsilon}\in \mathcal{P}$ such that for every $P=\{A_{n}\}_{n\in \mathbb{N}} \geq P_{\varepsilon}$ and every
 $t_{n}\in A_{n},\, n\in \mathbb{N}$, the series $\sigma_{H_i,\Gamma}(P)$, $i=1,2$
 are  convergent and
\begin{eqnarray*}
&&
d_H\left(\sigma_{H_i,\Gamma}(P),{\scriptstyle (RL)}\int_S H_i \, \mathrm{d}\Gamma  \right) <\dfrac{\varepsilon }{2}, \qquad i=1,2.
\end{eqnarray*}
Then $\sigma_{H_1\oplus H_2,\Gamma}(P)$ is convergent and, by \cite[Proposition 1.17]{hp},
\begin{eqnarray*}
d_H\left(\sigma_{H_1 \oplus H_2,\Gamma}(P),{\scriptstyle (RL)}\int_S H_1 \, \mathrm{d}\Gamma \oplus
 {\scriptstyle (RL)}\int_S H_2 \, \mathrm{d}\Gamma   \right) <\varepsilon.
\end{eqnarray*}
So  $H_1\oplus H_2$ is $RL$ integrable with respect to $\Gamma$  and formula (\ref{add}) is satisfied.
%\\ Now applying formula (\ref{add})
% with $G_1 = H \chi_A, \, G_2 =H \chi_B$  to formula (\ref{chi}) we obtain the additivity of $T_H$.
\end{proof}

If $H \in RL^1_{\Gamma}(L(\mathbb{R}_{0}^{+}))$, then we may consider
$T_{H}: \mathcal{C} \rightarrow L(\mathbb{R}_{0}^{+})$ defined by
\begin{eqnarray}\label{TH}
T_{H} (E) = {\scriptstyle (RL)}\int_{E} H \, \mathrm{d}\Gamma,
 \qquad \mbox{for every  }  E\in \mathcal{C}.
\end{eqnarray}

In the following theorem we present some properties of the interval-valued integral set operator $T_{H}$.
\begin{theorem}  \label{op}
 Let $\Gamma:\mathcal{C} \rightarrow L(\mathbb{R}_{0}^{+})$ be  so that $\nu_{2}$ is of finite variation
 and $H:S\to L(\mathbb{R}_{0}^{+})$ is bounded. Then the following properties hold:
\begin{description}
\item[(\ref{op}.a)] $T_{H}$ is a finitely additive multimeasure, i.e. for every $A, B\in \mathcal{C}$, with $A\cap B=\emptyset$ it is
$T_{H}(A\cup B)= T_{H}(A)\oplus T_{H}(B).$ \mbox{\rm (\cite[Theorem 1]{danilo})}.
\item[(\ref{op}.b)] Let  $G, H \in RL^1_{\Gamma}(L(\mathbb{R}_{0}^{+}))$.
Then,  for every $E\in \mathcal{C}$, by \mbox{\rm \cite[Propositions 4 and 5]{danilo}},
if $G \preceq H$, then $T_{G} (E) \preceq T_{H}(E)$; \,\,
 if  $G\subseteq H$,  then $T_{G}(E) \subseteq T_{H}(E)$.\\
% If moreover  $\Gamma$ is a multisubmeasure,
Moreover, by \mbox{\rm \cite[Corollary 1]{danilo}}, for every $E \in \mathcal{C}$:
\[ T_{G \wedge H}(E)  \preceq
 T_{G} (E) \wedge T_{H}(E); \quad
 T_{G} (E) \vee T_{H}(E) \preceq T_{G \vee H}(E)
\]
\end{description}
Finally from \mbox{\rm \cite[Propositions 6 and 7, Theorem 2]{danilo}} we have that
\begin{description}
\item[(\ref{op}.c)]
\begin{itemize}
\item
$\|T_{H}(S) \|_{\mathcal{H}}=
{\scriptstyle (RL)}\int_{S} h_{2} \, \mathrm{d}\nu_{2}=
{\scriptstyle (RL)}\int_{S}\|H\|_{\mathcal{H}}\, \mathrm{d}\|\Gamma\|_{\mathcal{H}}.$
\item
$\overline{T}_{H}(S) = {\scriptstyle (RL)}\int_{S} h_{2}\, \mathrm{d}\nu_{2}.$
\item
$T_H \ll \overline{\Gamma} $ (in the $\varepsilon$ - $\delta$ sense) and
$T_H$ is of finite variation.
\item
 If moreover $\Gamma$ is o-continuous (exhaustive resp.), then $T_{H}$ is also o-continuous
 (exhaustive resp.).
\item If $\Gamma $ is monotone, then  $T_H$ is monotone too.

\item If  $\Gamma$ is  a $d_H$-multimeasure, then $T_{H}$ is countably additive.
\end{itemize}
\end{description}
 \end{theorem}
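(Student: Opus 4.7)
My plan is to reduce every claim to properties of the two scalar Riemann--Lebesgue integral operators $T_{h_1}$ and $T_{h_2}$ with respect to $\nu_1$ and $\nu_2$, defined by $T_{h_i}(E):={\scriptstyle (RL)}\int_E h_i \, \mathrm{d}\nu_i$. The key enabler will be Theorem \ref{4,3}, which yields the componentwise identity $T_H(E) = [T_{h_1}(E), T_{h_2}(E)]$ for every $E \in \mathcal{C}$. Because $h_2$ is bounded and $\nu_2$ is of finite variation, the same holds for $h_1 \leq h_2$ and $\nu_1 \leq \nu_2$, so Theorems \ref{upper-bound}, \ref{monotonicity}, and \ref{properties} are available for both $T_{h_1}$ and $T_{h_2}$. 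With this reduction, each bullet of the statement translates a known scalar property into an interval statement via the arithmetic and order relations recalled at the start of Section~4.

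For part (\ref{op}.a), I would combine Theorem \ref{3.1}.b and Theorem \ref{3.1}.c applied to $h_i\chi_A$ and $h_i\chi_B$ with $A \cap B = \emptyset$ to obtain $T_{h_i}(A \cup B) = T_{h_i}(A) + T_{h_i}(B)$ for $i=1,2$, and then read off the Minkowski sum of intervals. For part (\ref{op}.b), the hypothesis $G \preceq H$ unpacks as $g_i \leq h_i$ pointwise on $S$, so Theorem \ref{monotonicity}.a supplies $T_{g_i}(E) \leq T_{h_i}(E)$ componentwise, whence $T_G(E) \preceq T_H(E)$; the hypothesis $G \subseteq H$ flips the inequality on the lower endpoint and yields the inclusion. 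For the lattice inequalities, I would apply Theorem \ref{monotonicity}.a to the pointwise inequalities $\min(g_i,h_i) \leq g_i, h_i$ (and dually to the max), then rewrite using the interval operations (iv) and (v).

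Part (\ref{op}.c) can be handled one bullet at a time. The identity $\|T_H(S)\|_{\mathcal{H}} = T_{h_2}(S)$ is immediate from $\|[r,s]\|_{\mathcal{H}}=s$ for $0 \leq r \leq s$, and the further equality with ${\scriptstyle (RL)}\int_S \|H\|_{\mathcal{H}} \, \mathrm{d}\|\Gamma\|_{\mathcal{H}}$ uses $\|H(s)\|_{\mathcal{H}} = h_2(s)$ and $\|\Gamma(A)\|_{\mathcal{H}} = \nu_2(A)$. For the variation, observe that $\|T_H(A)\|_{\mathcal{H}} = T_{h_2}(A) \geq 0$ and $T_{h_2}$ is finitely additive by (\ref{op}.a), so summing over any finite partition $\{A_i\}$ of $S$ gives $\sum_i \|T_H(A_i)\|_{\mathcal{H}} = T_{h_2}(S) < +\infty$ by Theorem \ref{upper-bound}.a; therefore $\overline{T_H}(S) = T_{h_2}(S)$. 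Absolute continuity in the $\varepsilon$--$\delta$ sense, o-continuity, exhaustivity, and monotonicity of $T_H$ then transfer directly from the analogous properties of $T_{h_2}$, which is the dominating quantity for $T_H$ in the Hausdorff norm, via Theorem \ref{properties}.

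The step I expect to require the most care is the last one, namely countable additivity of $T_H$ when $\Gamma$ is a $d_H$-multimeasure. My plan is first to translate the $d_H$-multimeasure hypothesis into scalar $\sigma$-additivity of $\nu_1$ and $\nu_2$, using $d_H([0,a_n],[0,a]) = |a_n - a|$ on $\mathbb{R}_{0}^{+}$. With $\nu_i$ now $\sigma$-additive of finite variation, Remark \ref{rem-var} gives that $\overline{\nu_i}$ is $\sigma$-additive and hence o-continuous, so Theorem \ref{properties}.a yields the o-continuity of each $T_{h_i}$; combined with the finite additivity from (\ref{op}.a), this upgrades to scalar $\sigma$-additivity of $T_{h_1}$ and $T_{h_2}$. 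Finally I would promote this to $d_H$-countable additivity of $T_H$: for pairwise disjoint $(A_n)_n$ with union $A$, the intervals $\bigoplus_{k=1}^n T_H(A_k) = \big[\sum_{k=1}^n T_{h_1}(A_k),\ \sum_{k=1}^n T_{h_2}(A_k)\big]$ have both endpoints converging to those of $T_H(A)$, which forces $d_H$-convergence. The genuinely delicate point is this last passage from two scalar $\sigma$-additivities to the Hausdorff convergence of partial Minkowski sums, where the finite-variation hypothesis on $\nu_2$ is crucial to keep the upper endpoints finite and the convergence uniform between the two components.
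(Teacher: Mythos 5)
Your proposal is correct, and its engine --- the componentwise identity $T_H(E)=[T_{h_1}(E),T_{h_2}(E)]$ supplied by Theorem \ref{4,3}, followed by a transfer of the scalar results of Theorems \ref{3.1}, \ref{upper-bound}, \ref{monotonicity} and \ref{properties} to each endpoint --- is the natural one and is essentially what underlies the results cited from the source paper. The survey itself writes out a proof only of (\ref{op}.a), and does so at the interval level: it uses $H\chi_{A\cup B}=H\chi_A\oplus H\chi_B$ together with Corollary \ref{4.4} and the Minkowski-additivity of the integral (Theorem \ref{4.5}.c), rather than splitting into components; the two arguments are equivalent, since $d_H$ and $\oplus$ on $L(\mathbb{R}_0^+)$ are computed endpointwise. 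Everything else in the statement is merely cited, so your componentwise derivations --- in particular the translation of the $d_H$-multimeasure hypothesis into simultaneous $\sigma$-additivity of $\nu_1$ and $\nu_2$, the upgrade of finite additivity plus o-continuity of each $T_{h_i}$ to scalar $\sigma$-additivity, and the remark that $d_H$ of two intervals is the maximum of the two endpoint gaps (which makes your ``delicate'' last passage in fact immediate) --- supply arguments the survey omits, and they all go through. Two small precisions are worth adding. First, for the lattice inequalities in (\ref{op}.b) you should observe that $G\wedge H$ and $G\vee H$ are again bounded interval-valued multifunctions, hence $RL$-integrable by Theorem \ref{upper-bound}.a, before invoking Theorem \ref{monotonicity}.a. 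Second, the monotonicity of $T_H$ with respect to $\preceq$ is not controlled by $T_{h_2}$ alone: unlike o-continuity and exhaustivity, which involve only $\|T_H(\cdot)\|_{\mathcal{H}}=T_{h_2}(\cdot)$, it requires Theorem \ref{properties}.b applied to both pairs $(h_1,\nu_1)$ and $(h_2,\nu_2)$. Neither point affects the validity of your plan.
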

\begin{proof}
We  point out that the additivity of $T_H$ is indipendent of the additivity of $H$. In fact, by Corollary \ref{4.4} we have that $T_H(A) \in L(\mathbb{R}_0^+)$ for every $A \in \mathcal{C}$.
 Moreover for every $A, B \in \mathcal{C}$ with $A \cap B = \emptyset$,
by  Theorem \ref{4.5}.c)
%\mg{chi}
\begin{eqnarray*}\label{chi}
T_H (A \cup B) &=& \displaystyle{{\scriptstyle (RL)}\int_S} H \chi_{A \cup B}\, \mathrm{d}\Gamma =
\displaystyle{{\scriptstyle (RL)}\int_S} (H \chi_{A} \oplus H \chi_{ B})\, \mathrm{d}\Gamma = \\
 &=& \displaystyle{{\scriptstyle (RL)}\int_S} H \chi_{A}  \mathrm{d}\Gamma
\oplus
\displaystyle{{\scriptstyle (RL)}\int_S} H \chi_{ B}\, \mathrm{d}\Gamma
= T_H (A) \oplus T_H (B).
\end{eqnarray*}

\end{proof}

\begin{comment}
 \begin{remark}
\rm
 An important property is that the integral set operator $T_{H}$ is finite additive even $H$ and $\Gamma$
 do not satisfy this property of finite additivity.
 \end{remark}
\end{comment}
 The RL integral is additive and monotone with respect to the weak interval order and the inclusion one relative to
 $\Gamma$, as we can see in the following theorem.
\begin{theorem}   \label{T9}
\mbox{\rm (\cite[Theorems 3 and  4]{danilo})}
Let $\Gamma _{1},\,  \Gamma _{2}:\mathcal{A}\rightarrow L(\mathbb{R}_{0}^{+})$ be multisubmeasures of finite variation,
with $\Gamma _{1}(\emptyset )= \Gamma_{2}(\emptyset )=\{0\}$ and suppose
$H,G:S\rightarrow L(\mathbb{R}_{0}^{+})$ are bounded multifunctions. Then
 the following properties hold for every $E \in \mathcal{C}$:
\begin{description}

\item[(\ref{T9}.a)]
If \, $\Gamma := \Gamma_1 \oplus \Gamma_2$,
% and  $H \in RL^1_{\Gamma_i}(L(\mathbb{R}_{0}^{+}))$ for $i=1,2$,
then
%$H \in RL^1_{\Gamma}(L(\mathbb{R}_{0}^{+}))$ and
\quad
${\scriptstyle (RL)}\int_{E} H\, \mathrm{d}\Gamma= {\scriptstyle (RL)}\int_{E} H\, \mathrm{d}\Gamma_{1} \oplus
{\scriptstyle (RL)}\int_{E} H\, \mathrm{d}\Gamma_{2}$.

\item[(\ref{T9}.b)]
 If $\Gamma_{1}\preceq \Gamma_{2}$, then \quad
${\scriptstyle (RL)}\int_{E} H\, \mathrm{d}\Gamma_{1} \preceq
{\scriptstyle (RL)}\int_{E} H\, \mathrm{d}\Gamma_{2}$.

\item[(\ref{T9}.c)] If $\Gamma_{1}\subseteq \Gamma_{2}$, then  \quad
${\scriptstyle (RL)}\int_{E} H\, \mathrm{d}\Gamma_{1} \subseteq
{\scriptstyle (RL)}\int_{E} H \, \mathrm{d}\Gamma_{2}$.

\item[(\ref{T9}.d)]
\begin{equation*}
d_H \Big(
{\scriptstyle (RL)}\int_{S}G \, \mathrm{d}\Gamma ,
{\scriptstyle (RL)}\int_{S} H \, \mathrm{d}\Gamma \Big)
\leq \sup_{s\in S} d_H (G(s), H(s))\cdot \overline{\Gamma}(S).
\end{equation*}
\end{description}
\end{theorem}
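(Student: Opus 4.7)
The unifying strategy is to decompose everything interval-wise using Theorem \ref{4,3}: writing $H=[h_1,h_2]$, $G=[g_1,g_2]$, $\Gamma=[\nu_1,\nu_2]$, $\Gamma_i=[\nu_{1,i},\nu_{2,i}]$, each side of every claim unfolds into a pair of scalar $RL$-integrals, and then the scalar results from Section 3 (monotonicity, additivity in the integrand and in the set function, variation estimates) take over. The finite-variation hypothesis on $\Gamma$, $\Gamma_1$, $\Gamma_2$ guarantees (via the identity $\|[r,s]\|_{\mathcal{H}}=s$ on $\mathbb{R}_0^+$) that $\overline{\nu_{2,i}}(S)<\infty$, and since $\nu_{1,i}\le \nu_{2,i}$ one also has $\overline{\nu_{1,i}}(S)\le\overline{\nu_{2,i}}(S)=\overline{\Gamma_i}(S)<\infty$; this is what makes all scalar integrals in the argument well-defined and bounded.

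For (\ref{T9}.a), I would observe that $\Gamma_1\oplus\Gamma_2=[\nu_{1,1}+\nu_{1,2},\,\nu_{2,1}+\nu_{2,2}]$, so by Theorem \ref{4,3} the integral $\int_E H\,d(\Gamma_1\oplus\Gamma_2)$ equals $[\,{\scriptstyle (RL)}\!\int_E h_1\,d(\nu_{1,1}+\nu_{1,2}),\,{\scriptstyle (RL)}\!\int_E h_2\,d(\nu_{2,1}+\nu_{2,2})\,]$; applying Theorem \ref{3.1}.e) componentwise splits each endpoint into the sum of two integrals, which is exactly the right-hand side $\int_E H\,d\Gamma_1\oplus\int_E H\,d\Gamma_2$ once one reassembles the intervals. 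For (\ref{T9}.b), $\Gamma_1\preceq\Gamma_2$ is equivalent to $\nu_{1,1}\le\nu_{1,2}$ and $\nu_{2,1}\le\nu_{2,2}$; Theorem \ref{monotonicity}.b) applied to the nonnegative scalar integrands $h_1$ and $h_2$ then gives both endpoint inequalities simultaneously. For (\ref{T9}.c), $\Gamma_1\subseteq\Gamma_2$ inverts the lower endpoint comparison: $\nu_{1,2}\le\nu_{1,1}$ and $\nu_{2,1}\le\nu_{2,2}$; again Theorem \ref{monotonicity}.b) handles each of the two scalar inequalities, and the inclusion of intervals follows.

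For (\ref{T9}.d) I would compute
\begin{align*}
d_H\!\left({\scriptstyle (RL)}\!\int_S G\,d\Gamma,\;{\scriptstyle (RL)}\!\int_S H\,d\Gamma\right)
&=\max_{i=1,2}\left|{\scriptstyle (RL)}\!\int_S g_i\,d\nu_i-{\scriptstyle (RL)}\!\int_S h_i\,d\nu_i\right|,
\end{align*}
using Theorem \ref{4,3} and the formula $d_H([a,b],[c,d])=\max\{|a-c|,|b-d|\}$ on $\mathbb{R}_0^+$. Each term is then bounded by $\sup_{s}|g_i(s)-h_i(s)|\cdot\overline{\nu_i}(S)$ via Theorem \ref{upper-bound}.d). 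Finally, the pointwise inequality $|g_i(s)-h_i(s)|\le d_H(G(s),H(s))$ (again from the formula for $d_H$ of intervals) and the bound $\overline{\nu_i}(S)\le\overline{\nu_2}(S)=\overline{\Gamma}(S)$ collapse the estimate to $\sup_{s\in S}d_H(G(s),H(s))\cdot\overline{\Gamma}(S)$.

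The only mildly delicate point I anticipate is bookkeeping the identification $\overline{\Gamma}(S)=\overline{\nu_2}(S)$ and the companion bound $\overline{\nu_1}(S)\le\overline{\nu_2}(S)$ in part (d); beyond that, every step is a direct transfer of a scalar property already recorded in Section 3, and the whole proof is essentially the observation that Theorem \ref{4,3} turns interval-valued $RL$-statements into pairs of scalar $RL$-statements.
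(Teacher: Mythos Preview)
The paper does not supply its own proof of Theorem~\ref{T9}; it simply cites \cite[Theorems~3 and~4]{danilo} and moves on. Your proposal is correct and is the natural argument: the characterization Theorem~\ref{4,3} (which is Proposition~2 of the same source \cite{danilo}) reduces every interval-valued claim to a pair of scalar $RL$-statements, and then Theorems~\ref{3.1}.e), \ref{monotonicity}.b), and \ref{upper-bound}.d) dispatch parts (a)--(d) exactly as you describe. This is almost certainly the route taken in \cite{danilo} as well, so there is nothing further to compare.
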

\subsection{Convergence results}
In the following we present some results of \cite{danilo,cgis2022,cgis2021} regarding convergent sequences of
Riemann-Lebesgue integrable interval-valued multifunctions.
Firstly we recall the definitions of convergence almost everywhere and convergence in measure for
interval-valued multimeasures.
\begin{definition}\label{D11}
\rm
 Let $\nu : \mathcal{C}\rightarrow [0, \infty)$ be a set function with $
\nu(\emptyset) = 0$, \mbox{$H:S\rightarrow L(\mathbb{R}_{0}^{+})$} a multifunction and a sequence of interval-valued multifunctions
$H_{n}:S \to L(\mathbb{R}_{0}^{+})$,  for every $n\in \mathbb{N}$.
It is said that:
\begin{description}
\item[(\ref{D11}.i)] $(H_{n})_n$ converges $\nu$-almost everywhere to $H$ on $S$ ( $H_{n}\overset{\nu-a.e.}{\longrightarrow} H$)
 if there exists $B\in \mathcal{C}$ with $\nu(B) = 0$ and $\lim\limits_{n\rightarrow \infty}d_H(H_{n}(s), H(s))=0,$ for every
 $ s\in S\setminus B.$
\item[(\ref{D11}.ii)] $(H_{n})$ $\nu$-converges to $H$ on $S$ ( $H_{n}\overset{\nu}{\longrightarrow} H$)
 if for every $\delta>0$,
 $B_{n}(\delta)= \{s\in S; d_H(H_{n}(s), H(s))\geq \delta\}\in \mathcal{C}$ and
$\lim\limits_{n\to \infty}\nu(B_{n}(\delta))=0$.
\end{description}
\end{definition}
\begin{theorem}
\mbox{\rm (\cite[Theorem 11]{cgis2022})}
  Let $\Gamma:\mathcal{C}\to L(\mathbb{R}_{0}^{+})$, $\G= [\nu_1, \nu_2],$ so that $\nu_{2}$ is of finite variation.
  Let $H=[h_{1}, h_{2}],\, H_{n}=[h_{1}^{(n)}, h_{2}^{(n)}]
:S\to L(\mathbb{R}_{0}^{+})$
%as in (\ref{H})
 be  multifunctions such that
$\sup \{ h_{2}(s), h_{2}^{(n)} (s), s \in S, \, n \in \mathbb{N} \} < +\infty$
and $H_{n} \overset{\widetilde{\Gamma}}{\rightarrow} H$. Then
  \[
\lim\limits_{n\rightarrow \infty}
d_H \Big(
{\scriptstyle (RL)}\int_{S} H_{n} \, \mathrm{d}\Gamma,
{\scriptstyle (RL)}\int_{S} H \, \mathrm{d}\Gamma \Big)=0.
\]
\end{theorem}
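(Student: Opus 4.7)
The plan is to reduce the interval-valued statement to two applications of the scalar convergence result cited as \cite[Theorem 6]{cgis2022} (which appears in the middle bullet of the convergence theorem preceding this statement), via the componentwise characterisation of Theorem~\ref{4,3}.

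First, I would establish that all the integrals in sight are well defined. Let $M<+\infty$ be the common upper bound on $h_2$ and $h_2^{(n)}$. Since $0\le h_1\le h_2$ and $0\le h_1^{(n)}\le h_2^{(n)}$, the scalar functions $h_1,h_2,h_1^{(n)},h_2^{(n)}$ are all bounded by $M$. As $\nu_2$ is of finite variation and $\nu_1\le \nu_2$, both $\nu_1$ and $\nu_2$ are of finite variation. Theorem~\ref{upper-bound}.a) then gives $h_i,h_i^{(n)}\in |RL|^1_{\nu_i}(\mathbb{R})$ for $i=1,2$, and Theorem~\ref{4,3} gives
\[
{\scriptstyle (RL)}\!\int_S H\,\mathrm{d}\Gamma=\Big[{\scriptstyle (RL)}\!\int_S h_1\,\mathrm{d}\nu_1,\,{\scriptstyle (RL)}\!\int_S h_2\,\mathrm{d}\nu_2\Big],
\]
and similarly for $H_n$.

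Next, I would transfer the convergence in $\widetilde{\Gamma}$-measure of $H_n$ to $H$ to scalar convergence in $\widetilde{\nu_i}$-measure of $h_i^{(n)}$ to $h_i$ ($i=1,2$). Because $H,H_n$ take values in $L(\mathbb{R}_0^+)$ with left endpoints $h_1,h_1^{(n)}$ and right endpoints $h_2,h_2^{(n)}$, one has
\[
d_H(H_n(s),H(s))=\max\bigl\{|h_1^{(n)}(s)-h_1(s)|,\,|h_2^{(n)}(s)-h_2(s)|\bigr\},
\]
so for every $\delta>0$ and every $i\in\{1,2\}$
\[
\bigl\{s\in S:|h_i^{(n)}(s)-h_i(s)|\ge \delta\bigr\}\;\subseteq\;\bigl\{s\in S:d_H(H_n(s),H(s))\ge \delta\bigr\}.
\]
Since $\nu_1\le \nu_2$ forces $\overline{\nu_1}\le \overline{\nu_2}$ and consequently $\widetilde{\nu_1}\le \widetilde{\nu_2}=\widetilde{\Gamma}$, monotonicity of the semivariation on the above inclusion yields $h_i^{(n)}\overset{\widetilde{\nu_i}}{\longrightarrow} h_i$ for $i=1,2$.

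Now I apply the scalar convergence theorem \cite[Theorem 6]{cgis2022} separately to the pairs $(h_1^{(n)},h_1)$ against $\nu_1$ and $(h_2^{(n)},h_2)$ against $\nu_2$: in each case the uniform bound $M$ and the convergence in $\widetilde{\nu_i}$-measure give
\[
\lim_{n\to\infty}{\scriptstyle (RL)}\!\int_S h_i^{(n)}\,\mathrm{d}\nu_i={\scriptstyle (RL)}\!\int_S h_i\,\mathrm{d}\nu_i,\qquad i=1,2.
\]
Using Theorem~\ref{4,3} again and the formula $d_H([r,s],[x,y])=\max\{|x-r|,|y-s|\}$ recalled in Section~4, we conclude
\[
d_H\Bigl({\scriptstyle (RL)}\!\int_S H_n\,\mathrm{d}\Gamma,\,{\scriptstyle (RL)}\!\int_S H\,\mathrm{d}\Gamma\Bigr)=\max_{i=1,2}\Bigl|{\scriptstyle (RL)}\!\int_S h_i^{(n)}\,\mathrm{d}\nu_i-{\scriptstyle (RL)}\!\int_S h_i\,\mathrm{d}\nu_i\Bigr|\longrightarrow 0,
\]
as required. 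The only non-routine step is the identification $\widetilde{\Gamma}=\widetilde{\nu_2}$ and the comparison $\widetilde{\nu_1}\le \widetilde{\Gamma}$ used to import the scalar convergence theorem; once the endpoint functions are shown to converge in their respective semivariation measures, the rest is essentially bookkeeping via Theorem~\ref{4,3}.
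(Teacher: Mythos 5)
Your proof is correct and follows the route the survey itself sets up: the survey states this theorem without proof, but its componentwise characterisation (Theorem~\ref{4,3}), the identity $\overline{\Gamma}=\overline{\nu_2}$ (hence $\widetilde{\Gamma}=\widetilde{\nu_2}$) implicit in the definition of variation via $\|\Gamma(A_i)\|_{\mathcal H}=\nu_2(A_i)$, and the scalar convergence-in-semivariation result \cite[Theorem 6]{cgis2022} are exactly the intended ingredients. Your reduction of $d_H$-convergence in measure to the two endpoint convergences, and the comparison $\widetilde{\nu_1}\le\widetilde{\nu_2}$, are both sound, so nothing is missing.
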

\begin{theorem}\label{4.13}
\mbox{\rm (\cite[Theorem 12]{cgis2022})}
  Suppose $\nu:\mathcal{C}\to [0, \infty)$ is monotone, of finite variation  and $\widetilde{\nu}$ satisfies \mbox{\rm \bf (E)}.
  Let $H=[h_{1}, h_{2}],\, H_{n}=[h_{1}^{(n)}, h_{2}^{(n)}]
:S\to L(\mathbb{R}_{0}^{+})$
%as in (\ref{H})
 be  multifunctions such that
$\sup \{ h_{2}(s), h_{2}^{(n)} (s), s \in S, \, n \in \mathbb{N} \} < +\infty$ and $H_{n} \overset{\nu-ae}{\rightarrow} H $, then
  \[\lim\limits_{n\rightarrow \infty} d_H \Big(
{\scriptstyle (RL)}\int_{S} H_{n}\, \mathrm{d}\nu,
{\scriptstyle (RL)}\int_{S} H\, \mathrm{d}\nu \Big)=0.\]
\end{theorem}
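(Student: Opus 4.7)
The plan is to reduce the interval-valued assertion to the corresponding scalar $RL$-convergence result from Section~3.3 (the third bulleted theorem, which uses condition \textbf{(E)} together with $\nu$-almost-everywhere convergence of a uniformly bounded sequence) via the characterization in Theorem~\ref{4,3}. The key structural observation is that for ordered intervals
\[
d_H([a_1,a_2],[b_1,b_2]) = \max\bigl\{|a_1-b_1|,\, |a_2-b_2|\bigr\},
\]
so the hypothesis $H_{n}\overset{\nu-ae}{\rightarrow} H$ (in the sense of Definition~\ref{D11}, part (i)) is equivalent to the simultaneous scalar convergences $h_{i}^{(n)}\overset{\nu-ae}{\longrightarrow} h_{i}$ for $i=1,2$ on a common exceptional null set.

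Next I would transfer the uniform boundedness hypothesis to the lower endpoints. By assumption $\sup\{h_2(s), h_2^{(n)}(s):s\in S, n\in \mathbb{N}\}<+\infty$, and since pointwise $0\leq h_1\leq h_2$ and $0\leq h_1^{(n)}\leq h_2^{(n)}$, both pairs $(h_{1},h_{1}^{(n)})$ and $(h_{2},h_{2}^{(n)})$ are uniformly bounded. Applying the scalar convergence theorem from Section~3.3 twice — once to each endpoint sequence — yields
\[
\lim_{n\to\infty}{\scriptstyle (|RL|)}\int_{S}h_{i}^{(n)}\,\mathrm{d}\nu \;=\; {\scriptstyle (|RL|)}\int_{S}h_{i}\,\mathrm{d}\nu,\qquad i=1,2.
\]

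Finally, Theorem~\ref{4,3} identifies the interval-valued integrals as
\[
{\scriptstyle (RL)}\!\int_{S}H_{n}\,\mathrm{d}\nu=\Bigl[\textstyle\int_S h_{1}^{(n)}\,\mathrm{d}\nu,\, \int_S h_{2}^{(n)}\,\mathrm{d}\nu\Bigr], \qquad
{\scriptstyle (RL)}\!\int_{S}H\,\mathrm{d}\nu=\Bigl[\textstyle\int_S h_{1}\,\mathrm{d}\nu,\, \int_S h_{2}\,\mathrm{d}\nu\Bigr],
\]
and combining this with the explicit Hausdorff-distance formula recalled in the first paragraph gives
\[
d_{H}\Bigl({\scriptstyle (RL)}\!\int_{S}H_{n}\,\mathrm{d}\nu,\,{\scriptstyle (RL)}\!\int_{S}H\,\mathrm{d}\nu\Bigr)
=\max_{i=1,2}\Bigl|\textstyle\int_S h_{i}^{(n)}\,\mathrm{d}\nu-\int_S h_{i}\,\mathrm{d}\nu\Bigr|\longrightarrow 0.
\]

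The main obstacle is really a bookkeeping verification: confirming that each of the hypotheses of the invoked scalar theorem (monotonicity of $\nu$, finite variation, property \textbf{(E)} for $\widetilde{\nu}$, uniform boundedness, and $\nu$-ae convergence) is genuinely inherited by both endpoint sequences. Once this reduction is in place, the componentwise assembly through Theorem~\ref{4,3} and the two-point Hausdorff distance formula is entirely mechanical and produces the claimed convergence.
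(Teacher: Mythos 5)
Your reduction is correct: the paper itself omits a proof of this theorem (it only cites the source), but your componentwise strategy --- split $H_n\overset{\nu\text{-ae}}{\rightarrow}H$ into $h_i^{(n)}\overset{\nu\text{-ae}}{\rightarrow}h_i$ via $d_H([a_1,a_2],[b_1,b_2])=\max\{|a_1-b_1|,|a_2-b_2|\}$, transfer the uniform bound from $h_2^{(n)}$ to $h_1^{(n)}$ using $0\leq h_1^{(n)}\leq h_2^{(n)}$, apply the scalar a.e.-convergence theorem of Section~3.3 to each endpoint, and reassemble with Theorem~\ref{4,3} --- is exactly the technique the paper uses for the adjacent interval-valued results (e.g.\ the proofs of Theorem~\ref{4,3} and Theorem~\ref{varying}). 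No gaps; the hypotheses of the scalar theorem (monotonicity, finite variation, condition \textbf{(E)} for $\widetilde{\nu}$, uniform boundedness, a.e.\ convergence off a common null set in $\mathcal{C}$) are all verified as you indicate.
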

\begin{theorem} \label{4.14}
\mbox{\rm (\cite[Theorem 13]{cgis2022})}
  Let $\Gamma:=[\nu_1, \nu_2] :\mathcal{C}\to L(\mathbb{R}_{0}^{+})$  with $\nu_{1}$, $\nu_{2}$ monotone set functions
   satisfying \mbox{\rm \bf (E) } and $\nu_{2}$ of finite variation.\\  Let $H=[h_{1}, h_{2}],\, H_{n}=[h_{1}^{(n)}, h_{2}^{(n)}]
:S\to L(\mathbb{R}_{0}^{+})$
%as in (\ref{H})
 be  multifunctions such that
$\sup \{ h_{2}(s), h_{2}^{(n)} (s), s \in S, \, n \in \mathbb{N} \} < +\infty$
  and $H_{n} \overset{\widetilde{\Gamma}-ae}{\rightarrow} H $, then
  \[\lim\limits_{n\rightarrow \infty}
d_H \Big(
{\scriptstyle (RL)}\int_{S} H_{n}\, \mathrm{d}\Gamma,
{\scriptstyle (RL)}\int_{S} H\, \mathrm{d}\Gamma \Big)=0.\]
\end{theorem}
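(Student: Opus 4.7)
My plan is to reduce the interval-valued convergence to two scalar convergences at the endpoints and then assemble the conclusion via the explicit Hausdorff distance formula for intervals. I expect the only non-routine step to be the transfer of the $\widetilde{\Gamma}$-a.e.\ hypothesis to each component $\nu_i$.

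First I would apply Theorem \ref{4,3}: since $\nu_2$ has finite variation and $\sup_{s,n}\{h_2(s),h_2^{(n)}(s)\}<+\infty$, Theorem \ref{upper-bound} guarantees that $h_2, h_2^{(n)} \in |RL|^1_{\nu_2}(\mathbb{R})$; the same argument applied to $\nu_1\le\nu_2$ (which also has finite variation since $\overline{\nu_1}\le\overline{\nu_2}$) gives $h_1, h_1^{(n)} \in |RL|^1_{\nu_1}(\mathbb{R})$. Theorem \ref{4,3} then yields
\begin{align*}
{\scriptstyle(RL)}\int_S H_n\,\mathrm{d}\Gamma &= \Big[{\scriptstyle(RL)}\int_S h_1^{(n)}\,\mathrm{d}\nu_1,\ {\scriptstyle(RL)}\int_S h_2^{(n)}\,\mathrm{d}\nu_2\Big], \\
{\scriptstyle(RL)}\int_S H\,\mathrm{d}\Gamma &= \Big[{\scriptstyle(RL)}\int_S h_1\,\mathrm{d}\nu_1,\ {\scriptstyle(RL)}\int_S h_2\,\mathrm{d}\nu_2\Big].
\end{align*}
Since these intervals live in $L(\mathbb{R}_0^+)$, the formula $d_H([r,s],[x,y])=\max(|x-r|,|y-s|)$ reduces the claim to proving ${\scriptstyle(RL)}\int_S h_i^{(n)}\,\mathrm{d}\nu_i \to {\scriptstyle(RL)}\int_S h_i\,\mathrm{d}\nu_i$ for each $i\in\{1,2\}$.

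Next I would descend the $\widetilde{\Gamma}$-a.e.\ convergence to each component. Because $\|\Gamma(A)\|_{\mathcal{H}}=\nu_2(A)$, the variation $\overline{\Gamma}$ coincides with $\overline{\nu_2}$ and therefore $\widetilde{\Gamma}=\widetilde{\nu_2}$; monotonicity $\nu_1\le\nu_2$ gives $\widetilde{\nu_1}\le\widetilde{\nu_2}$. Hence any $B\in\mathcal{C}$ witnessing $\widetilde{\Gamma}(B)=0$ also satisfies $\widetilde{\nu_i}(B)=0$ for $i=1,2$. On $S\setminus B$, the pointwise Hausdorff convergence $d_H(H_n(s),H(s))\to 0$ specialises via $d_H([h_1^{(n)}(s),h_2^{(n)}(s)],[h_1(s),h_2(s)])=\max_i|h_i^{(n)}(s)-h_i(s)|$ to $h_i^{(n)}(s)\to h_i(s)$ for $i=1,2$, that is $h_i^{(n)}\overset{\nu_i\text{-a.e.}}{\longrightarrow}h_i$.

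Finally I would invoke the scalar convergence theorem of \cite[Theorem 8]{cgis2022} (quoted earlier in this chapter) for each pair $(\nu_i,h_i^{(n)},h_i)$: the hypotheses match, since $\nu_i$ is monotone of finite variation, $\widetilde{\nu_i}$ satisfies \textbf{(E)} by assumption, the families $(h_i^{(n)})$ and $(h_i)$ are uniformly dominated by $\sup_{s,n}\{h_2(s),h_2^{(n)}(s)\}<+\infty$, and the $\nu_i$-a.e.\ convergence has just been established. This produces ${\scriptstyle(RL)}\int_S h_i^{(n)}\,\mathrm{d}\nu_i\to {\scriptstyle(RL)}\int_S h_i\,\mathrm{d}\nu_i$ for both indices, and taking the maximum gives the desired vanishing of the Hausdorff distance. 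The only delicate point is the identification $\widetilde{\Gamma}=\widetilde{\nu_2}$ and the subsequent control $\widetilde{\nu_1}(B)=0$; the remaining steps are a componentwise application of already-established scalar results.
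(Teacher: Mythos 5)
The survey does not actually reproduce a proof of this theorem (it only cites \cite[Theorem 13]{cgis2022}), so there is nothing internal to compare against; but your strategy is the natural and almost certainly the intended one, namely the componentwise reduction through Theorem \ref{4,3}, the formula $d_H([r,s],[x,y])=\max\{|x-r|,|y-s|\}$, and the scalar a.e.-convergence theorem. The supporting steps are sound: $\nu_1\le\nu_2$ gives $\overline{\nu_1}\le\overline{\nu_2}$, hence $\nu_1$ is of finite variation and all four families of endpoint functions are $|RL|$-integrable by Theorem \ref{upper-bound}; and since $\|[r,s]\|_{\mathcal H}=s$ on $L(\mathbb{R}_0^+)$ one indeed gets $\overline{\Gamma}=\overline{\nu_2}$, so an exceptional set $B\in\mathcal{C}$ with $\widetilde{\Gamma}(B)=0$ satisfies $\overline{\nu_2}(B)=0$, hence $\nu_i(B)=0$ for $i=1,2$, which is exactly the $\nu_i$-a.e.\ convergence needed.

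The one point you should not wave through is the transfer of condition \textbf{(E)}. The theorem as stated imposes \textbf{(E)} on the set functions $\nu_1,\nu_2$ themselves, whereas the scalar result you invoke (\cite[Theorem 8]{cgis2022}, quoted in Subsection 3.3) requires \textbf{(E)} for the semivariations $\widetilde{\nu_i}$. You assert that ``$\widetilde{\nu_i}$ satisfies \textbf{(E)} by assumption,'' which misquotes the hypothesis: for a non-additive $\nu$ the set function $\widetilde{\nu}$ (which on $\mathcal{C}$ equals $\overline{\nu}$) is in general quite different from $\nu$, and \textbf{(E)} does not obviously pass from one to the other. To close the argument you should either read the hypothesis as imposing \textbf{(E)} on $\widetilde{\nu_1},\widetilde{\nu_2}$ (consistent with the formulation of Theorem \ref{4.13}, and presumably with the original statement in \cite{cgis2022}), or supply an argument that \textbf{(E)} for $\nu_i$ implies it for $\widetilde{\nu_i}$ under the standing monotonicity and finite-variation assumptions. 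Apart from this hypothesis-matching issue, the proof is complete.
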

A Fatou type theorem for sequences of RL integrable interval-valued multifunctions holds.
\begin{theorem} \label{4.15}
\mbox{\rm (\cite[Theorem 14]{cgis2022})}
  Suppose $\nu:\mathcal{C}\to [0, \infty)$ is monotone with $0<\overline{\nu}(S)<\infty$ and $\widetilde{\nu}$ satisfies \mbox{\rm \bf (E)}. For every $n\in \mathbb{N}$, let
   $H_{n}=[h_{1}^{(n)}, h_{2}^{(n)}]$ be such that $(h_{2}^{(n)})_{n}$ is uniformly bounded. Then
  $$
{\scriptstyle (RL)}\int_{S} (\liminf_{n} H_{n})\, \mathrm{d}\nu \preceq
{\scriptstyle (RL)}\liminf_{n} \int_{S} H_{n}\, \mathrm{d}\nu.$$
\end{theorem}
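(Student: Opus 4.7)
The plan is to reduce the interval-valued Fatou inequality to the scalar Fatou theorem (Theorem \ref{quasif}) by combining the endpointwise decomposition of the RL integral (Theorem \ref{4,3}) with the componentwise $\liminf$ of intervals given by (viii). Throughout, we view the scalar $\nu$ as the degenerate interval-valued measure $\Gamma = [\nu,\nu]$, so that Theorem \ref{4,3} is applicable.

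First I would verify that each component sequence satisfies the hypotheses of Theorem \ref{quasif}. Since $h_1^{(n)}(s)\leq h_2^{(n)}(s)$ for every $s$ and every $n$, and $(h_2^{(n)})_n$ is uniformly bounded, the lower-endpoint sequence $(h_1^{(n)})_n$ is uniformly bounded too (trapped between $0$ and the common bound). Moreover, $0<\overline{\nu}(S)<\infty$ gives that $\nu$ is of finite variation, and the hypotheses that $\nu$ is monotone and $\widetilde{\nu}$ satisfies \textbf{(E)} are transcribed directly from the statement into the hypotheses of Theorem \ref{quasif}. Thus Theorem \ref{quasif} applies separately to $(h_1^{(n)})_n$ and $(h_2^{(n)})_n$.

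Next I would use Theorem \ref{4,3} to write, for every $n$,
$$ {\scriptstyle (RL)}\int_S H_n\,\mathrm{d}\nu = \left[\,{\scriptstyle (RL)}\int_S h_1^{(n)}\,\mathrm{d}\nu,\; {\scriptstyle (RL)}\int_S h_2^{(n)}\,\mathrm{d}\nu\,\right]. $$
By definition (viii), $\liminf_n H_n = [\liminf_n h_1^{(n)},\, \liminf_n h_2^{(n)}]$, which is a legitimate element of $L(\mathbb{R}_0^+)$ because $h_1^{(n)}\leq h_2^{(n)}$ propagates to the $\liminf$'s. A second application of Theorem \ref{4,3} gives
$$ {\scriptstyle (RL)}\int_S (\liminf_n H_n)\,\mathrm{d}\nu = \left[\,{\scriptstyle (RL)}\int_S \liminf_n h_1^{(n)}\,\mathrm{d}\nu,\; {\scriptstyle (RL)}\int_S \liminf_n h_2^{(n)}\,\mathrm{d}\nu\,\right], $$
while (viii) applied to the sequence of scalar integrals yields the analogous componentwise expression for $\liminf_n {\scriptstyle (RL)}\int_S H_n\,\mathrm{d}\nu$.

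Finally, Theorem \ref{quasif} applied to the two scalar sequences delivers
$$ {\scriptstyle (RL)}\int_S \liminf_n h_i^{(n)}\,\mathrm{d}\nu \;\leq\; \liminf_n {\scriptstyle (RL)}\int_S h_i^{(n)}\,\mathrm{d}\nu, \qquad i=1,2, $$
and by the weak interval order (iii) these two real inequalities are exactly the relation $\preceq$ claimed in the statement. The main obstacle is really just bookkeeping: checking that the uniform bound on the upper endpoints transfers to the lower endpoints and that the $\liminf$ of intervals remains an admissible multifunction with bounded endpoints so that Theorems \ref{4,3} and \ref{quasif} can both be invoked; no genuinely new analytic input beyond the scalar Fatou theorem is required.
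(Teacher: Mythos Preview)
Your proposal is correct and follows the natural route: decompose via Theorem~\ref{4,3} into the two scalar endpoint sequences, apply the scalar Fatou result (Theorem~\ref{quasif}) to each, and reassemble using the componentwise definition (viii) of $\liminf$ together with the weak interval order (iii). Note, however, that the present survey does not actually supply a proof of Theorem~\ref{4.15}; it merely records the statement and cites \cite[Theorem~14]{cgis2022}. So there is no in-paper proof to compare against, but your argument is exactly the reduction one would expect the original source to use, and all the bookkeeping checks (uniform boundedness of $(h_1^{(n)})_n$ inherited from $(h_2^{(n)})_n$, boundedness and hence RL-integrability of the $\liminf$ endpoints via Theorem~\ref{upper-bound}) are handled correctly.
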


In the sequel, some Lebesgue type theorems are presented.
\begin{theorem} {\rm (Monotone Convergence, \cite[Proposition 1]{cgis2021})}
Suppose $\Gamma = [\nu_1,\nu_2]$ with $\nu_i \in \mathscr{M}(S), \,\, i\in\{1,2\}$ of finite variation.
For every $n\in \mathbb{N}$, let $H_n =[h_1^{(n)}, h_2^{(n)}]$ be
%an interval-valued
a multifunction
%as in (\ref{H}),
such that  $(h_{2}^{(n)})$ is uniformly bounded and $H_n \preceq H_{n+1}\,$ for every $n \in \mathbb{N}$. Then
\[{\scriptstyle (RL)}\int_S \bigvee_n  H_n \, \mathrm{d}\Gamma =
\bigvee_n {\scriptstyle (RL)}\int_S H_n \, \mathrm{d}\Gamma.\]
\end{theorem}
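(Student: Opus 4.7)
The plan is to reduce the interval-valued statement to two parallel scalar monotone convergence statements via the characterization in Theorem~\ref{4,3}, and then argue the scalar version using the tools developed earlier in Section~\ref{RLint}.

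First, I would apply Theorem~\ref{4,3} to each $H_n$: writing $I_i^{(n)} := {\scriptstyle (RL)}\int_S h_i^{(n)}\,\mathrm{d}\nu_i$ for $i=1,2$, we have
\[
{\scriptstyle (RL)}\int_S H_n\,\mathrm{d}\Gamma = [I_1^{(n)},\, I_2^{(n)}].
\]
Because $H_n\preceq H_{n+1}$ means componentwise $h_1^{(n)}\leq h_1^{(n+1)}$ and $h_2^{(n)}\leq h_2^{(n+1)}$, Theorem~\ref{monotonicity}(\ref{monotonicity}.a) gives that each sequence $(I_i^{(n)})_n$ is non-decreasing. The uniform bound on $(h_2^{(n)})$ (which dominates $(h_1^{(n)})$ since $h_1^{(n)}\leq h_2^{(n)}$) combined with Theorem~\ref{upper-bound}(\ref{upper-bound}.a) forces $I_i^{(n)}\leq M\,\overline{\nu_i}(S)<\infty$, so the limits $L_i:=\sup_n I_i^{(n)}=\lim_n I_i^{(n)}$ exist. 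By convention (vii) of Section~4 applied pointwise, $\bigvee_n H_n = [h_1^*, h_2^*]$ with $h_i^*(s):=\sup_n h_i^{(n)}(s)=\lim_n h_i^{(n)}(s)$, and likewise $\bigvee_n{\scriptstyle (RL)}\int_S H_n\,\mathrm{d}\Gamma = [L_1, L_2]$.

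Hence the theorem is equivalent to the scalar assertion: for each $i\in\{1,2\}$, $h_i^*\in RL^1_{\nu_i}(\mathbb{R})$ and ${\scriptstyle (RL)}\int_S h_i^*\,\mathrm{d}\nu_i = L_i$. For the easy direction, $h_i^{(n)}\leq h_i^*$ together with Theorem~\ref{monotonicity}(\ref{monotonicity}.a) (once $h_i^*$ is known to be integrable) yields $L_i\leq {\scriptstyle (RL)}\int_S h_i^*\,\mathrm{d}\nu_i$. To establish the reverse and the integrability of $h_i^*$ simultaneously, I would argue directly from the definition: given $\varepsilon>0$, first pick $N$ with $L_i - I_i^{(N)}<\varepsilon/3$; then pick a countable partition $P_\varepsilon$ witnessing the $RL$ $\nu_i$-integrability of $h_i^{(N)}$ up to error $\varepsilon/3$. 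For any finer tagged partition $P=\{(A_k, s_k)\}$, the sum $\sum_k h_i^*(s_k)\nu_i(A_k)$ lies between the analogous sum for $h_i^{(N)}$ (hence within $\varepsilon/3$ of $I_i^{(N)}$, and so within $2\varepsilon/3$ of $L_i$ from below) and a comparable upper estimate obtained by applying the same procedure to a larger index $n\geq N$, using the uniform bound on $(h_2^{(n)})$ to dominate the tail via Theorem~\ref{upper-bound}(\ref{upper-bound}.a) on the set where $h_i^*-h_i^{(n)}$ remains large.

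The main obstacle is the upper estimate: controlling $\sum_k(h_i^*(s_k)-h_i^{(n)}(s_k))\nu_i(A_k)$ without having a Lebesgue-type dominated convergence available, since none of the convergence theorems in Section~\ref{RLint} is directly applicable (we assume neither that $\widetilde{\nu_i}$ satisfies~\textbf{(E)} nor convergence in measure). Monotonicity of the sequence helps, since on any partition set $A_k$ the difference $h_i^*-h_i^{(n)}$ is non-negative and decreases to $0$; refining the partition $P_\varepsilon$ to separate the sets where $h_i^*-h_i^{(n)}$ exceeds $\varepsilon/(3\overline{\nu_i}(S))$ from its complement, and exploiting superadditivity of $\overline{\nu_i}$ from Remark~\ref{rem-var}, should let us bound the remainder by $\varepsilon/3$ for $n$ large enough. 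Once $h_i^*$ is shown to satisfy the $RL$ Cauchy condition with limit $L_i$, the uniqueness of the integral closes the argument, and reassembling the two components produces the interval identity $\int_S\bigvee_n H_n\,\mathrm{d}\Gamma = \bigvee_n\int_S H_n\,\mathrm{d}\Gamma$.
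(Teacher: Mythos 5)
The paper itself offers no proof of this theorem (it is only cited from \cite[Proposition 1]{cgis2021}), so your argument must stand alone. Your reduction to two scalar statements via Theorem \ref{4,3} and convention (vii), the monotonicity of the sequences $I_i^{(n)}$, and the easy inequality $L_i\leq {\scriptstyle (RL)}\int_S h_i^*\,\mathrm{d}\nu_i$ are all sound; note that the integrability of $h_i^*$ comes for free from Theorem \ref{upper-bound}(\ref{upper-bound}.a), since $h_i^*$ is bounded by the uniform bound on $(h_2^{(n)})_n$, so there is no need to re-derive it through a Cauchy argument.

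The genuine gap is in the upper estimate, exactly where you locate it, and the mechanism you sketch does not close it. First, the quantifiers are in the wrong order: you fix a partition $P$ finer than a witness for $h_i^{(N)}$ only, and then want to let $n\geq N$ grow; but then $\sum_k h_i^{(n)}(s_k)\nu_i(A_k)$ cannot be compared with $I_i^{(n)}$, since $P$ need not refine a witnessing partition for $h_i^{(n)}$, and a common refinement of countably many countable partitions is in general uncountable. Second, bounding the remainder by (a multiple of) $\overline{\nu_i}(E_n)$, where $E_n=\{s: h_i^*(s)-h_i^{(n)}(s)\geq \delta\}$, requires $\overline{\nu_i}(E_n)\to 0$; although $E_n\searrow\emptyset$, this fails without order-continuity or condition \textbf{(E)}, neither of which is assumed. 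For instance, if $\nu$ is the $\{0,1\}$-valued finitely additive measure on $\mathcal{P}(\mathbb{N})$ induced by a free ultrafilter and $h^{(n)}=\chi_{\{0,\dots,n\}}$, then $\overline{\nu}(E_n)=1$ for every $n$, yet the conclusion of the theorem still holds (all integrals vanish). The repair is to exploit the countable-partition structure of the RL integral rather than the variation of a single shrinking set: fix $\delta$, form the countable layer partition $\{E_0^c\}\cup\{E_m\setminus E_{m+1}\}_{m\in\mathbb{N}}$, observe that $\sum_m \overline{\nu_i}(E_m\setminus E_{m+1})\leq\overline{\nu_i}(S)<\infty$ by the superadditivity of Remark \ref{rem-var}, choose $n_0$ depending only on $\varepsilon$ so that both $L_i-I_i^{(n_0)}<\varepsilon/3$ and the tail $M\sum_{m\geq n_0}\overline{\nu_i}(E_m\setminus E_{m+1})$ is small, and only afterwards take $P_\varepsilon$ refining simultaneously the layers and a witnessing partition for $h_i^{(n_0)}$. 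With the choices made in that order, every $A_k$ of a finer partition lies either in $E_{n_0}^c$ (where $h_i^*-h_i^{(n_0)}<\delta$) or in some layer $E_m\setminus E_{m+1}$ with $m\geq n_0$ (whose total contribution is controlled by the tail of the convergent series), and your $\varepsilon/3$ scheme goes through.
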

It holds also a convergence type theorem for varying multisubmeasures.

\begin{theorem}\label{varying}
\mbox{\rm (\cite[Theorem 4.2]{cgis2021})}
Let $(H_n)_n:=([h_1^{(n)}, h_2^{(n)}])_n$ be a sequence of bounded
multifunctions,
and $(\Gamma_{n})_n:=([\nu^{(n)}_{1},\nu^{(n)}_{2}])_n$ a sequence of
multisubmeasures.
Suppose there exist  an interval-valued multisubmeasure $\Gamma := [\nu_1,\nu_2]$, with $\nu_2$
 of finite variation, and a bounded   multifunction $H :=[h_1,h_2]$
such that:
\begin{description}
\item[(\ref{varying}.a)] $H_{n} \preceq H_{n+1}$ for every $n \in \mathbb{N}$ and  $d_H (H_n, H) \to 0$ uniformly on $S$,
\item[(\ref{varying}.b)]  $\Gamma_n \preceq \Gamma_{n+1} \preceq \Gamma $ for every $n \in \mathbb{N}$  and $(\Gamma_n)_n$  setwise  converges  to  $\Gamma$
(namely $\lim_n \Gamma_n(A) = \Gamma(A)$ for every $A \in \mathcal{C}$).
\end{description}
Then
\[ \lim_{n \to \infty} d_H \Big(
{\scriptstyle (RL)}\int_{S}H_{n}\, \mathrm{d}\Gamma_n ,
{\scriptstyle (RL)}\int_{S} H \, \mathrm{d}\Gamma \Big) = 0.\]
\end{theorem}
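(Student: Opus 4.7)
The plan is to reduce the interval-valued convergence to two scalar convergences via Theorem \ref{4,3}, and then to control each scalar difference by a triangle inequality that isolates the uniform convergence $H_n\to H$ from the setwise convergence $\Gamma_n\to\Gamma$.

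First I would observe that $H$ and each $H_n$ are bounded, and all the submeasures $\nu_i^{(n)},\nu_i$ ($i=1,2$) are of finite variation (indeed $\nu_i^{(n)}\le \nu_i\le \nu_2$ gives $\overline{\nu_i^{(n)}}(S)\le \overline{\nu_2}(S)<\infty$ by Remark \ref{rem-var}). So Theorem \ref{upper-bound}.a guarantees that all the scalar $RL$ integrals involved exist, and by Theorem \ref{4,3}
\[
d_H\Big({\scriptstyle (RL)}\int_S H_n\,\mathrm{d}\Gamma_n,\ {\scriptstyle (RL)}\int_S H\,\mathrm{d}\Gamma\Big) \ =\ \max_{i=1,2}\,\Big|{\scriptstyle (RL)}\int_S h_i^{(n)}\,\mathrm{d}\nu_i^{(n)} - {\scriptstyle (RL)}\int_S h_i\,\mathrm{d}\nu_i\Big|.
\]
Hence it suffices to prove, for each $i\in\{1,2\}$, that the scalar difference tends to $0$. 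Using linearity (Theorem \ref{3.1}.c), the triangle inequality dominates this difference by $(I)_n + (II)_n$, where
\[
(I)_n := \Big|{\scriptstyle (RL)}\int_S (h_i^{(n)}-h_i)\,\mathrm{d}\nu_i^{(n)}\Big|,\qquad (II)_n := \Big|{\scriptstyle (RL)}\int_S h_i\,\mathrm{d}\nu_i^{(n)} - {\scriptstyle (RL)}\int_S h_i\,\mathrm{d}\nu_i\Big|.
\]

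For $(I)_n$ I would apply Theorem \ref{upper-bound}.d:
\[
(I)_n \ \le\ \sup_{s\in S}|h_i^{(n)}(s)-h_i(s)|\cdot \overline{\nu_i^{(n)}}(S) \ \le\ \sup_{s\in S} d_H(H_n(s),H(s))\cdot \overline{\nu_2}(S),
\]
which tends to $0$ by hypothesis (\ref{varying}.a). For $(II)_n$, since $\Gamma_n\preceq\Gamma$ yields $\nu_i-\nu_i^{(n)}\ge 0$ of finite variation, the bounded function $h_i$ lies in $|RL|^1_{\nu_i-\nu_i^{(n)}}(\mathbb{R})$ by Theorem \ref{upper-bound}.a. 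Decomposing $\nu_i = \nu_i^{(n)}+(\nu_i-\nu_i^{(n)})$ and invoking additivity in the set function from Theorem \ref{3.1}.e, followed by Theorem \ref{upper-bound}.a once more, gives
\[
(II)_n \ =\ \Big|{\scriptstyle (RL)}\int_S h_i\,\mathrm{d}(\nu_i-\nu_i^{(n)})\Big| \ \le\ \sup_{s\in S} h_i(s)\cdot \overline{\nu_i-\nu_i^{(n)}}(S),
\]
and the right-hand side tends to $0$ by (\ref{varying}.b), read in the strong form of Definition \ref{forte} specialized to $A=S$.

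The main obstacle is the proper reading of hypothesis (\ref{varying}.b): the argument above requires the uniform variation bound $\overline{\nu_i^{(n)}-\nu_i}(S)\to 0$ from Definition \ref{forte}, which in the non-additive context is strictly stronger than the pointwise formulation $\Gamma_n(A)\to\Gamma(A)$ given parenthetically in the statement. If only the pointwise form were available, one would have to rework $(II)_n$ by approximating each scalar $RL$ integral via a single countable partition that is simultaneously a common refinement of $RL$-good partitions for $\nu_i$ and for every $\nu_i^{(n)}$ (which exists as the common refinement of countably many countable partitions), then splitting the resulting series into a finite initial block handled by pointwise convergence of $\nu_i^{(n)}\to\nu_i$ and a tail controlled by the bound $\sum_j h_i(s_j)\nu_i(A_j)\le \sup h_i\cdot \overline{\nu_i}(S)<\infty$. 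The monotonicity $\Gamma_n\preceq\Gamma_{n+1}$ from (\ref{varying}.b) is exactly what makes such a bootstrap feasible.
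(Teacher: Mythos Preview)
Your proof is correct and follows essentially the same route as the paper's: both split via a triangle inequality into a ``change of integrand'' term controlled by uniform convergence of $H_n\to H$ together with the uniform bound $\overline{\nu_i^{(n)}}(S)\le\overline{\nu_2}(S)$, and a ``change of measure'' term handled through setwise convergence. The only cosmetic differences are that the paper performs the triangle split at the $d_H$ level (invoking Theorem~\ref{T9}.d) before passing to scalars, while you reduce to scalars first via Theorem~\ref{4,3}; and for the measure-change term the paper simply cites an external lemma (\cite[Lemma~4.1]{cgis2021}), whereas you give the explicit argument through Theorem~\ref{3.1}.e and the bound $\sup h_i\cdot\overline{\nu_i-\nu_i^{(n)}}(S)$, which is exactly what Definition~\ref{forte} supplies. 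Your closing paragraph on the tension between Definition~\ref{forte} and the parenthetical ``namely $\lim_n\Gamma_n(A)=\Gamma(A)$'' is a fair observation; the paper resolves it by reading ``setwise convergence'' in the strong sense of Definition~\ref{forte}, just as you do.
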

\begin{proof}
By \ref{varying}.b) we have $\nu_i^{(n)} \leq \nu_i^{(n+1)} \leq \nu_i$ for every $i=1,2$ and for every $n \in \mathbb{N}$,
 moreover
$\lim_{n \to \infty} \overline{\nu}_i^{(n)} (A) = \overline{\nu}_i (A)$ for every $A \in \mathcal{C}$ and $i=1,2$.\\
Since $H_n, H$ are bounded and $\nu_2$ is of finite variation then, by  \cite[Proposition 1]{ccgis} and \cite[Proposition 2]{danilo},  $H_n,\, H \in RL^1_{\Gamma_k}(L(\mathbb{R}_0^+)) \cap RL^1_{\Gamma}(L(\mathbb{R}_0^+))$ for every $n,k \in \mathbb{N}$.
Let $\varepsilon > 0$ be fixed and let $n(\varepsilon)$ be such that $d_H(H_n,H) \leq \varepsilon$ for every $n \geq n(\varepsilon)$.
By \cite[Theorem 3]{danilo},
 for every $n \geq n(\varepsilon)$,
\begin{eqnarray*}
 && d_H \left( {\scriptstyle (RL)}\int_{S} H_n \, \mathrm{d}\Gamma_n,
{\scriptstyle (RL)}\int_{S} H \, \mathrm{d}\Gamma \right) \leq\\
&\leq& d_H \left( {\scriptstyle (RL)}\int_{S} H_n\, \mathrm{d}\Gamma_n,
{\scriptstyle (RL)}\int_{S} H \, \mathrm{d}\Gamma_n \right) +
d_H \left( {\scriptstyle (RL)}\int_{S} H\, \mathrm{d}\Gamma_n,
{\scriptstyle (RL)}\int_{S} H \, \mathrm{d}\Gamma \right) \leq\\
&\leq&
\varepsilon \overline{\Gamma}_n(S)
+
d_H \left( {\scriptstyle (RL)}\int_{S} H\, \mathrm{d}\Gamma_n,
{\scriptstyle (RL)}\int_{S} H \,\mathrm{d}\Gamma \right) \leq \\
&\leq&
\varepsilon \overline{\nu}_2(S) + d_H \left(
\left[ {\scriptstyle (RL)}\int_S h_1 \mathrm{d}\nu_1^{(n)},
{\scriptstyle (RL)} \int_S h_2 \mathrm{d}\nu_2^{(n)}\right],
 \left[ {\scriptstyle (RL)}\int_S h_1 \mathrm{d}\nu_1,
{\scriptstyle (RL)}\int_S h_2 \mathrm{d}\nu_2\right] \right).
\end{eqnarray*}
We have to evaluate
\begin{eqnarray*}
&& d_H \left( \left[
{\scriptstyle (RL)}\int_S h_1 \mathrm{d}\nu_1^{(n)},
{\scriptstyle (RL)}\int_S h_2 \mathrm{d}\nu_2^{(n)}\right],
 \left[
{\scriptstyle (RL)}\int_S h_1 \mathrm{d}\nu_1,
 {\scriptstyle (RL)}\int_S h_2 \mathrm{d}\nu_2\right] \right)=\\
&&=
\max_{i=1,2} \left\{
{\scriptstyle (RL)}\int_S h_i \mathrm{d}\nu_i -
{\scriptstyle (RL)}\int_S h_i \mathrm{d}\nu_i^{(n)} \right\}
\end{eqnarray*}
Using now \cite[Lemma  4.1]{cgis2021}
 the last term tends to 0 for $n \to \infty$ and so
\begin{eqnarray*}
&& \lim_{n \to \infty} d_H \left( {\scriptstyle (RL)}\int_{S} H_n \, \mathrm{d}\Gamma_n,
{\scriptstyle (RL)}\int_{S} H \, \mathrm{d}\Gamma \right)
 = 0.
\end{eqnarray*}

\end{proof}

Analogously to \cite[Remark 3]{danilo},
 Theorem \ref{varying} can be extended  to the bounded sequences $(H_n)_n$  converging $\overline{\Gamma}$-almost uniformly on $S$.
%namely for every
%$\varepsilon > 0$, there exists  $E_{\varepsilon} \in \mathcal{C}$    with
%$\overline{\Gamma} (E_{\varepsilon}) \leq \varepsilon\,  \mbox{  and   }\,
%d_H(H_n, H) \mbox{ converges uniformlyon } S \setminus E_{\varepsilon}.
%$ Moreover
\begin{corollary}\label{ext-varying}
\mbox{\rm (\cite[Corollary 1]{cgis2021})}
Let $(H_{n})_n := ([h_1^{(n)}, h_2^{(n)}])_n$ be a sequence of bounded  multifunctions
and $(\Gamma_{n})_n:=([\nu^{(n)}_{1}, \nu^{(n)}_{2}])_n$,  be a sequence of  multisubmeasures.
Suppose there exist
%an interval-valued
a multisubmeasure $\Gamma = [\nu_1, \nu_2]$ with $\nu_2$
of finite variation and a bounded  multifunction $H=[h_1,h_2] $
such that:
\begin{description}

\item[(\ref{ext-varying}.a)] $H_n \preceq H_{n+1}$ for every $n \in \mathbb{N}$ and  $d_H(H_n,H) \to 0$  $\Gamma$-almost uniformly on $S$,
\item[(\ref{ext-varying}.b)]  $\Gamma_n \preceq \Gamma_{n+1} \preceq \Gamma$, for every $ n\in \mathbb{N}$
 and $(\Gamma_n)$\, setwise converges  to  $\Gamma$.
\end{description}
Then
\[ \lim_{n \to \infty} d_H \Big(
{\scriptstyle (RL)}\int_{S}H_{n} \, \mathrm{d}\Gamma_n ,
{\scriptstyle (RL)}\int_{S} H\, \mathrm{d}\Gamma
\Big) = 0.\]
\end{corollary}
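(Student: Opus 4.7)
The plan is to reduce Corollary \ref{ext-varying} to Theorem \ref{varying} by exploiting the almost uniform structure: I would split $S$ into a ``good'' piece where $(H_n)_n$ converges uniformly to $H$ and a ``small'' piece where the integrals can be controlled crudely by boundedness and variation.

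First, I would unpack the hypothesis $d_H(H_n,H)\to 0$ $\overline{\Gamma}$-almost uniformly: for every $\eta>0$ there exists $B_\eta\in\mathcal{C}$ with $\overline{\Gamma}(B_\eta)<\eta$ such that $d_H(H_n,H)\to 0$ uniformly on $S\setminus B_\eta$. Using Corollary \ref{4.4}, I would decompose
\[
{\scriptstyle (RL)}\int_S H_n\,\mathrm{d}\Gamma_n = {\scriptstyle (RL)}\int_{S\setminus B_\eta} H_n\,\mathrm{d}\Gamma_n \oplus {\scriptstyle (RL)}\int_{B_\eta} H_n\,\mathrm{d}\Gamma_n,
\]
and similarly for $\int_S H\,\mathrm{d}\Gamma$. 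Thanks to the inequality $d_H(A\oplus C,B\oplus D)\leq d_H(A,B)+d_H(C,D)$ from \cite[Proposition 1.17]{hp} used in the proof of Theorem \ref{4.5}, it suffices to bound the two resulting pieces separately.

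Second, on the good part $S\setminus B_\eta$, the multifunctions $H_n\chi_{S\setminus B_\eta}$ and $H\chi_{S\setminus B_\eta}$, together with $\Gamma_n$ and $\Gamma$ restricted to measurable subsets of $S\setminus B_\eta$, satisfy all hypotheses of Theorem \ref{varying}: boundedness of $H_n,H$ is preserved, $H_n\preceq H_{n+1}$ passes to the restriction, $d_H(H_n,H)\to 0$ uniformly on $S\setminus B_\eta$ by construction, and $\Gamma_n\preceq\Gamma_{n+1}\preceq\Gamma$ together with setwise convergence restrict without difficulty (this uses only the evaluation on sets $A\subseteq S\setminus B_\eta$). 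Theorem \ref{varying} then yields
\[
d_H\Big({\scriptstyle (RL)}\int_{S\setminus B_\eta} H_n\,\mathrm{d}\Gamma_n,\;{\scriptstyle (RL)}\int_{S\setminus B_\eta} H\,\mathrm{d}\Gamma\Big)\longrightarrow 0.
\]

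Third, on the small part $B_\eta$, let $M:=\sup_{s\in S}\max\{\|H(s)\|_{\mathcal{H}},\sup_n\|H_n(s)\|_{\mathcal{H}}\}<\infty$. Applying Theorem \ref{T9}.d) with $G=\{0\}$, together with Theorem \ref{upper-bound}.a) transcribed to the interval-valued setting via Theorem \ref{4,3}, each of the two integrals over $B_\eta$ has Hausdorff norm at most $M\cdot\overline{\Gamma}(B_\eta)<M\eta$ (using $\overline{\Gamma}_n\leq\overline{\Gamma}$, which follows from the pointwise bound $\nu_i^{(n)}\leq\nu_i$). Hence the Hausdorff distance between them is at most $2M\eta$. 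Choosing $\eta=\varepsilon/(2M+1)$ and letting $n\to\infty$ gives $\limsup_n d_H\bigl({\scriptstyle (RL)}\int_S H_n\,\mathrm{d}\Gamma_n,{\scriptstyle (RL)}\int_S H\,\mathrm{d}\Gamma\bigr)\leq \varepsilon$, and the arbitrariness of $\varepsilon$ concludes the argument.

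The main obstacle will be the third step: one has to be careful that the crude estimate on $B_\eta$ really applies to both sequences simultaneously, which requires the uniform bound $\overline{\Gamma}_n(B_\eta)\leq\overline{\Gamma}(B_\eta)$ and the uniform bound $M$ on $\|H_n\|_{\mathcal{H}}$. Both are available from the monotonicity hypothesis \ref{ext-varying}.b) and the boundedness assumption in \ref{ext-varying}.a), but they must be invoked explicitly since the $H_n$ are only assumed individually bounded rather than uniformly so; the monotonicity $H_n\preceq H_{n+1}$ and $d_H(H_n,H)\to 0$ $\Gamma$-almost uniformly do however force a uniform bound on $\overline{\Gamma}$-almost all of $S$, which together with the $\eta$-freedom in the choice of $B_\eta$ is enough.
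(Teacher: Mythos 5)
Your proposal is correct and follows essentially the route the paper itself intends: the text gives no detailed argument, only the remark that Theorem \ref{varying} ``can be extended'' to $\overline{\Gamma}$-almost uniform convergence, and your good-set/bad-set decomposition with Theorem \ref{varying} on $S\setminus B_\eta$ and the crude $M\cdot\overline{\Gamma}(B_\eta)$ bound on $B_\eta$ is exactly that extension. The one point worth making fully explicit is the final one you flag: the uniform bound $h_2^{(n)}\leq h_2$ coming from monotonicity holds only off a $\overline{\Gamma}$-null set $N$, so on $B_\eta$ one should first replace $H_n$ by its modification equal to $\{0\}$ on $N$ (legitimate by the a.e.\ invariance of the integral, Theorem \ref{upper-bound}.c transcribed via Theorem \ref{4,3}, since $\overline{\Gamma_n}(N)\leq\overline{\Gamma}(N)=0$) before invoking the sup-times-variation estimate.
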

\begin{remark}
\rm
We can observe that the results of
 Theorem \ref{varying} and Corollary \ref{ext-varying} are still valid if we assume that $\Gamma_{n+1}
 \succeq  \Gamma_n \succeq \Gamma $ for every $n \in \mathbb{N}$, with the additional   hypothesis that
$ \sup_n \,  \overline{\Gamma}_n (S) < +\infty.$\\
Moreover, in Corollary \ref{ext-varying}, if $\Gamma_n = \Gamma = \nu$ in the condition (\ref{ext-varying}.a), then the monotonicity could be omitted.
\end{remark}
%==================0
\begin{comment}
We remind the convergence in measure, namely:
\begin{definition} \rm
It is said that the sequence $(H_{n})$ $\widetilde{\nu}$-converges to $H$
 ($H_{n} \overset{\widetilde{\nu}}{\rightarrow} H $\,) if $d_H(H_{n}, H)\overset{\widetilde{\nu}}\rightarrow 0$, i.e. for
\mg{\tiny \color{red} we need measurability, ma se iso la tilde non serve la C}
every $\delta>0$,
\[\lim\limits_{n\rightarrow \infty} \widetilde{\nu}(E_{n}(\delta))=0, \quad
\mbox{ where } \quad E_{n}(\delta)= \{s\in S: \,\, d_H(H_{n}(s), H(s)) \geq \delta \}\in \mathcal{C}.\]
\end{definition}
\end{comment}
%===================
In particular, in the finitely additive case, we obtain
\begin{theorem}
\mbox{\rm (\cite[Theorem 4.4]{cgis2021})}
Let $\nu:\mathcal{C}\to [0, \infty)$ be finitely additive and of finite variation.
  Let $H=[h_{1}, h_{2}],\, H_{n}=[h_{1}^{(n)}, h_{2}^{(n)}]
:S\to L(\mathbb{R}_{0}^{+})$
%as in (\ref{H})
 be  multifunctions such that
$\sup \{ h_{2}(s), h_{2}^{(n)} (s), s \in S, \, n \in \mathbb{N} \} < +\infty$
and $H_{n}\overset{\widetilde{\nu}}\rightarrow H$.
Then
\[\lim\limits_{n\rightarrow \infty} d_H \Big(
{\scriptstyle (RL)}\int_{S} H_{n}\, \mathrm{d}\nu,
{\scriptstyle (RL)}\int_{S} H\, \mathrm{d}\nu \Big)=0.\]
\end{theorem}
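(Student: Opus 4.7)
The plan is to reduce the interval-valued statement to two scalar convergence statements, and then invoke the scalar convergence theorem from Section 3 (namely the Section~3 result corresponding to \cite[Theorem~6]{cgis2022}, i.e.\ the second item in the combined convergence theorem of that subsection). First, interpreting $\nu$ as the degenerate interval-valued multisubmeasure $\Gamma = [\nu,\nu]$, Theorem~\ref{4,3} yields
\[
{\scriptstyle (RL)}\int_S H_n\, \mathrm{d}\nu = \Bigl[{\scriptstyle (RL)}\int_S h_1^{(n)}\, \mathrm{d}\nu,\, {\scriptstyle (RL)}\int_S h_2^{(n)}\, \mathrm{d}\nu\Bigr],
\]
and likewise for $H$; hence
\[
d_H\Bigl({\scriptstyle (RL)}\int_S H_n\, \mathrm{d}\nu,\, {\scriptstyle (RL)}\int_S H\, \mathrm{d}\nu\Bigr) = \max_{i=1,2}\Bigl|{\scriptstyle (RL)}\int_S h_i^{(n)}\, \mathrm{d}\nu - {\scriptstyle (RL)}\int_S h_i\, \mathrm{d}\nu\Bigr|,
\]
so the claim reduces to showing ${\scriptstyle (RL)}\int_S h_i^{(n)}\, \mathrm{d}\nu \to {\scriptstyle (RL)}\int_S h_i\, \mathrm{d}\nu$ for $i=1,2$.

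Next I would check that both the relevant integrals exist and that the scalar convergence in $\widetilde{\nu}$ holds. Since $\nu$ is finitely additive and nonnegative, Remark~\ref{rem-var} gives $\overline{\nu}=\nu$, so $\nu$ is monotone and of finite variation and $\widetilde{\nu}$ is monotone on $\mathcal{C}$. The hypothesis bounds $h_2$ and $h_2^{(n)}$ uniformly, and since $0\le h_1\le h_2$ and $0\le h_1^{(n)}\le h_2^{(n)}$ the functions $h_1$ and $h_1^{(n)}$ are uniformly bounded too; consequently by Theorem~\ref{upper-bound}.a all four functions $h_i,h_i^{(n)}$ are $|RL|$ $\nu$-integrable. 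For the measure-convergence, using the formula $d_H([r,s],[x,y]) = \max\{|x-r|,|y-s|\}$, we have, for each $\delta>0$,
\[
\{s\in S:\, |h_i^{(n)}(s)-h_i(s)|\geq \delta\} \subseteq \{s\in S:\, d_H(H_n(s),H(s))\geq \delta\},
\]
so monotonicity of $\widetilde{\nu}$ gives $h_i^{(n)} \overset{\widetilde{\nu}}{\to} h_i$ from $H_n \overset{\widetilde{\nu}}{\to} H$.

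Finally, I would apply to each $i\in\{1,2\}$ the scalar convergence result (the second bullet of the Section~3 theorem) which, under the hypotheses ``$h_i,h_i^{(n)}\in|RL|^1_\nu(\mathbb{R})$, uniform boundedness, and $h_i^{(n)}\overset{\widetilde{\nu}}{\to}h_i$'', gives ${\scriptstyle (RL)}\int_S h_i^{(n)}\, \mathrm{d}\nu \to {\scriptstyle (RL)}\int_S h_i\, \mathrm{d}\nu$. Combining with the $\max$ formula for $d_H$ above yields the conclusion.

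The main obstacle is essentially bookkeeping: one must be careful that the ``set-function'' $\nu$ in the statement is genuinely being used as a (degenerate) interval-valued multisubmeasure so that Theorem~\ref{4,3} applies, and that the measurability of the sets $\{|h_i^{(n)}-h_i|\ge\delta\}$ needed to apply the scalar theorem is inherited from that of $\{d_H(H_n,H)\ge\delta\}$; both follow from the inclusion above together with the monotonicity of $\widetilde{\nu}$, so no further regularity of the multifunctions $H_n,H$ is required beyond what the hypotheses already supply.
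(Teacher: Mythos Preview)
Your proposal is correct and follows precisely the reduction pattern used throughout Section~4: apply Theorem~\ref{4,3} with the degenerate multisubmeasure $\Gamma=[\nu,\nu]$ to split the interval-valued integral into its two scalar components, use the inclusion $\{|h_i^{(n)}-h_i|\ge\delta\}\subseteq\{d_H(H_n,H)\ge\delta\}$ together with monotonicity of $\widetilde{\nu}$ to pass the convergence-in-measure hypothesis down to each component, and then invoke the scalar result \cite[Theorem~6]{cgis2022}. The paper itself does not include a proof of this statement (it is only cited from \cite{cgis2021}), but your argument is exactly the intended one and mirrors how the neighboring results (Theorems~\ref{4.13}--\ref{4.15}) are obtained.

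One small clarification regarding your final paragraph: measurability of the sets $\{|h_i^{(n)}-h_i|\ge\delta\}$ is in fact \emph{not} needed to apply the scalar theorem, because $\widetilde{\nu}$ is defined on all of $\mathcal{P}(S)$ and the proof of \cite[Theorem~6]{cgis2022} reproduced in Section~3 only uses $\widetilde{\nu}$ on the (possibly non-measurable) level set before choosing a measurable cover. Your inclusion argument already gives $\widetilde{\nu}(\{|h_i^{(n)}-h_i|\ge\delta\})\to 0$ directly, so the ``obstacle'' you flag does not actually arise.
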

\subsection{Convergence results on atoms}
 Finally, the field of atoms in measure theory has many applications and has been studied by many authors (e.g., \cite{LMP,Pap2,PS}).\\
In order to obtain convergence results on atoms we
suppose $S$ is a locally compact Hausdorff topological space. We denote by $\mathcal{K}$ the lattice of all compact subsets of
$S$, $\mathcal{B}$ the Borel $\sigma$-algebra (i.e. the smallest $\sigma$-algebra containing $\mathcal{K}$) and $\mathcal{O}$
the class of all open sets.
\begin{definition}
\rm
 The set multifunction $\Gamma:\mathcal{B}\to L(\mathbb{R}_{0}^{+})$ is said to be regular if for every set $A\in \mathcal{B}$ and every $\varepsilon>0$
 there exist $K\in \mathcal{K}$ and $D\in \mathcal{O}$ such that $K\subseteq A\subseteq D$ and $\|\Gamma(D\setminus K)\|_{\mathcal{H}}<\varepsilon.$
\end{definition}
We observe that the regularity of  $\Gamma$ is equivalent to the regularity of $\nu_{2}$.
\begin{definition}
It is said that $B\in \mathcal{C}$ is an atom of an interval-valued multifunction $\Gamma:\mathcal{C}\to L(\mathbb{R}_{0}^{+})$ if $\{0\}\preceq \Gamma(B), \{0\}\neq \Gamma(B)$ and for every $C\in \mathcal{C}$, with
$C\subseteq B$, we have $\Gamma(C) = \{0\}$ or $\Gamma(B\setminus C)=\{0\}.$
\end{definition}

\begin{theorem}  \label{T1}
\mbox{\rm (\cite[Theorem 15]{cgis2022})}
Let $\Gamma:\mathcal{B}\to L(\mathbb{R}_{0}^{+})$ be
%an interval-valued
a  regular multisubmeasure
%as in (\ref{M}),
 of finite variation and satisfying property $(\boldsymbol{\sigma})$ and let $H:S\to L(\mathbb{R}_{0}^{+})$ be bounded.
If $B\in \mathcal{B}$ is an atom of \, $\Gamma$, then
${\scriptstyle (RL)}\int_{B}H\, \mathrm{d}\Gamma= H(b)\smallbullet \Gamma(\{b\}),$ where $b\in B$
is the single point resulting by \mbox{\rm  \cite[Corollary 4.7]{LMP}}.
\end{theorem}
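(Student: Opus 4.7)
The plan is to reduce the integral over the atom $B$ to a contribution only from the single distinguished point $b$, the rest being negligible.

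First I would invoke \cite[Corollary 4.7]{LMP}, whose hypotheses (regularity, finite variation, property $(\boldsymbol{\sigma})$) are exactly what the statement assumes, to produce a point $b\in B$ with the property that $\Gamma(\{b\})=\Gamma(B)$ and hence $\Gamma(B\setminus\{b\})=\{0\}$. Because $\Gamma$ is a multisubmeasure it is monotone with respect to $\preceq$, so for every measurable $C\subseteq B\setminus\{b\}$ one has $\Gamma(C)\preceq \Gamma(B\setminus\{b\})=\{0\}$, i.e.\ $\nu_{1}(C)=\nu_{2}(C)=0$. In particular $\overline{\nu}_{2}(B\setminus\{b\})=0$, so $\widetilde{\Gamma}(B\setminus\{b\})=\{0\}$ and $H=\{0\}$ $\Gamma$-almost everywhere on $B\setminus\{b\}$.

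Next I would split $B$ as the disjoint union $\{b\}\cup (B\setminus\{b\})$ and compute the two pieces separately, exploiting Corollary \ref{4.4} and the additivity built into Theorem \ref{op}.a (obtained via Theorem \ref{4.5}.c applied to $H\chi_{\{b\}}$ and $H\chi_{B\setminus\{b\}}$). For the piece on $B\setminus\{b\}$: any countable partition $\{A_{n}\}_{n}$ of $B\setminus\{b\}$ has $\Gamma(A_{n})=\{0\}$ for each $n$, hence $\sigma_{H,\Gamma}(P)=\{0\}$ for every tagged partition; this is the multifunction analogue of Theorem \ref{upper-bound}.b (which also applies directly, since $H$ is bounded, $\nu_{2}$ is of finite variation and $H=\{0\}$ $\Gamma$-a.e.\ on $B\setminus\{b\}$), giving
\[
{\scriptstyle (RL)}\int_{B\setminus\{b\}} H \, \mathrm{d}\Gamma = \{0\}.
\]
For the piece on $\{b\}$ the only countable partition is $\{\{b\}\}$ itself, so for any $\varepsilon>0$ one may take $P_{\varepsilon}=\{\{b\}\}$ in the definition of RL-integrability and obtain $\sigma_{H,\Gamma}(P_{\varepsilon})=H(b)\smallbullet \Gamma(\{b\})$. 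Hence
\[
{\scriptstyle (RL)}\int_{\{b\}} H \, \mathrm{d}\Gamma = H(b)\smallbullet \Gamma(\{b\}).
\]

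Finally I would combine the two evaluations by the finite additivity of the indefinite integral (Theorem \ref{op}.a):
\[
{\scriptstyle (RL)}\int_{B} H \, \mathrm{d}\Gamma = {\scriptstyle (RL)}\int_{\{b\}} H \, \mathrm{d}\Gamma \oplus {\scriptstyle (RL)}\int_{B\setminus\{b\}} H \, \mathrm{d}\Gamma = H(b)\smallbullet \Gamma(\{b\})\oplus\{0\} = H(b)\smallbullet \Gamma(\{b\}),
\]
which is the desired identity. The only delicate point is the very first step, namely quoting \cite[Corollary 4.7]{LMP} correctly and checking that its hypotheses on $\Gamma$ translate unambiguously through the pair $(\nu_{1},\nu_{2})$; once the atom is replaced by its concentration point $b$, everything else reduces to the basic additivity and boundedness results already established for the interval-valued RL integral.
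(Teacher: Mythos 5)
Your argument is correct, but it takes a genuinely different route from the paper's. The paper never decomposes $B$ directly: it reduces to the scalar components via Theorem \ref{4,3} ($h_i$ is $RL$-integrable w.r.t.\ $\nu_i$), passes from the $RL$ integral to the Gould integral on the atom using the comparison result \cite[Theorem 11]{ccgis} from Subsection \ref{sec-comp} (which is where monotonicity, null-additivity and property $(\boldsymbol{\sigma})$ are consumed a second time), and then quotes the known Gould-on-atoms evaluation \cite[Theorem 3]{ccgs16}. You instead stay entirely inside the $RL$ framework: once $b$ is produced, you split $B=\{b\}\cup(B\setminus\{b\})$, observe that every measurable subset of $B\setminus\{b\}$ is $\Gamma$-null so that every Riemann--Lebesgue sum over $B\setminus\{b\}$ is $\{0\}$ (equivalently, the interval-valued version of Theorem \ref{upper-bound}.b applies), evaluate the integral over the singleton from the definition, and glue by the finite additivity of $T_H$ (Theorem \ref{op}.a). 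This is more elementary and self-contained, and it localizes the use of regularity and property $(\boldsymbol{\sigma})$ to the single citation of \cite[Corollary 4.7]{LMP}; the paper's route buys consistency with the Gould integral (an equality of three integrals) at the price of two more external results. The one step you flag but do not carry out is the one the paper spends half its proof on: \cite[Corollary 4.7]{LMP} is a statement about scalar measures, so it yields a point $b_1$ for $\nu_1$ and a point $b_2$ for $\nu_2$, and you must show $b_1=b_2$ before you may write $\Gamma(B\setminus\{b\})=\{0\}$. The paper does this by noting that $b_1\neq b_2$ would give $\{b_1\}\subseteq B\setminus\{b_2\}$, hence $\nu_2(\{b_1\})=0$ and, since $\nu_1\leq\nu_2$, also $\nu_1(\{b_1\})=\nu_1(B)=0$, a contradiction. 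Adding those two lines closes the only gap; everything else in your proposal is sound.
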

\begin{proof} Firstly, we prove  the uniqueness of  $b\in B$.
 Because $\Gamma$ is an interval-valued regular multisubmeasure, then the set functions $\nu_1$ and $\nu_2$
are null-additive an regular too.
 Suppose $B\in \mathcal{B}$ is an atom of $\Gamma$. Then, $B$ is an atom of $\nu_1$ and $\nu_2$.
According to~\cite[Corollary 4.7]{LMP},  for $\nu_{i}, i\in \{1,2\},$ there exists a unique point $b_{i}, i\in \{1,2\}$
such that $\nu_{i}(\{b_{i}\})=\nu_{i}(B)$ and $\nu_{i}(B\setminus \{b_{i}\})=0$, for $i\in \{1,2\}$.
We prove that $b_{1}=b_{2}$. If it is not true
 then $\{b_{1}\}\subset B\setminus\{b_{2}\}$. By the monotonicity of $\nu_2$ we have
 $\nu_2(\{b_{1}\})\leq \nu_2(B\setminus\{b_{2}\})=0$. Since  $\nu_1\leq \nu_2$ then $\nu_1(\{b_{1}\})=0$,
but $\nu_{1}(\{b_{1}\})=\nu_1(B)>0$, and  we have a contradiction. Therefore, there is only one point
$b\in B$ such that $\nu_{i}(\{b\})=m_{i}(B)$ and $\nu_{i}(B\setminus \{b\})=0$, for $i\in \{1,2\}$.

By the $RL_{\Gamma}$-integrability of $H$, then $h_1$ is $RL_{\nu_1}$-integrable and $h_2$ is $RL_{\nu_2}$-integrable.
According to \cite[Theorem 11]{ccgis} and Subsection \ref{sec-comp}, $h_1, h_2$ are Gould integrable in the sense of \cite{G}, and moreover:
$${\scriptstyle (RL_{\nu_1})}\int_{B} h_1  \mathrm{d} \nu_1 =
(G)\int_{B} h_1  \mathrm{d}\nu_1,
\quad {\scriptstyle (RL_{\nu_2})}\int_{B} h_2  \mathrm{d}\nu_2 =
(G)\int_{B}h_2  \mathrm{d}\nu_2,$$
 where $(G)\int_{B} h_1  \mathrm{d}\nu_1$,
 $(G)\int_{B}h_2  \mathrm{d}\nu_2$ are the Gould integrals of $h_1, \, h_2$  respectively.
~Applying now~\cite[Theorem 3]{ccgs16} and \cite[ Remark 5]{cgis2022}, we have
$$(RL_{\Gamma})\int_{B}H  \mathrm{d}\Gamma=
(G)\int_{B}H \mathrm{d}\Gamma=H(b)\Gamma(\{b\}).$$
\end{proof}

\begin{theorem}
\mbox{\rm (\cite[Theorem 16]{cgis2022})}
Let $\Gamma:\mathcal{B}\to L(\mathbb{R}_{0}^{+})$ be
a regular  multisubmeasure  of finite variation and satisfying property $(\boldsymbol{\sigma})$.
 Let $H:S\to L(\mathbb{R}_{0}^{+})$ be bounded
and, for every $n\in \mathbb{N},$ let $H_{n}=[u_{n}, v_{n}]$ be such that $(v_{n})_{n}$ is uniformly bounded.
If $B\in \mathcal{B}$ is an atom of $\Gamma$ and $H_{n}(b)\overset{d_H}\longrightarrow H(b)$, where $b\in B$ is the single point
resulting by Theorem \mbox{\rm \ref{T1}}, then
\[\lim\limits_{n\rightarrow \infty} d_H \Big(
{\scriptstyle (RL)}\int_{B} H_{n}\, \mathrm{d}\Gamma,
{\scriptstyle (RL)}\int_{B} H\, \mathrm{d}\Gamma \Big)=0.\]
\end{theorem}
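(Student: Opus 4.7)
The plan is to reduce the integrals over the atom $B$ to point evaluations at $b$ via Theorem \ref{T1}, and then estimate the Hausdorff distance between two interval products in terms of $d_H(H_n(b),H(b))$ and the (finite) size of $\Gamma(\{b\})$.

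First I would check that all the integrals in the statement are well defined. Since $(v_n)_n$ is uniformly bounded, say $v_n \leq M$ for every $n$, and $0 \leq u_n \leq v_n$, each $H_n$ is bounded. Combined with the fact that $\nu_2$ is of finite variation, Theorem \ref{upper-bound}.a) applied to $u_n,v_n$ together with Theorem \ref{4,3} gives $H_n \in RL^1_{\Gamma}(L(\mathbb{R}_{0}^{+}))$; the same reasoning gives $H \in RL^1_{\Gamma}(L(\mathbb{R}_{0}^{+}))$.

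Next, since $\Gamma$ is a regular multisubmeasure of finite variation satisfying property $(\boldsymbol{\sigma})$ and $B$ is an atom of $\Gamma$, Theorem \ref{T1} applies to both $H$ and each $H_n$, and in particular the distinguished point $b \in B$ is the same in both cases (it only depends on $\Gamma$). Thus
\[
{\scriptstyle (RL)}\int_{B} H \, \mathrm{d}\Gamma = H(b) \smallbullet \Gamma(\{b\}),
\qquad
{\scriptstyle (RL)}\int_{B} H_n \, \mathrm{d}\Gamma = H_n(b) \smallbullet \Gamma(\{b\}).
\]
Writing $\Gamma(\{b\}) = [\nu_1(\{b\}), \nu_2(\{b\})]$, $H(b) = [h_1(b),h_2(b)]$ and $H_n(b) = [u_n(b),v_n(b)]$, the products become $[h_1(b)\nu_1(\{b\}), h_2(b)\nu_2(\{b\})]$ and $[u_n(b)\nu_1(\{b\}), v_n(b)\nu_2(\{b\})]$ respectively.

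The final step is routine: the Hausdorff distance between these two intervals is
\[
\max\bigl\{ \nu_1(\{b\})|u_n(b)-h_1(b)|,\, \nu_2(\{b\})|v_n(b)-h_2(b)| \bigr\}
\leq \nu_2(\{b\}) \cdot d_H(H_n(b),H(b)),
\]
where I have used $\nu_1 \leq \nu_2$ and the explicit formula for $d_H$ on intervals in $\mathbb{R}_0^+$. Since $\nu_2(\{b\}) \leq \overline{\nu}_2(S) < +\infty$ and $d_H(H_n(b),H(b)) \to 0$ by hypothesis, the conclusion follows. There is no real obstacle here: once Theorem \ref{T1} is invoked, the statement reduces to the continuity of the interval product $[\cdot]\smallbullet \Gamma(\{b\})$ at a fixed second factor, which is immediate.
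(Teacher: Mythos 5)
Your proposal is correct and follows essentially the same route as the paper: reduce both integrals over the atom $B$ to the products $H(b)\smallbullet\Gamma(\{b\})$ and $H_n(b)\smallbullet\Gamma(\{b\})$ via Theorem \ref{T1}, then bound the Hausdorff distance of these products by $d_H(H_n(b),H(b))$ times a finite constant. The only cosmetic difference is that the paper spells out by contradiction that the distinguished points for $H_n$ and $H$ coincide, while you (correctly) note that $b$ depends only on $\Gamma$; your explicit interval computation of the final bound is equivalent to the paper's estimate $d_H(H_n(b),H(b))\cdot\overline{\Gamma}(B)$.
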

\begin{proof}
 By Theorem \ref{T1}, there exists a unique point $b\in B$ such that: $$\Gamma(B\setminus \{b\})=\{0\}, \quad
{\scriptstyle (RL)}\int_{B} Hd\Gamma=H(b)\cdot \Gamma(B).$$

Similarly, for every $n\in \mathbb{N}$,
 there is a unique $b_{n}\in B$ such that:
$$\G(B\setminus \{b_{n}\})=\{0\}, \quad
{\scriptstyle (RL)}\int_{B} H_{n}\mathrm{d}\Gamma=H_{n}(b_{n})\cdot \Gamma(B).$$

 If there exists $n_{0}\in \mathbb{N}$
 such that $b_{n_{0}}\neq b$, this means that $\{b_{n_{0}}\}\subset B\setminus \{b\}$, and by the monotonicity
 of $\G$, it follows that: $\G(\{b_{n_{0}}\})\preceq \G(B\setminus \{b\})=\{0\};$ however, this is not possible since $\G(\{b_{n_{0}}\})=\G(B)\neq \{0\}.$
 Therefore, for every $n\in \mathbb{N}$, $b_{n}=b.$
 Then:
 \begin{equation*}
 d_{H} \left({\scriptstyle (RL_{\Gamma})}\int_{B} H_{n}\mathrm{d}\Gamma,{\scriptstyle (RL_{\Gamma})}
\int_{B} H \mathrm{d}\Gamma \right)\leq d_{H} (H_{n}(b),H(b))\cdot\overline{\Gamma}(B)\longrightarrow 0,\quad \mbox{ for } n\rightarrow \infty.
 \end{equation*}
\end{proof}
%=============================================
{\bf Acknowledgement and funding.}
This study was partly funded by the Unione europea - Next Generation EU, Missione 4 Componente C2 - CUP Master: J53D2300390 0006, CUP: J53D23003920 006 - Research project of MUR (Italian Ministry of University and Research) PRIN 2022  “Nonlinear differential problems with applications to real phenomena” (Grant Number: 2022ZXZTN2).
\\
The last  author is member of the ``Gruppo Nazionale per l'Analisi Matematica, la Probabilità e le loro Applicazioni'' (GNAMPA) of the Istituto Nazionale di Alta Matematica (INdAM) and of GRUPPO DI LAVORO UMI - Teoria dell'Approssimazione e Applicazioni - T.A.A.

%=============================================

\Addresses
\end{document}